\newcommand{\arxiv}[1]{\href{http://arxiv.org/pdf/#1}{arXiv:#1}}
\newcommand{\dlpullback}[1][dl]{\save*!/#1-4ex/#1:(-1,1)@^{|-}\restore}
\newcommand{\drpullback}[1][dr]{\save*!/#1-4ex/#1:(-1,1)@^{|-}\restore}
\DeclareMathAlphabet{\mathbbe}{U}{bbold}{m}{n}
\newcommand{\simplexcategory}{\mathbbe{\Delta}}
\newcommand{\Map}{\operatorname{Map}}
\newcommand{\eq}{\mathrm{eq}}
\newcommand{\un}{\underline}
\newcommand{\relfin}{\mathrm{\,rel.fin.}}
\newcommand{\finsup}{\mathrm{\,fin.sup.}}
\newcommand{\bigcat}{\widehat{\kat{Cat}}}
\newcommand{\Vect}{\kat{Vect}}
\newcommand{\vect}{\kat{vect}}
\providecommand{\norm}[1]{\left| {#1}\right|}
\providecommand{\normnorm}[1]{\left|\!\left| {#1}\right|\!\right|}
\newcommand{\pro}[1]{\underleftarrow{#1}}
\newcommand{\ind}[1]{\underrightarrow{#1}}
\providecommand{\kat}[1]{\text{\textbf{\textsl{#1}}}}
\newcommand{\LIN}{\kat{LIN}}
\newcommand{\lin}{\kat{lin}}
\newcommand{\Lin}{\kat{Lin}}
\newcommand{\upperstar}{^{\raisebox{-0.25ex}[0ex][0ex]{\(\ast\)}}}
\newcommand{\lowershriek}{_!}
\newcommand{\isopil}{\stackrel{\raisebox{0.1ex}[0ex][0ex]{\(\sim\)}}%
			{\raisebox{-0.15ex}[0.28ex]{\(\rightarrow\)}}}
\newcommand{\tensor}{\otimes}
\newcommand{\op}{^{\text{{\rm{op}}}}}
\newcommand{\Set}{\kat{Set}}
\newcommand{\Grpd}{\mathcal{S}}
\newcommand{\grpd}{\mathcal{F}}
\newcommand{\Z}{\mathbb{Z}}
\newcommand{\Q}{\mathbb{Q}}
\newcommand{\CC}{\mathcal{C}}
\newcommand{\DD}{\mathcal{D}}
\newcommand{\PP}{\mathcal{P}}
\def\rTo{\longrightarrow}
\newcommand{\name}[1]{\ulcorner #1\urcorner}
\newcommand{\Fun}{\operatorname{Fun}}
\newcommand{\Aut}{\operatorname{Aut}}
\newcommand{\id}{\operatorname{id}}
\newcommand{\colim}{\operatornamewithlimits{colim}}
\newtheorem{lemma}{Lemma}[section]
\newtheorem{prop}[lemma]{Proposition}
\newtheorem{theorem}[lemma]{Theorem}
\newtheorem{cor}[lemma]{Corollary}
\newtheorem{taller}[lemma]{$\!\!$}
\newenvironment{blanko}[1]{\begin{taller}{\normalfont\bfseries #1}\normalfont}{\end{taller}}
\thanks{%
  The first author 
  was partially supported by grants 
  MTM2012-38122-C03-01,  
  2014-SGR-634,          
  MTM2013-42178-P,       
  MTM2015-69135-P, and    
  MTM2016-76453-C2-2-P,  
  the second author 
  by 
  MTM2013-42293-P and 
  MTM2016-80439-P
%
  and the third author   
  by
  MTM2013-42178-P and
  MTM2016-76453-C2-2-P}
\author{Imma G\'alvez-Carrillo}
\address{Departament de Matem\`atiques
      \\
      Universitat Polit\`ecnica de Catalunya
      \\
      Escola d'Enginyeria de Barcelona Est (EEBE)
      \\
      Carrer Eduard Maristany 10-14
      \\
      08019 Barcelona\\Spain
}
\email{m.immaculada.galvez@upc.edu}
\author{Joachim Kock}
\address{Departament de Matem\`atiques
       \\Universitat Aut\`onoma de Barcelona
       \\08193 Bellaterra (Barcelona), Spain}
\email{kock@mat.uab.cat}
\author{Andrew Tonks}
\address{Department of Mathematics\\ 
University of Leicester\\ 
University Road\\ 
Leicester LE1 7RH, UK}
\email{apt12@le.ac.uk}
\title{Homotopy linear algebra}
\date{}    
\begin{document}
\begin{abstract}
  By homotopy linear algebra we mean the study of linear functors between slices
  of the $\infty$-category of $\infty$-groupoids, subject to certain finiteness
  conditions.  After some standard definitions and results, we assemble said
  slices into $\infty$-categories to model the duality between vector spaces and
  profinite-dimensional vector spaces, and set up a global notion of homotopy
  cardinality \`a la Baez--Hoffnung--Walker compatible with this duality.  We
  needed these results to support our work on incidence algebras and M\"obius
  inversion over $\infty$-groupoids; we hope that they can also be of independent
  interest.
\end{abstract}

\subjclass[2010]{18-XX, 55Pxx, 46A20, 15Axx, 37F20}


\maketitle

\small

\tableofcontents

\normalsize

\setcounter{section}{-1}

\addtocontents{toc}{\protect\setcounter{tocdepth}{1}}

\section{Introduction}

\begin{blanko}{Vector spaces and spans (Yoneda and B\'enabou).}
  It has been known since the early days of category 
  theory~\cite{Yoneda}, \cite{Benabou:bicat}
  that the category of sets and spans behaves a lot like the category of vector
  spaces.  Its objects are sets; morphisms from $S$ to $T$ are spans
  $$
  S \leftarrow M \to T ,
  $$
  and composition is given by pullback.
  The pullback formula for composition can in fact be written as a
  matrix multiplication.

  Further vector-space flavour can be brought out by a slight reinterpretation of
  the objects in the span category.  Since the sets $S$ and $T$ 
  index the `columns'
  and `rows' in the `matrix' $M$, they play the role of 
  bases of vector spaces. 
  The vector space is the
  scalar-multiplication-and-sum completion of the basis set $S$. 
  The corresponding categorical construction is the slice category $\Set_{/S}$
  whose objects are maps $V \to S$.  The fibre $V_s$ over an element $s\in S$
  plays the role of the coefficient of the basis element $s$ in a linear
  combination.  The way a span acts on a `vector' $V\to S$ is now just a special
  case of matrix multiplication: by pullback, and then composition.  These two
  operations are the most basic functors between slice categories, and they are
  adjoint: for a given set map $f: S \to T$, pullback along $f$ is denoted
  $f\upperstar $ and composition denoted $f\lowershriek$, and we have the
  adjunction
  $$
  \xymatrix{
  \Set_{/S} \ar[r]<+2mm>^{f\lowershriek}  \ar@{}[r]|\bot
  & \ar[l]<+2mm>^{f\upperstar} \Set_{/T}.
  }
  $$
  The category of sets is locally cartesian closed, hence $f\upperstar$
  preserves all colimits.  In particular it preserves
  `linear combinations', and it is appropriate to call composites of
  lowershrieks and upperstars {\em linear functors}.

  The promised stronger vector-space flavour thus comes from
  considering the category whose objects are
  slice categories and whose morphisms are linear functors.
\end{blanko}

\begin{blanko}{Cardinality.}
  The constructions above work with arbitrary sets, but in order to maintain our
  `linear combination' interpretation of an object $V\to S$ of a slice category
  we must impose certain finiteness conditions.  A linear combination is a
  finite sum of finite scalar multiples of vectors from the (possibly infinite)
  basis; we should thus require $V$ (but not $S$) to be a finite set, and suitable
  finiteness conditions should also be imposed on spans.  These finiteness
  conditions are also needed in order to be able to take cardinality and recover
  results at the level of vector spaces.  The cardinality of $V\to S$ is a
  vector in the vector space spanned by $S$, namely the linear combination
  $\sum_{s\in S}\norm{V_s} \delta_s$, where $\delta_s$ is the basis vector
  indexed by $s$, and $\norm{V_s}$ denotes the usual cardinality of the set
  $V_s$.
\end{blanko}

\begin{blanko}{Objective algebraic combinatorics (Joyal, Lawvere, Lawvere--Menni).}
  Linear algebra with sets and spans is most useful in the coordinatised 
  situation (since the slice categories are born with a `basis'), and in 
  situations where the coefficients are natural numbers.  In practice it is
  therefore mostly algebraic combinatorics that can benefit from {\em objective 
  linear algebra} as we call it, following Lawvere, who has advocated the
  {\em objective method} in combinatorics and number theory.
  
  In a nutshell, algebraic combinatorics is the study of combinatorial objects
  via algebraic structures associated to them.  Most basically these algebraic
  structures are vector spaces.
  Further algebraic structure, such as coalgebra structure, is induced
  from the combinatorics of the objects.  Generating functions and incidence algebras
  are two prime examples of this mechanism.
  While these algebraic techniques are very powerful, for enumerative purposes for example, it is
  also widely acknowledged that bijective proofs represent deeper understanding
  than algebraic identities;  this is one motivation for wishing to objectify
  combinatorics.  A highlight in this respect is Joyal's theory of
  species~\cite{JoyalMR633783}, which reveals the objective
  origin of many operations with power series.  A species is a $\Set$-valued
  functor on the groupoid of finite sets and bijections; the value on an 
  $n$-element set is the objective counterpart to the $n$th coefficient in the
  corresponding exponential generating function.
    
  The present work was motivated by incidence algebras and M\"obius inversion.  A M\"obius
  inversion formula is classically an algebraic identity in the incidence
  algebra (of a locally finite poset, say, or more generally a M\"obius category in
  the sense of Leroux~\cite{Leroux:1975}), hence an equation between two linear
  maps.  So by realising the two linear maps as spans and establishing a
  bijection between the sets representing these spans, a bijective proof can be 
  obtained.   An objective M\"obius inversion principle for M\"obius $1$-categories was
  established by Lawvere and Menni~\cite{LawvereMenniMR2720184}; the 
  $\infty$-version of these results \cite{GKT:DSIAMI-1}, \cite{GKT:DSIAMI-2}, 
  \cite{GKT:MI} required the developments of homotopy linear algebra of the 
  present paper.
\end{blanko}

\begin{blanko}{From sets to groupoids (Baez--Dolan, Baez--Hoffnung--Walker).}
  Baez and Dolan~\cite{Baez-Dolan:finset-feynman} discovered that the theory of
  species can be enhanced by considering groupoid-valued species instead of
  set-valued species.  The reason is that most combinatorial objects have
  symmetries, which are not efficiently handled with $\Set$-coefficients.  Their
  paper~\cite{Baez-Dolan:finset-feynman} illustrated this point by
  showing that
  the exponential generating function corresponding to a species is literally 
  the cardinality of the associated analytic functor, provided this analytic 
  functor is taken with groupoid coefficients rather than set coefficients.
  They also showed
  how the annihilation and creation operators in Fock space can be given
  an objective combinatorial interpretation in this setting.
   
  A subsequent paper by Baez, Hoffnung and
  Walker~\cite{Baez-Hoffnung-Walker:0908.4305} developed in detail the basic
  aspects of linear algebra over groupoids, under the name `groupoidification'.
  One important contribution was to check that the symmetry factors that
  arise behave as expected and cancel out appropriately in the various
  manipulations.  A deeper insight in their paper is to clarify the notion of
  (groupoid) cardinality by deriving all cardinality assignments, one for each
  slice category, from a single global prescription, defined as a functor from
  groupoids and spans to vector spaces.
  This does not work for all groupoids; the ones that admit a cardinality
  are called {\em tame}, a notion akin to square-integrability,  and convergence plays a role. 
  There is a corresponding notion of tame span.
\end{blanko}

\medskip

We now come to the new contributions of the present paper, and as a first
approximation we explain them in terms of three contrasts with 
Baez--Hoffnung--Walker:

\begin{blanko}{From groupoids to $\infty$-groupoids.}
  We work with coefficients in $\infty$-groupoids,
  so as to incorporate more homotopy theory.  The abstraction step from 
  $1$-groupoids to $\infty$-groupoids is actually not so drastic, since the
  theory of $\infty$-categories is now so well developed that one can deal with
  elementary aspects of $\infty$-groupoids with almost the same ease as one
  deals with sets (provided one deals with sets in a good categorical way).
\end{blanko}

\begin{blanko}{Homotopy notions.}  
  In fact, because of the abstract viewpoints forced upon us by the setting of
  $\infty$-groupoids, we are led to some conceptual simplifications,
  valuable
  even when our results are specialised to the $1$-groupoid level.
  The main point is the consistent homotopy approach.  We
  work consistently with homotopy fibres, while in \cite{Baez-Hoffnung-Walker:0908.4305}
  `full' fibres are employed.  We
  exploit homotopy sums, where \cite{Baez-Hoffnung-Walker:0908.4305} spells out 
  the formulae in ordinary sums,
  at the price of carrying around symmetry factors. 
  Just as an ordinary sum is a colimit indexed over a set, a homotopy sum is a 
  colimit indexed over an $\infty$-groupoid.
  The advantage of working with homotopy fibres and homotopy sums is that
  homotopy sums (an example of $f\lowershriek$) are left adjoint to homotopy
  fibres (an example of $f\upperstar$) in exactly the same way as, over sets,
  sums are left adjoint to fibres.  As a consequence, with the correct
  notation, no symmetry factors appear --- they are absorbed into the 
  formalism and take
  care of themselves.  (See \cite{GalvezCarrillo-Kock-Tonks:1207.6404} for
  efficient exploitation of this viewpoint.)
\end{blanko}

\begin{blanko}{Finiteness conditions and duality issues.}
  A more substantial difference to the Baez--Hoffnung--Walker approach concerns
  the finiteness conditions.  Their motivating example of Fock space led them to
  the tameness notion which is self-dual: if a span is tame then so is the
  transposed (or adjoint) span (i.e.~the same span read backwards).  
  One may say that they
  model Hilbert spaces rather than plain vector spaces.

  Our motivating examples are incidence coalgebras and incidence algebras; these
  are naturally vector spaces and profinite-dimensional vector spaces, 
  respectively, 
  and a fundamental fact is the classical duality between vector spaces and 
  profinite-dimensional vector spaces.  Recall that if $V$ is a 
  vector space, the linear dual $V\upperstar$ is naturally a
  profinite-dimensional vector space, and that in turn the
  continuous linear dual of $V\upperstar$ is naturally isomorphic
  to $V$. In the fully coordinatised situation characteristic
  of algebraic combinatorics, $S$ is some set of (isoclasses of)
  combinatorial objects, the vector space spanned by $S$ is
  the set of finite linear combinations of elements in $S$, which
  we denote by $\Q_S$, and the dual can naturally be identified 
  with the space of $\Q$-valued functions, $\Q^S$.  (Some further
  background on this duality is reviewed in \ref{vect-rappels} below.)
  
  The appropriate finiteness condition to express these notions is simply
  homotopy finiteness: an $\infty$-groupoid is called homotopy finite,
  or just {\em finite}, when it has finitely many components, all homotopy
  groups are finite, and there is an upper bound on the dimension of nontrivial
  homotopy groups.  A morphism of $\infty$-groupoids is called finite
  when all its fibres are finite. Letting $\grpd$ denote the $\infty$-category
  of finite $\infty$-groupoids, the role of vector spaces is played by
  finite-$\infty$-groupoid slices $\grpd_{/S}$, while the role of
  profinite-dimensional vector spaces is played by finite-presheaf
  $\infty$-categories $\grpd^S$, where in both cases $S$ is only required to be
  locally finite.  Linear maps are given by spans of {\em finite type}, meaning
  $S \stackrel p\leftarrow M \stackrel q\to T$ in which $p$ is a finite map.
  Prolinear maps are given by spans of {\em profinite type}, where instead $q$
  is a finite map.  We set up two $\infty$-categories: the $\infty$-category
$\ind\lin$ whose objects
  are the slices $\grpd_{/S}$ and whose mapping spaces are $\infty$-groupoids of
  finite-type spans, and the $\infty$-category $\pro\lin$ whose objects are
  finite-presheaf $\infty$-categories $\grpd^S$ and whose mapping spaces are
  $\infty$-groupoids of profinite-type spans;
we show that these are dual.
We introduce a global notion of cardinality such that the classical duality becomes the cardinality of the $\ind\lin$ - $\pro\lin$ duality.
\end{blanko}

\bigskip

We proceed to outline the paper, section by section.

\bigskip

  The finiteness conditions are needed to be able to take homotopy cardinality.
  However, as long as we are working at the objective level, it is not necessary
  to impose the finiteness conditions, and in fact, the theory is simpler
  without them.  Furthermore, the notion of homotopy
  cardinality is not the only notion of size: Euler characteristic and various
  multiplicative cohomology theories are other potential alternatives, and it is
  reasonable to expect that the future will reveal more comprehensive and
  unified notions of size and measures.  For these reasons, we begin
  in Section~\ref{sec:LIN} with `linear algebra' without finiteness conditions.

  Let $\Grpd$ denote the $\infty$-category of $\infty$-groupoids.
  We define formally the $\infty$-category $\LIN$, whose objects are slices
  $\Grpd_{/S}$ and whose morphisms are linear functors.  We show that the
  $\infty$-category $\Grpd_{/S}$ is the homotopy-sum completion of $S$, and interpret
  scalar multiplication and homotopy sums as special cases of the lowershriek operation.  The
  canonical basis is given by the `names', functors $\name x: 1 \to S$.  We show
  that linear functors can be presented canonically as spans.  We exploit
  results already proved by Lurie~\cite{Lurie:HA} to establish that $\LIN$ is symmetric monoidal
  closed.  The tensor product is given by
  $$
  \Grpd_{/S} \tensor \Grpd_{/T} = \Grpd_{/S\times T} .
  $$

 \medskip
 
  In Section~\ref{sec:finite} we start getting into finiteness
  conditions.  An $\infty$-groupoid $X$ is {\em locally finite} if at each base
  point $x$ the homotopy groups $\pi_i (X,x)$ are finite for $i\geq1$ and are
  trivial for $i$ sufficiently large.  It is called {\em finite} if furthermore
  it has only finitely many components.  The cardinality of a finite
  $\infty$-groupoid $X$ is defined as
  $$
  \norm{X} := \sum_{x\in \pi_0 X} \prod_{i>0} \norm{\pi_i(X,x)}^{(-1)^i} .
  $$
  We work out the basic properties of this notion, notably how it interacts with
  pullbacks in special cases.  We check that the $\infty$-category $\grpd$
  of finite $\infty$-groupoids is locally cartesian closed.

  \medskip
  
  In Section~\ref{sec:finconslice} we first recall the duality between vector
  spaces and profinite-dimensional vector spaces, on which the $\ind\lin$ -
  $\pro\lin$ duality is modelled.
  
  The basis $S$ is required to be locally finite, in order to have
  pullback stability of finite $\infty$-groupoids over it, but it is
  essential not to require it to be finite, as the vector spaces we 
  wish to model are not finite dimensional.    To the category of vector spaces
  corresponds the $\infty$-category $\grpd_{/S}$ of finite $\infty$-groupoids over $S$.  To the category of
  profinite-dimensional vector spaces corresponds the $\infty$-category $\grpd^S$ of 
  finite-$\infty$-groupoid-valued presheaves.  We also introduce the variants 
  $\grpd^S_{\finsup}$ of presheaves with finite support, and 
  $\Grpd_{/S}^{\relfin}$ of finite maps to $S$; the latter can be thought of 
  as a space of measures on $S$ (in view of \ref{dualduality}).  These two 
  $\infty$-categories are naturally
  equivalent to the previous pair, but live on the opposite side of the 
  duality we are setting up.
  
  We proceed to assemble these collections of finite slices into the
  following $\infty$-categories.
 
  There is an $\infty$-category $\lin$ whose objects are $\infty$-categories of the form 
  $\grpd_{/\alpha}$ where $\alpha$ is a finite $\infty$-groupoid, and with morphisms
  given by
  finite spans $\alpha \leftarrow \mu \to \beta$.  This $\infty$-category corresponds to
  the category $\vect$ of
  finite-dimensional vector spaces. We need infinite indexing,
  so the following two extensions are introduced, referring to a 
  locally finite $\infty$-groupoid $S$.
  There is an $\infty$-category $\ind\lin$ whose objects are $\infty$-categories of the form
  $\grpd_{/S}$,
  and whose morphisms are
  spans of finite type (i.e.~the left leg has finite fibres).  This $\infty$-category 
  corresponds to the category $\ind\vect$ of general vector spaces
  (allowing infinite-dimensional ones).  
  Finally we have the $\infty$-category $\pro\lin$ whose objects are $\infty$-categories of the form
  $\grpd^S$ with $S$ a locally finite $\infty$-groupoid, and whose morphisms are
  spans of profinite type (i.e.~the right leg has finite fibres).  This $\infty$-category 
  corresponds to the category $\pro\vect$ of profinite-dimensional vector spaces.
  
  \medskip
  
  In order actually to define $\lin$, $\ind\lin$ and $\pro\lin$ 
  as $\infty$-categories, in Section~\ref{sec:weird}
  we take an intermediate step up in the realm of presentable $\infty$-categories
  --- so to speak extending scalars from $\grpd$ to $\Grpd$ --- to be able to 
  leverage our work from Section~\ref{sec:LIN}.
  
  So, within the ambient $\infty$-category $\LIN$ we define the following
  subcategories: the $\infty$-category $\Lin$ with objects of the form
  $\Grpd_{/\alpha}$ and morphisms given by finite spans; the $\infty$-category
  $\ind\Lin$ consisting of $\Grpd_{/S}$ and spans of finite type; and the
  $\infty$-category $\pro\Lin$ consisting of $\Grpd^S$ and spans of profinite
  type.
   
  We characterise profinite spans by the following pleasant `analytic'
  continuity condition (\ref{prop:cont}): 
  
  {\em A linear functor $F: \Grpd^{T}\to \Grpd^{S}$ is given by a profinite span
  if and only if for all $\varepsilon \subset S$ there exists $\delta \subset T$
  and a factorisation
  $$\xymatrix{
      \Grpd^{T} \ar[r]\ar[d]_F &  \Grpd^{\delta} \ar[d]^{F_{\delta}} \\
      \Grpd^{S}\ar[r] &  \Grpd^{\varepsilon}
  }$$
  where $\varepsilon$ and $\delta$ denote finite $\infty$-groupoids, 
  and the horizontal maps are the projections of the canonical pro-structures.
}
     
   The three $\infty$-categories constructed with $\Grpd$ coefficients 
   are in fact equivalent to the three $\infty$-categories with $\grpd$ coefficients
   introduced heuristically.
   
   \medskip
   
   In Section~\ref{sec:duality} we establish that the pairing
   $\grpd_{/S} \times \grpd^S \to \grpd$ is perfect.  In Section~\ref{sec:metacard}
   we prove that upon taking cardinality this yields the
   pairing $\Q_{\pi_0 S} \times \Q^{\pi_0 S} \to \Q$.
  To define the cardinality notions, 
 we follow Baez--Hoffnung--Walker~\cite{Baez-Hoffnung-Walker:0908.4305}
and introduce a 
`meta cardinality' functor, which induces cardinality notions in all slices and 
in all presheaf $\infty$-categories.
In our setting, this amounts to a functor
\begin{eqnarray*}
  \normnorm{ \ } : \ind\lin & \longrightarrow & \Vect  \\
  \grpd_{/S} & \longmapsto & \Q_{\pi_0 S}
\end{eqnarray*}
and a dual functor
\begin{eqnarray*}
  \normnorm{ \ } : \pro\lin  & \longrightarrow & \pro\vect  \\
  \grpd^S & \longmapsto & \Q^{\pi_0 S} .
\end{eqnarray*}
For each fixed $\infty$-groupoid $S$, this gives an individual notion of
cardinality $\norm{ \ } : \grpd_{/S} \to \Q_{\pi_0 S}$ (and dually
$\norm { \ } : \grpd^S \to \Q^{\pi_0 S}$), since vectors are just
linear maps from the ground field.

The vector space $\Q_{\pi_0 S}$ is spanned by the elements $\delta_s :=
\norm{\name s}$.  Dually, the profinite-dimensional vector space $\Q^{\pi_0 S}$
is spanned by the characteristic functions $\delta^t
=
\frac{\norm{h^t}}{\norm{\Omega(S,t)}}$
(the cardinality of the representable functors divided by the
cardinality of the loop space).

\bigskip

\begin{blanko}{Related work.}
  Part of the material developed here may be considered either folklore, or
  straightforward generalisations of well-known results in $1$-category theory,
  or special cases of fancier machinery.

  $(\infty,1)$-categories of spans have been studied by many people in
  different contexts and with different goals, e.g.~Lurie~\cite{Lurie:HA},
  Dyckerhoff--Kapranov~\cite{Dyckerhoff-Kapranov:1212.3563} and
  Barwick~\cite{Barwick:1301.4725}.  Lurie~\cite{Lurie:GoodwillieI} studies
  an $(\infty,2)$ version relevant for the present purposes; Dyckerhoff and 
  Kapranov~\cite{Dyckerhoff-Kapranov:1212.3563} study a different 
  $(\infty,2)$-category of spans; and
  Haugseng~\cite{Haugseng:1409.0837}, motivated by topological field
  theory~\cite{Lurie:0905.0465}, studies an $(\infty,n)$-category of iterated
  spans (which for $n=2$ is different from both the previous).
  
  Finally, the theory of slices and linear functors is subsumed into
  the theory of polynomial functors, where a further right adjoint enters the
  picture, the right adjoint to pullback.  The theory of polynomial functors
  over $\infty$-categories is developed in \cite{Gepner-Kock}; see 
  \cite{Gambino-Kock:0906.4931} for the classical case.
\end{blanko}

\noindent {\bf Note.}
  This paper was originally written as an appendix to \cite{GKT:1404.3202},
  to provide precise statements and proofs of the results
  in homotopy linear algebra needed in the theory of decomposition spaces,
  an $\infty$-groupoid setting for
  incidence algebras and M\"obius inversion.
  That manuscript has now been split into smaller papers
  \cite{GKT:DSIAMI-1},
  \cite{GKT:DSIAMI-2}, \cite{GKT:MI}, \cite{GKT:ex}, \cite{GKT:restriction},
  its appendix becoming the present paper.

\bigskip

\noindent
{\bf Acknowledgements.}
We thank Rune Haugseng for very useful feedback on many points in this paper,
and Andr\'e Joyal for the enormous influence he has had on the larger project
this paper is a part of.

\section{Preliminaries on $\infty$-groupoids and $\infty$-categories}

We work with $\infty$-categories, in the sense of Joyal~\cite{Joyal:CRM} and
Lurie~\cite{Lurie:HTT}. We can get away with working model-independently,
since our undertakings are essentially elementary: our objects of study are the
$\infty$-category of $\infty$-groupoids and its slices, and many of the
arguments (for example concerning pullbacks) can be carried out
almost as if we were working with the category of sets --- with a few homotopy
caveats.

In the implementation of $\infty$-categories as quasi-categories, $\infty$-groupoids
are precisely Kan complexes, and serve as a model for topological spaces up to homotopy.
For example, to each object $x$ in an $\infty$-groupoid $X$, there are
associated homotopy groups $\pi_n(X,x)$ for $n>0$; a map $X \to Y$ of
$\infty$-groupoids is an equivalence if and only if it induces a bijection on
the level of $\pi_0$ and isomorphisms on all homotopy groups, and so on.
As is standard, let $\Grpd$ denote the $\infty$-category of $\infty$-groupoids.

The great insight of Joyal~\cite{Joyal:qCat+Kan} was to fit this into a theory
of $\infty$-categories, in which $\infty$-groupoids play the role that {\em
sets} play in category theory.  For example, for any two objects $x,y$ in an
$\infty$-category $\CC$ there is (instead of a hom set) a mapping space
$\Map_\CC(x,y)$ which is an $\infty$-groupoid.  Universal properties, such as
limits, colimits and adjoints can be expressed as equivalences of mapping
spaces.  Presheaves take values in $\infty$-groupoids, and constitute the
colimit completion.

\begin{blanko}{Slices and Beck--Chevalley.}
  Maps of $\infty$-groupoids with codomain $S$ form the objects of a slice
  $\infty$-category $\Grpd_{/S}$, which behaves very much like a slice category
  in ordinary category theory.  (We should mention here that since we work
  model-independently, when we refer to $\Grpd_{/S}$ we refer to an
  $\infty$-category determined up to equivalence.  In contrast,
  \cite{Joyal:CRM} and \cite{Lurie:HTT} often refer to two different
  specific models in the category of simplicial sets with the Joyal model
  structure, which while of course equivalent, have different
  technical advantages.)

  Pullback along a morphism $f: T \to S$
  defines a functor $f\upperstar :\Grpd_{/S} \to \Grpd_{/T}$.  This functor
  is right adjoint to the functor $f\lowershriek:\Grpd_{/T} \to \Grpd_{/S}$ given by
  post-composing with $f$.  
  The following Beck--Chevalley rule (push-pull formula)
  \cite{Gepner-Kock}
  holds for $\infty$-groupoids: given a pullback square
  $$\xymatrix{
  \cdot \drpullback \ar[r]^f \ar[d]_p & \cdot \ar[d]^q \\
  \cdot \ar[r]_g & \cdot}$$
  there is a canonical equivalence of functors 
  \begin{equation}\label{BC}
  p\lowershriek \circ f\upperstar \simeq g\upperstar \circ q\lowershriek .
 \end{equation}
\end{blanko}

\begin{blanko}{Defining $\infty$-categories and sub-$\infty$-categories.}
  In this work we are concerned in particular with defining certain 
  $\infty$-categories, 
  a task often different in nature than that of defining ordinary categories: while
  in ordinary category theory one can define a category by saying what the
  objects and the arrows are (and how they compose), this from-scratch approach
  is more difficult for $\infty$-categories, as one would have to specify the
  simplices in all dimensions and verify the filler conditions (that is,
  describe the $\infty$-category as a quasi-category).  In practice,
  $\infty$-categories are constructed from existing ones by general
  constructions that automatically guarantee that the result is again an
  $\infty$-category, although the construction typically uses universal
  properties in such a way that the resulting $\infty$-category is only defined
  up to equivalence.  To specify a sub-$\infty$-category of an $\infty$-category
  $\CC$, it suffices to specify a subcategory of the homotopy category of $\CC$
  (i.e.~the category whose hom sets are $\pi_0$ of the mapping spaces of $\CC$),
  and then pull back along the components functor.  What this amounts to in
  practice is to specify the objects (closed under equivalences) and specifying
  for each pair of objects $x,y$ a full sub-$\infty$-groupoid of the
  mapping space $\Map_\CC(x,y)$, also closed under equivalences, and closed 
  under composition.
  
  We will use the terms subcategory and subgroupoid rather than the more clumsy 
  `sub-$\infty$-category' and `sub-$\infty$-groupoid'.
\end{blanko}

\begin{blanko}{Fundamental equivalence.}\label{fund-eq}
  Recall that $\Grpd$ is the $\infty$-category of $\infty$-groupoids.
  Fundamental to many constructions and arguments in this work
  is the canonical equivalence
$$
\Grpd_{/S} \simeq \Grpd^S
$$
which is the homotopy version of the equivalence $\Set_{/S} \simeq \Set^S$ (for 
$S$ a set),
expressing the two ways of encoding a family of sets $\{X_s \mid s\in S\}$: 
either regarding the 
members of the family as the fibres of a map $X \to S$, or as a parametrisation
of sets $S \to \Set$.
To an object $X \to S$ one associates the functor $S\op\to\Grpd$ sending
$s\in S$ to the $\infty$-groupoid $X_s$.  The other direction is the Grothendieck construction,
which works as follows:
to any presheaf $F:S\op \to \Grpd$, which sits over the terminal presheaf $*$,
one associates the object $\colim (F)\to\colim(*)$.
It remains to observe that $\colim (*)$ is equivalent to $S$ itself.
More formally, the Grothendieck construction equivalence is a consequence of
a finer result, namely Lurie's straightening theorem
(\cite[Theorem 2.1.2.2]{Lurie:HTT}), as has also been observed in
\cite[Remark 2.6]{ABGHR}.
Lurie constructs 
a Quillen equivalence between the category of right
fibrations over $S$ and the category of (strict) simplicial presheaves on
$\mathfrak{C}[S]$.  
Combining this result with the fact that simplicial presheaves on $\mathfrak{C}[S]$
is a model for the functor $\infty$-category $\Fun(S\op,\Grpd)$ (see \cite{Lurie:HTT}, 
Proposition 5.1.1.1), the Grothendieck 
construction equivalence follows.
\end{blanko}

\section{Homotopy linear algebra without finiteness conditions}
\label{sec:LIN}

In this section we work over $\Grpd$, the 
$\infty$-category of $\infty$-groupoids. 

\begin{blanko}{Scalar multiplication and homotopy sums.}\label{scalar&hosum}
  The  `lowershriek' operation
$$
f\lowershriek:\Grpd_{/I}\to\Grpd_{/J}
$$
along a map $f:I\to J$ has two special cases, which play the role of 
  scalar multiplication (tensoring with an $\infty$-groupoid)
  and vector addition (homotopy sums):
  
The $\infty$-category $\Grpd_{/I}$
is {\em tensored} over $\Grpd$.
Given 
  $g:X \to I$ in $\Grpd_{/I}$
  then for any $S \in \Grpd$ we consider the projection
  $p_S:S\times X\to X$ in $\Grpd_{/X}$ 
  and put
$$
S\otimes g\;\;:=\;\; g\lowershriek (p_S):S\times X\to I
\text{ in }\Grpd_{/I}. 
$$
It also has {\em homotopy sums}, by which we mean colimits 
indexed by an $\infty$-groupoid.
The colimit of a functor $F: B \to \Grpd_{/I}$ is a special case of the 
lowershriek.  Namely, the functor $F$ corresponds by adjunction to an object
$g: X \to B \times I$ in $\Grpd_{/B\times I}$, and we have
$$
\colim(F) = p\lowershriek(g)
$$
where $p: B\times I \to I$ is the projection.
We interpret this as the homotopy sum of the family $g:X\to B\times I$
with members $$g_b:X_b\longrightarrow \{b\}\times I=I,$$
and we denote the homotopy sum by an integral sign:
\begin{equation}\label{fibsum}
\int^{b\in B}g_b \ := \ p\lowershriek g \;\text{ in }\Grpd_{/I}.
\end{equation}
(The use of an integral sign, with superscript, is standard 
notation for colimits that arise as coends~\cite{MacLane:categories}.)
\end{blanko}

\begin{blanko}{Example.}
  With $I=1$, 
this
  gives the important formula
  $$
  \int^{b\in B} X_b = X ,
  $$
  expressing the total space of $X\to B$ as the homotopy sum of its fibres.
\end{blanko}

Using the above, we can define the $B$-indexed {\em linear
combination} of a family of vectors $g:X\to B\times I$ and scalars $f:
S \to B$, 
$$
\int^{b\in B} S_b \tensor g_b = 
p\lowershriek ( g \lowershriek (f'))
\;: S\times_B X\to I
\;\text{ in }\Grpd_{/I},
$$
as illustrated in the first row of the following diagram
\begin{equation}\label{lincomb}
  \vcenter{
      \xymatrix@R0.9pc{ 
            S\times_B X\drpullback  \ar[r]^-{f'
            } \ar[dd] &X \ar[dd]_{qg}\ar[rd]|(0.45){\:\!g\,\!}\ar^-{pg}[rr]
            & & I\\
            & &B\times I\!\!\!\!\!\!\!
            \ar[ru]^(0.38){p\!\!\!\!}\ar[ld]_(0.57){q\!\!\!}\\
            S \ar^{f
            }[r]& B 
      }
  }
\end{equation}
Note that the members of the family $g\lowershriek(f'
)$  are  just $(g\lowershriek(f'
))_b=S_b\otimes g_b$. 

\begin{blanko}{Basis.}
  In $\Grpd_{/S}$, the names $\name s : 1 \to S$ play the role of a basis.
  Every object $X\to S$ can be written uniquely as a linear combination of basis
  elements; or, by allowing repetition of the basis elements instead of scalar 
  multiplication, as a homotopy sum of basis elements:
\end{blanko}

\begin{lemma}\label{BasisRep}
  For any $f:S\to B$  in $\Grpd_{/B}$  we have
  $$
  f = \int^{s\in S} \name{f(s) } = \int^{b\in B} S_b\otimes \name{b} .
  $$
\end{lemma}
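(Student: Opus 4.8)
The plan is to reduce both equalities to the defining formula \eqref{fibsum} for homotopy sums as a pushforward, together with the basic fact recorded in the preceding Example, that the total space of a map is the homotopy sum of its fibres. First I would treat the second equality. By the definition of linear combination in \eqref{lincomb}, applied with $I = B$, the family of vectors the names $\name{b}: 1 \to B$ (so $X = B$ with $g = \id_B$, and $g_b = \name b$) and the scalars given by $f : S \to B$ itself, the expression $\int^{b \in B} S_b \otimes \name{b}$ unwinds to $p\lowershriek(g\lowershriek(f'))$ for the relevant pullback square; but since $g = \id_B$ here, the diagram \eqref{lincomb} degenerates and this is simply $f$ post-composed with identities, i.e.\ $f$ itself. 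So the second equality is essentially a matter of carefully specialising \eqref{lincomb}.

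Next I would establish the first equality, $f = \int^{s \in S} \name{f(s)}$. Here the integrand is the $S$-indexed family whose member at $s \in S$ is the vector $\name{f(s)} : 1 \to B$ in $\Grpd_{/B}$. Packaging this family as required by \eqref{fibsum}, it corresponds to the object $X \to S \times B$ where $X$ is the total space of the constant family of points $1$ over $S$ --- that is, $X = S$ --- mapping to $S \times B$ by $(\id_S, f)$. Then $\int^{s \in S}\name{f(s)} = p\lowershriek$ of this object, where $p : S \times B \to B$ is the projection. Composing $(\id_S, f) : S \to S\times B$ with $p$ gives exactly $f : S \to B$, so $p\lowershriek$ of the object $(\id_S,f)$ is $f$. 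Again this is just an unwinding of the definitions.

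The only place where anything needs to be checked rather than unwound is the identification of the family: I would spell out, using the adjunction between $F : B \to \Grpd_{/I}$ and objects of $\Grpd_{/B\times I}$ described in \ref{scalar&hosum}, why the family $s \mapsto \name{f(s)}$ really does correspond to the object $(\id_S, f) : S \to S \times B$, and similarly why $b \mapsto S_b \otimes \name b$ corresponds to the object appearing in \eqref{lincomb}. This is the step most likely to hide a subtlety, since it involves tracing a point-wise description through the Grothendieck-construction equivalence of \ref{fund-eq}; but given that equivalence it is routine. Finally I would remark that the two expressions are consistent with each other via the Example formula $\int^{b} S_b = S$ applied fibrewise, which provides a useful sanity check but is not logically needed once both equalities are proved directly.
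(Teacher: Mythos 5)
Your proof is correct and is essentially the paper's own argument: the first equality is \eqref{fibsum} applied to the family $(\id,f)\colon S\to S\times B$ with members $\name{f(s)}$, and the second is \eqref{lincomb} applied to the family of names with scalars $f$ itself. The only cosmetic point is that this family of names is the diagonal $(\id,\id)\colon B\to B\times B$ rather than $\id_B$, but your observation that the relevant composites ($pg$ and $qg$ in the notation of \eqref{lincomb}) are identities is exactly what makes the diagram collapse to $f$.
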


\begin{proof}
  The first equality is an example of
  the definition of homotopy sum \eqref{fibsum},
  applied to the family $S \stackrel{(\id, f)} 
  \longrightarrow S \times B$ with 
  members $\name {f(s)}:1=S_{s}\to \{s\}\times B=B$. 
  For the final expression, consider the family
  $g:B \stackrel{(\id, \id)}
  \longrightarrow B \times B$ with members the names $\name b$,
  and the scalars given by $f:S\to B$ itself.
  Then calculating the linear combination  $\int^{b\in B} S_b\otimes \name{b}$
  by \eqref{lincomb} gives just $f$,
  since $pg$ and $qg$ are the identity. 
\end{proof}

The name $\name b : 1 \to B$ corresponds under the
Grothendieck construction to the representable functor
\begin{eqnarray*}
  B & \longrightarrow & \Grpd  \\
  x & \longmapsto & \Map(b,x) .
\end{eqnarray*}
Thus, interpreted in the presheaf category $\Grpd^B$, the
Lemma is the standard result expressing any presheaf as a colimit of representables.

\begin{prop}\label{HoSum}
  $\Grpd_{/S}$ is the homotopy-sum completion of $S$.
  Precisely, for $\CC$ an $\infty$-category admitting
  homotopy sums, precomposition with the Yoneda embedding
  $S \to \Grpd_{/S}$ induces an equivalence of $\infty$-categories
  $$
  \Fun^{\int}(\Grpd_{/S},\CC) \isopil \Fun(S,\CC) ,
  $$
  where the functor category on the left consists of homotopy-sum preserving functors.
\end{prop}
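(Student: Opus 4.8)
The plan is to recognise the statement as the universal property of the free cocompletion of $S$ under homotopy sums, and to deduce it from the equivalence $\Grpd_{/S}\simeq\Grpd^S$ of \ref{fund-eq} together with Lurie's theory of colimit completions. Write $\mathcal{K}$ for the class of (small) $\infty$-groupoids, so that a $\mathcal{K}$-indexed colimit is exactly a homotopy sum and $\Fun^{\int}=\Fun^{\mathcal{K}}$. Lurie's construction (\cite[\S5.3.6]{Lurie:HTT}) produces a free $\mathcal{K}$-cocompletion $j\colon S\to\PP_{\mathcal{K}}(S)$, realised as the smallest full subcategory of the presheaf $\infty$-category $\PP(S)=\Grpd^S$ that contains the representables and is closed under homotopy sums (computed in $\PP(S)$); it carries exactly the asserted universal property, namely that precomposition with $j$ is an equivalence $\Fun^{\mathcal{K}}(\PP_{\mathcal{K}}(S),\CC)\isopil\Fun(S,\CC)$ for every $\CC$ that admits homotopy sums. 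So the proposition will follow once I identify $\PP_{\mathcal{K}}(S)$ with the whole of $\Grpd^S$, with $j$ corresponding to the Yoneda embedding $S\to\Grpd_{/S}$ under \ref{fund-eq}.

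The crux is therefore the equality $\PP_{\mathcal{K}}(S)=\Grpd^S$, and this is precisely where it matters that $S$ is an $\infty$-groupoid rather than a general $\infty$-category. Under \ref{fund-eq} an object of $\Grpd_{/S}$ is a map $f\colon X\to S$ out of an $\infty$-groupoid, and Lemma~\ref{BasisRep} exhibits it as the homotopy sum $\int^{s\in S}\name{f(s)}$ of representables --- a colimit indexed by the $\infty$-groupoid $X$, hence a $\mathcal{K}$-indexed one. So every object of $\Grpd^S$ is a homotopy sum of representables, and therefore any full subcategory that contains the representables and is closed under homotopy sums is already all of $\Grpd^S$; in particular $\PP_{\mathcal{K}}(S)\simeq\Grpd^S\simeq\Grpd_{/S}$. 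Combined with the previous paragraph, this proves the proposition.

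A more self-contained route, bypassing \S5.3.6, is to construct the inverse functor directly. Given $\phi\colon S\to\CC$, set $\Phi(f\colon X\to S):=\int^{s\in S}\phi(f(s))=\colim\bigl(X\xrightarrow{f}S\xrightarrow{\phi}\CC\bigr)$; this is the left Kan extension of $\phi$ along the Yoneda embedding, and it exists because the defining diagram is indexed by the $\infty$-groupoid $X$, so the colimit is a homotopy sum. Since Yoneda is fully faithful, $\Phi$ restricts back to $\phi$ along $S\to\Grpd_{/S}$ (\cite[4.3.2.15]{Lurie:HTT}). That $\Phi$ preserves homotopy sums is an interchange-of-colimits argument built on the identity $X=\int^{b\in B}X_b$ expressing a total space as the homotopy sum of its fibres: for a family $g\colon Y\to B\times S$ one has $\Phi\bigl(\int^{b}g_b\bigr)=\colim\bigl(Y\to S\xrightarrow{\phi}\CC\bigr)$ and $\int^{b}\Phi(g_b)=\colim_{b\in B}\colim\bigl(Y_b\to S\xrightarrow{\phi}\CC\bigr)$, and these agree because $Y=\int^{b\in B}Y_b$. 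Conversely, if $\Psi\colon\Grpd_{/S}\to\CC$ preserves homotopy sums, then Lemma~\ref{BasisRep} gives $\Psi(f)=\Psi\bigl(\int^{s}\name{f(s)}\bigr)=\int^{s}(\Psi\circ y)(f(s))$, so $\Psi$ is recovered from its restriction by the recipe above; hence $\phi\mapsto\Phi$ and precomposition with the Yoneda embedding are mutually inverse.

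In either route the substance is coherence rather than any one computation. In the first it is the standard but genuinely nontrivial fact --- exactly what \S5.3.6 supplies --- that the abstractly defined $\PP_{\mathcal{K}}(S)$ is modelled by the homotopy-sum closure of the representables inside $\PP(S)$, with colimits matching up. In the second it is upgrading the pointwise assertions ``every presheaf is a homotopy sum of representables'' and ``$\Psi$ preserves that sum'' to a coherent equivalence of functors, i.e.\ promoting Lemma~\ref{BasisRep} and the interchange argument to honest naturality. I would take the first route.
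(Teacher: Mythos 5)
Your proposal is correct. Your second, ``self-contained'' route is in fact essentially the paper's own argument: the paper also produces the inverse as the left Kan extension along the Yoneda embedding, citing \cite[Lemma 4.3.2.13]{Lurie:HTT} for its existence --- which applies precisely because $S$ is an $\infty$-groupoid, so the relevant comma categories are $\infty$-groupoids and the colimits required are homotopy sums --- and \cite[Lemma 5.1.5.5]{Lurie:HTT} for colimit-preservation, thereby delegating to Lurie exactly the coherence issues you flag at the end. Your preferred first route is a genuinely different packaging: you invoke the free $\mathcal{K}$-cocompletion $\PP_{\mathcal{K}}(S)$ of \cite[\S 5.3.6]{Lurie:HTT}, whose universal property is verbatim the statement to be proved, and reduce everything to the single identification $\PP_{\mathcal{K}}(S)=\Grpd^S$, which Lemma~\ref{BasisRep} supplies since the canonical colimit diagram presenting a presheaf on an $\infty$-groupoid is indexed by an $\infty$-groupoid. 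The paper uses Lemma~\ref{BasisRep} at a different joint: to argue that a functor out of $\Grpd_{/S}$ preserves homotopy sums if and only if it preserves all colimits, i.e.\ that $\Fun^{\colim}(\Grpd_{/S},\CC)\to\Fun^{\int}(\Grpd_{/S},\CC)$ is an equivalence, and then adapts the proof of the full colimit-completion theorem \cite[Theorem 5.1.5.6]{Lurie:HTT}. Your route buys a cleaner division of labour (no need to justify that homotopy-sum preservation already forces colimit preservation, a step the paper passes over rather quickly) at the cost of invoking the heavier \S 5.3.6 machinery; the paper's route stays closer to the familiar ``presheaves are the colimit completion'' and makes more visible where the hypothesis that $S$ is an $\infty$-groupoid, rather than an $\infty$-category, actually enters. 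Both arguments ultimately rest on the same two facts: every object of $\Grpd_{/S}$ is a homotopy sum of names, and those homotopy sums are indexed by $\infty$-groupoids.
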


\begin{proof}
  Since every object in $\Grpd_{/S}$ can be written
  as a homotopy sum of names, to preserve
  homotopy sums is equivalent to preserving all colimits, so the natural
  inclusion $\Fun^{\colim}(\Grpd_{/S},\CC) \to \Fun^{\int}(\Grpd_{/S},\CC)$
  is an equivalence.  It is therefore enough to establish the equivalence
  $$\Fun^{\colim}(\Grpd_{/S},\CC) \isopil \Fun(S,\CC).$$
  In the case where $\CC$ is cocomplete, this is true
  since
  $\Grpd_{/S} \simeq \Fun(S\op,\Grpd)$ is the colimit completion of $S$.  The
  proof of this statement (Lurie~\cite{Lurie:HTT}, Theorem~5.1.5.6) goes as
  follows: it is enough to prove that left Kan extension of any functor $S \to
  \CC$ along the Yoneda embedding exists and preserves colimits.  Existence 
  follows from \cite[Lemma~4.3.2.13]{Lurie:HTT}
  since $\CC$ is assumed cocomplete, and the fact
  that left Kan extensions preserve colimits \cite[Lemma~5.1.5.5 (1)]{Lurie:HTT}
  is independent of the cocompleteness of $\CC$.  In our case $\CC$ is not
  assumed to be cocomplete but only to admit homotopy sums.  But since $S$ is
  just an $\infty$-groupoid in our case, 
  this is enough to apply Lemma~4.3.2.13 of \cite{Lurie:HTT} 
  to guarantee the existence of the left Kan extension.
\end{proof}

\begin{blanko}{Linear functors.}\label{linearfunctorsfromspans}
  A span 
$$
I \stackrel p \leftarrow M \stackrel q \to J
$$
defines a {\em linear functor} 
 \begin{equation}\label{linfunctor}
 \Grpd_{/I} \stackrel {p^{\upperstar} } \rTo 
 \Grpd_{/M} \stackrel{q\lowershriek} \rTo \Grpd_{/J} .
 \end{equation}
\end{blanko}

\begin{lemma}\label{linearity}
  Linear functors preserve linear combinations,
$$L\left(\int^{b\in B} S_b\otimes g_b\right)\;=\;\int^{b\in B} S_b\otimes L(g_b).$$
\end{lemma}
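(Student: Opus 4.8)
The plan is to reduce the statement to the two elementary cases that make up a linear functor, namely $p\upperstar$ and $q\lowershriek$, and then to exploit the two operations out of which a linear combination is built: scalar multiplication (a special case of lowershriek, by \ref{scalar\&hosum}) and homotopy sums (also a special case of lowershriek, by \eqref{fibsum}). Since every linear combination $\int^{b\in B} S_b \otimes g_b$ is assembled from these via the push-pull diagram \eqref{lincomb}, it suffices to check that each of $p\upperstar$ and $q\lowershriek$ commutes with each of these building blocks.

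First I would dispatch $q\lowershriek$: lowershrieks compose (post-composition is associative), so $q\lowershriek$ commutes with the homotopy-sum lowershriek $p\lowershriek$ on the nose, and it commutes with scalar multiplication $S\otimes(-)$ because $S\otimes g = g\lowershriek(p_S)$ is again a lowershriek and, crucially, $(S\otimes(-))$ is computed by a pullback-then-lowershriek that the Beck--Chevalley rule \eqref{BC} lets one slide past any other lowershriek. Concretely, forming $S\otimes(q\lowershriek g)$ versus $q\lowershriek(S\otimes g)$ differ only by the pullback square expressing $S\times(-)$, and \eqref{BC} identifies them. Next I would handle $p\upperstar$: because $\Grpd$ is locally cartesian closed, $p\upperstar$ is a left adjoint (it has the further right adjoint $p_*$), hence preserves all colimits, in particular homotopy sums; and it preserves scalar multiplication $S\otimes(-)$ since $p\upperstar(S\times X \to X)$ is again $S\times(\text{something})$, this being exactly a stability-of-products-under-pullback statement, another instance of Beck--Chevalley. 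Assembling: given a linear combination presented as in \eqref{lincomb}, applying $L = q\lowershriek\circ p\upperstar$ and pushing the functor past the two structural operations step by step yields the linear combination of the $L(g_b)$ with the same scalars.

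The one point that needs genuine care — and which I expect to be the main obstacle — is bookkeeping the coherence of all the pullback squares: when one writes $L\bigl(\int^{b} S_b\otimes g_b\bigr)$ and expands $L$, one produces a large diagram of iterated pullbacks, and one must verify that the canonical Beck--Chevalley equivalences \eqref{BC} applied at each square compose to the expected equivalence, rather than merely that the objects agree up to (unnatural) equivalence. In the $1$-categorical world this is automatic; here one should either invoke that the relevant squares are genuine pullback squares (so the mates are canonical and compose by the standard calculus of mates), or simply observe that everything in sight is built from $p\upperstar$ and $p\lowershriek$ and that preservation of colimits by left adjoints is a statement about the adjunction, not about chosen models. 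Given Proposition~\ref{HoSum}, there is in fact a slick alternative: a linear combination is a particular colimit, $L$ restricted to the domain is a left adjoint composed with a left adjoint hence preserves colimits, and homotopy sums suffice to detect all colimits over $\Grpd_{/B}$ — so one may phrase the whole proof as "linear functors preserve colimits, and $\int^{b} S_b\otimes g_b$ is a colimit." I would present the Beck--Chevalley argument as the primary route and mention the adjoint-functor shortcut as a remark.
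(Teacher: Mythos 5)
Your proposal is correct and is essentially an expanded version of the paper's own (one-line) proof: the paper likewise observes that linear combinations are colimits built from lowershriek operations and that the Beck--Chevalley rule \eqref{BC} lets the upperstar slide past them, while lowershrieks simply compose. The extra detail you give (treating $p\upperstar$ and $q\lowershriek$ separately against each of the two building blocks) and the adjoint-functor shortcut you mention are both consistent with the paper's argument.
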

\begin{proof}
  This follows from the Beck--Chevalley rule \eqref{BC}, since linear
  combinations (that is, scalar multiplication and homotopy sums) are colimits
  defined using lowershriek operations (see \ref{scalar&hosum}).
\end{proof}

\begin{blanko}{Matrices.}
  Coming back to the span
$$I\stackrel p\longleftarrow M \stackrel q\longrightarrow J$$
and the linear functor
$$
q\lowershriek p\upperstar
:\Grpd_{/I}\longrightarrow\Grpd_{/J},
$$
consider an element $\name i:1\to I$. Then we have, by Lemma~\ref{BasisRep},
$$
q\lowershriek p\upperstar\name i
=(M_i\to J)=
\int^{j\in J}{M_{i,j}}\otimes \name j
$$
$$
\xymatrix{&M_i\drto\dlto\\1\drto_{\name i}&&\dlto^pM\drto_q\\&I&&J}
$$
For  a more general element $f:X\to I$ we have $f=\int^iX_i\otimes \name{i}$ and so by homotopy linearity \ref{linearity}
\begin{align*}
q\lowershriek p\upperstar f&=
\int^{i,j}X_i\otimes M_{i,j}\otimes \name j.
\end{align*}
\end{blanko}

\begin{blanko}{The symmetric monoidal closed $\infty$-category $\kat{Pr}^{\mathrm L}$.}
  There is an $\infty$-category $\kat{Pr}^{\mathrm L}$, defined and studied in
  \cite[Section 5.5.3]{Lurie:HTT}, whose objects are the presentable
  $\infty$-categories, and whose morphisms are the left adjoint functors, or
  equivalently colimit-preserving functors.  The $\infty$-category
  $\kat{Pr}^{\mathrm L}$ has an  `internal hom' (see \cite[5.5.3.8]{Lurie:HTT}): if
  $\CC$ and $\DD$ are presentable $\infty$-categories, $\Fun^{\mathrm
  L}(\CC,\DD)$, defined as the full subcategory of $\Fun(\CC,\DD)$ spanned by
  the colimit-preserving functors, is again presentable.  The mapping spaces in 
  $\kat{Pr}^{\mathrm L}$ are
  $\Map_{\kat{Pr}^{\mathrm L}}(\CC,\DD) = \Fun^{\mathrm L}(\CC,\DD)^\eq$.
  Finally, $\kat{Pr}^{\mathrm L}$ has a canonical symmetric monoidal structure,
  left adjoint to the closed structure.  See Lurie~\cite{Lurie:HA}, subsection
  4.8.1, and in particular 4.8.1.14 and 4.8.1.17.  The tensor product can be
  characterised as universal recipient of functors in two variables that
  preserve colimits in each variable, and we have an evaluation functor
  $$
  \CC \tensor \Fun^{\mathrm L}(\CC,\DD) \to \DD
  $$
  which exhibits $\Fun^{\mathrm L}(\CC,\DD)$ as an exponential of $\DD$ by 
  $\CC$.
  
  This tensor product has an easy description in the case of presheaf 
  categories (cf.~\cite[4.8.1.12]{Lurie:HA}): if $\CC= \PP(\CC_0)$ and $\DD= \PP(\DD_0)$ for small
  $\infty$-categories $\CC_0$ and $\DD_0$, then we have
  \begin{equation}\label{eq:tensor-times}
    \PP(\CC_0) \tensor \PP(\DD_0) \simeq \PP(\CC_0 \times \DD_0)  .
  \end{equation}
\end{blanko}

\begin{blanko}{The $\infty$-category $\LIN$.}\label{internalLin}
We define $\LIN$ to be the full
  subcategory of $\kat{Pr}^{\mathrm L}$ spanned by 
  the slices $\Grpd_{/S}$, for $S$ a locally finite $\infty$-groupoid.
  We call the functors {\em linear}.  The mapping spaces in $\LIN$ are
\begin{eqnarray*}
  \LIN(\Grpd_{/I}, \Grpd_{/J}) &=& \Fun^{\mathrm L}(\Grpd_{/I}, \Grpd_{/J})^\eq \\
  & \simeq & \Fun^{\mathrm L}(\Grpd^I, \Grpd^J)^\eq \\
  & \simeq & \Fun(I,\Grpd^J)^\eq \\
  & \simeq & (\Grpd^{I\times J})^\eq \\
  & \simeq & (\Grpd_{/I\times J} )^\eq.
\end{eqnarray*}
This shows in particular that the linear functors are given by spans.
Concretely, tracing through the chain of equivalences, a span defines a
left adjoint functor as described above in \ref{linearfunctorsfromspans}.
Composition in $\LIN$ is given by composing spans, i.e.~taking a
pullback.  This amounts to the Beck--Chevalley condition.

The $\infty$-category $\LIN$ inherits a symmetric monoidal closed structure 
from $\kat{Pr}^{\mathrm L}$.  For the `internal hom':
\begin{eqnarray*}
  \un\LIN(\Grpd_{/I}, \Grpd_{/J}) &:=& \Fun^{\mathrm L}(\Grpd_{/I}, \Grpd_{/J}) \\
  & \simeq & \Fun(I, \Grpd^{J}) \\
  &\simeq & \Fun( I \times J, \Grpd) \\
  &\simeq & \Grpd_{/I\times J} .
\end{eqnarray*}

Also the tensor product restricts, and we have the convenient formula
$$
\Grpd_{/I} \tensor \Grpd_{/J} = \Grpd_{/I\times J}
$$
with neutral object $\Grpd$.
This follows from formula~\eqref{eq:tensor-times} combined with the
fundamental equivalence $\Grpd_{/S} \simeq \Grpd^S$.

Clearly we have
$$
\LIN( \Grpd_{/I} \tensor \Grpd_{/J}, \Grpd_{/K}) \simeq \LIN( \Grpd_{/I}, 
\un\LIN(\Grpd_{/J},\Grpd_{/K})) 
$$
as both spaces are naturally equivalent to $(\Grpd_{/I\times J \times 
K})^\eq$.
\end{blanko}

\begin{blanko}{The linear dual.}\label{Lineardual}
`Homming' into the neutral object defines a contravariant autoequivalence
of $\LIN$:
\begin{eqnarray*}
  \LIN & \longrightarrow & \LIN\op  \\
  \Grpd_{/S} & \longmapsto & \un\LIN(\Grpd_{/S}, \Grpd) \simeq \Grpd_{/S}
  \simeq \Grpd^{S} .
\end{eqnarray*}
  Here there right-hand side should be considered the dual of $\Grpd_{/S}$.
  (Since our vector spaces are fully coordinatised, the difference between a
  vector space and its dual is easily blurred.  We will see a clearer difference
  when we come to the finiteness conditions, in which situation the dual of a `vector
  space' $\grpd_{/S}$ is $\grpd^S$ which should rather be thought of as a
  profinite-dimensional vector space.)
  
  For a span $S \stackrel p \leftarrow M \stackrel q \to T$ defining a
  linear functor $F := q\lowershriek \circ p\upperstar :\Grpd_{/S}\to
  \Grpd_{/T}$, the same span read backwards defines the {\em dual functor} $F^\vee :=
  p\lowershriek \circ q\upperstar : \Grpd^T \to \Grpd^S$.  Under the
  fundamental equivalence, this can also be considered a linear functor
  $F^t :\Grpd_{/T} \to \Grpd_{/S}$, called the {\em transpose} of
  $F$.
\end{blanko}

\begin{blanko}{Remark.}
  It is clear that there is actually an $(\infty,2)$-category in play here,
  with the $\un\LIN(\Grpd_{/S}, \Grpd_{/T})$ as hom $\infty$-categories.
  This can be described as a Rezk-category object in the `distributor'
  $\kat{Cat}$, following the work of Barwick and Lurie~\cite{Lurie:GoodwillieI}.
  Explicitly, let
  $\Lambda_k$ denote the full subcategory of $\Delta_k\times\Delta_k$
  consisting of the pairs $(i,j)$ with $i+j\leq k$.  These are the shapes of
  diagrams
  of $k$ composable spans.  They form a cosimplicial category.  
  Define $\operatorname{Sp}_k$ to be the full subcategory of
  $\Fun(\Lambda_k,\Grpd)$ consisting of those diagrams $S : \Lambda_k \to \Grpd$
  for which for all $i'< i$ and $j' < j$ (with  $i+j \leq k$)
  the square 
  $$\xymatrix{
     S_{i',j'} \drpullback \ar[r]\ar[d] & S_{i,j'} \ar[d] \\
     S_{i',j} \ar[r] & S_{i,j}
  }$$
  is a pullback.
  Then we claim that
  \begin{eqnarray*}
    \simplexcategory\op  & \longrightarrow & \kat{Cat}  \\
    {}[k] & \longmapsto & \operatorname{Sp}_k 
  \end{eqnarray*}
  defines a Rezk-category object in $\kat{Cat}$ corresponding to $\un\LIN$.
  We leave the claim unproved, as the result is not necessary for our purposes.
\end{blanko}

\section{Cardinality of finite $\infty$-groupoids}
\label{sec:finite}

\begin{blanko}{Finite $\infty$-groupoids.}\label{finite}
  An $\infty$-groupoid $X$ is called {\em locally finite} if at each base point
  $x$ the homotopy groups $\pi_i (X,x)$ are finite for $i\geq1$ and are trivial
  for $i$ sufficiently large.  An $\infty$-groupoid is called {\em finite} if it
  is locally finite and has finitely many components. 
  An example of a non locally finite $\infty$-groupoid is $B\Z$.

  Let $\grpd\subset \Grpd$ be the full subcategory spanned by the finite 
  $\infty$-groupoids.  For $S$ any $\infty$-groupoid, let $\grpd_{/S}$ be 
  the `comma $\infty$-category' defined by the
  following pullback diagram of $\infty$-categories:
    $$\xymatrix{
       \grpd_{/S} 
  \ar[r]\ar[d]\drpullback &       \Grpd_{/S} 
  \ar[d] \\
       \grpd \ar[r] & \Grpd .
    }$$
\end{blanko}

\begin{blanko}{Cardinality.} \cite{Baez-Dolan:finset-feynman}
  The {\em (homotopy) cardinality} of a finite $\infty$-groupoid $X$
is the nonnegative  rational number given by the formula
  $$
  \norm{X} := \sum_{x\in \pi_0 X} \prod_{i>0} \norm{\pi_i(X,x)}^{(-1)^i} .
  $$
  Here the norm signs on the right refer to order of homotopy groups.
\end{blanko}

If $X$ is a $1$-groupoid, that is, an $\infty$-groupoid
having trivial homotopy groups $\pi_i(X)=0$ for $i>1$, its cardinality is
  $$
  \norm{X} = \sum_{x\in \pi_0 X} \frac1{\norm{\Aut_X(x)}} .
  $$

  The notion and basic properties of homotopy cardinality have been around
for a long time. 
See in particular Baez--Dolan~\cite{Baez-Dolan:finset-feynman} and also 
To\"en~\cite{Toen:0501343}. 
The first printed reference we know of is Quinn~\cite[p.340]{Quinn:TQFT}.

\begin{lemma}\label{sumcard}
  A finite sum of finite $\infty$-groupoids is again finite, and
  cardinality is compatible with finite sums:
  $$
  \norm{\sum_{i=1}^n X_i}= \sum_{i=1}^n \norm{X_i} .
  $$
\end{lemma}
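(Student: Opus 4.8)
The plan is to reduce the statement to the two extreme cases and the behaviour of cardinality under the relevant operations. A finite sum $\sum_{i=1}^n X_i$ is the coproduct in $\Grpd$, which as an $\infty$-groupoid has $\pi_0$ equal to the disjoint union $\coprod_i \pi_0 X_i$, and for a point $x$ lying in the component coming from $X_i$ the homotopy groups $\pi_j(\sum_i X_i, x)$ agree with $\pi_j(X_i, x)$, since a coproduct of Kan complexes is again a Kan complex with no new paths between the summands. Finiteness of the sum is then immediate: each summand has finitely many components with all homotopy groups finite and bounded in dimension, and a finite union of such data has the same properties.

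First I would make precise the identification of $\pi_0$ and the homotopy groups at each basepoint of a finite coproduct; this is the only genuinely structural input, and it is elementary for Kan complexes (or can be cited as the fact that $\pi_0$ preserves coproducts and that the inclusion $X_i \hookrightarrow \sum_i X_i$ is a monomorphism inducing isomorphisms on all $\pi_j$ at every basepoint). Next I would simply substitute into the defining formula
$$
\normnorm{\textstyle\sum_{i=1}^n X_i} = \sum_{x \in \pi_0(\sum_i X_i)} \prod_{j>0} \norm{\pi_j(\textstyle\sum_i X_i, x)}^{(-1)^j},
$$
split the outer sum according to which summand the index $x$ comes from, and use the identification of homotopy groups to recognise the inner sum over the $x$ from $X_i$ as exactly $\norm{X_i}$. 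This yields $\sum_{i=1}^n \norm{X_i}$ directly.

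By induction it suffices to treat $n = 0$ and $n = 2$: the empty sum is the initial $\infty$-groupoid, which has empty $\pi_0$, so its cardinality is the empty sum, namely $0$; and the binary case is the computation above with two summands. The remaining cases follow by associativity of coproduct and an easy induction on $n$.

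I do not expect any real obstacle here: the content is entirely the bookkeeping identity $\pi_0(X \sqcup Y) = \pi_0 X \sqcup \pi_0 Y$ together with the locality of homotopy groups. The one point worth stating carefully, rather than a true difficulty, is that the formula for $\norm{\ }$ is a sum indexed by components and hence visibly additive over a partition of the set of components — so the proof is really just unwinding the definition. Accordingly I would keep the argument to a couple of sentences, proving the binary case and invoking induction.
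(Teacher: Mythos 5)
Your argument is correct and is exactly the unwinding of the definition that the paper has in mind — the paper simply states ``This is clear from the definition,'' and your bookkeeping with $\pi_0$ of a coproduct and the locality of homotopy groups is the content behind that remark. No discrepancy to report.
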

This is clear from the definition.

\begin{lemma}\label{FEB} Suppose $B$ is connected.
  Given a fibre sequence
  $$\xymatrix{
     F \ar[r]\ar[d]\drpullback & E \ar[d] \\
     1 \ar[r] & B ,
  }$$
  if two of the three spaces are finite then so is the third, and in that case
$$\norm E\;=\;\norm F\,\norm B.$$
\end{lemma}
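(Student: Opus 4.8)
The plan is to reduce everything to the long exact sequence of homotopy groups associated to the fibre sequence $F \to E \to B$ (with $B$ connected), together with Lemma~\ref{sumcard}. First I would fix a base point $e \in E$, lying over the base point $b \in B$ and giving the base point of $F$; since $B$ is connected, the homotopy type of $F$ is independent of this choice up to equivalence, so no generality is lost. The long exact sequence
$$
\cdots \to \pi_{i+1}(B,b) \to \pi_i(F,e) \to \pi_i(E,e) \to \pi_i(B,b) \to \cdots \to \pi_0(E) \to \pi_0(B) = *
$$
immediately gives the three-out-of-two finiteness statement: each $\pi_i$ sits in a short exact sequence of groups (for $i \geq 2$; for $i = 1$ one has exactness of pointed sets but it still controls cardinalities via the action on $\pi_0 F$) whose outer terms come from two of the three spaces, and a group is finite iff it fits in a short exact sequence with finite outer terms. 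Likewise the top boundary part of the sequence bounds the number of components of $E$ in terms of $\pi_1(B)$, $\pi_0(F)$, and $\pi_0(B)$; and triviality in high degrees transfers because the long exact sequence forces $\pi_i(E)$ to be trapped between eventually-trivial groups. So if two of $F, E, B$ are (locally) finite, the third is too.

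For the multiplicativity $\norm{E} = \norm{F}\,\norm{B}$, the cleanest route is to handle the connected case first and then bootstrap. If $E$ is connected as well, then $\pi_0 E = \pi_0 F = *$, and the long exact sequence breaks into short exact sequences $1 \to \pi_{i+1}(B) \to ? $ ... more precisely, using that orders are multiplicative in short exact sequences of finite groups, one reads off from
$$
1 \to \operatorname{coker}\bigl(\pi_{i+1}E \to \pi_{i+1}B\bigr) \to \pi_i F \to \ker\bigl(\pi_i E \to \pi_i B\bigr) \to 1
$$
that the alternating product $\prod_{i>0}\norm{\pi_i F}^{(-1)^i} \cdot \prod_{i>0}\norm{\pi_i B}^{(-1)^i}$ telescopes to $\prod_{i>0}\norm{\pi_i E}^{(-1)^i}$, because each $\norm{\pi_i B}$ appears once from the $B$-factor and once (split across two consecutive indices, with opposite-then-same sign bookkeeping) from the images and cokernels in the sequence — all the intermediate kernel/cokernel orders cancel in pairs. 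This is the computational heart and I would present it as a careful index-chasing telescoping identity. For the general (non-connected $B$) case, note $B = \sum_{b \in \pi_0 B} B_b$ where $B_b$ is the connected component; pulling back $E \to B$ along each inclusion $B_b \hookrightarrow B$ gives $E_b \to B_b$ with the same fibre $F$ (here again connectedness of each $B_b$ and the hypothesis that $B$ is connected in the lemma statement means we are actually already done — but if one wants the slightly more general packaging one writes $E = \sum_b E_b$), apply Lemma~\ref{sumcard} to get $\norm{E} = \sum_b \norm{E_b} = \sum_b \norm{F}\norm{B_b} = \norm{F}\sum_b \norm{B_b} = \norm{F}\norm{B}$.

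The main obstacle is the telescoping bookkeeping in the connected case: one must be careful that the long exact sequence is a sequence of \emph{groups} only from degree $2$ upward, while at degrees $0$ and $1$ it is merely an exact sequence of pointed sets with a group action, so the naive "alternating product of orders" manipulation needs to be justified at the low end. Since in the lemma $B$ is assumed connected (so $\pi_0 B = *$), the only real subtlety is the $\pi_1$ layer and its action on $\pi_0 F$; the exactness $\pi_1 E \to \pi_1 B \to \pi_0 F \to \pi_0 E \to *$ together with the fact that $\pi_1 B$ acts on $\pi_0 F$ with the orbit-stabiliser count matching $\norm{\pi_0 E}\cdot(\text{image of }\pi_1 B \text{ in permutations})$ is exactly what makes the bottom of the telescope close up; I would isolate this as a small sub-lemma (orbit counting for the $\pi_1 B$-action on $\pi_0 F$) rather than trying to fold it into the general telescoping step.
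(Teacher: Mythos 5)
Your proposal is correct and follows exactly the route the paper takes: the paper's entire proof is the one-line observation that the lemma follows from the homotopy long exact sequence of the fibre sequence. Your write-up simply fills in the telescoping of alternating orders and the low-degree orbit-counting subtleties that the paper leaves implicit.
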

\begin{proof}
  This follows from the homotopy long exact sequence of a fibre sequence.
\end{proof}

For $b\in B$, we denote by $B_{[b]}$ the connected component of $B$ containing
$b$.  Thus an $\infty$-groupoid $B$ is locally finite if and only if each connected
component $B_{[b]}$ is finite.
  
\begin{lemma}\label{lem:supp}
  Suppose $B$ locally finite.  Given a map $E\to B$, then $E$ is finite if and
  only if all fibres $E_b$ are finite, and are nonempty for only finitely many 
  $b\in\pi_0B$.  In this situation,
  $$\norm E\;=\;\sum_{b\in\pi_0(B)}\norm{E_b}\,\norm{B_{[b]}}.$$
\end{lemma}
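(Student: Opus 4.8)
The statement decomposes $E$ as a homotopy sum of its fibres indexed over the connected components of $B$, so the strategy is to reduce to the connected case already handled by Lemma~\ref{FEB} together with the additivity of Lemma~\ref{sumcard}. First I would write $B = \sum_{b\in\pi_0 B} B_{[b]}$ and correspondingly, using the formula $\int^{b\in B} E_b = E$ from the Example after \eqref{fibsum}, decompose $E \simeq \sum_{b\in\pi_0 B} E|_{B_{[b]}}$, where $E|_{B_{[b]}}$ denotes the restriction of $E\to B$ over the component $B_{[b]}$ (i.e.\ the pullback along $B_{[b]}\hookrightarrow B$). This is just the observation that pullback along the inclusion of a component is the same as taking the part of $E$ lying over that component, and that $E$ is the disjoint union of these parts.

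Next I would treat each summand $E|_{B_{[b]}} \to B_{[b]}$ separately. Since $B_{[b]}$ is connected and finite (by the characterisation of local finiteness recalled just before the lemma), Lemma~\ref{FEB} applies to the fibre sequence $E_b \to E|_{B_{[b]}} \to B_{[b]}$: the total space $E|_{B_{[b]}}$ is finite if and only if the fibre $E_b$ is finite, and in that case $\norm{E|_{B_{[b]}}} = \norm{E_b}\,\norm{B_{[b]}}$. Then by Lemma~\ref{sumcard}, the sum $\sum_{b\in\pi_0 B} E|_{B_{[b]}}$ is finite precisely when each summand is finite (so each $E_b$ is finite) and only finitely many summands are nonempty (so $E_b$ is empty for all but finitely many $b\in\pi_0 B$), and in that case cardinality is additive:
$$
\norm E = \sum_{b\in\pi_0 B}\norm{E|_{B_{[b]}}} = \sum_{b\in\pi_0 B}\norm{E_b}\,\norm{B_{[b]}}.
$$
This gives both the finiteness criterion and the formula at once.

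The only slightly delicate point — the main thing to get right rather than the main obstacle — is the bookkeeping in the ``only if'' direction: one must check that finiteness of $E$ forces \emph{each} fibre to be finite and forces all but finitely many fibres to be empty. The first follows because $E_b$ is itself a fibre of $E$ and a fibre of a finite $\infty$-groupoid over a point is finite (all its homotopy groups inject suitably, or more simply $E_b$ is a summand-after-pullback of something finite and Lemma~\ref{FEB} runs backwards once we know $B_{[b]}$ is nonempty and finite). The second is immediate from $\pi_0 E \twoheadrightarrow$ (image in $\pi_0 B$) having finite source: the set of $b$ with $E_b\neq\emptyset$ is the image of $\pi_0 E\to\pi_0 B$, hence finite. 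I would phrase the whole argument in one short paragraph, since each ingredient is already in place.
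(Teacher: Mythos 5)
Your argument is correct and is essentially the paper's own proof: decompose $E$ as the sum of its full fibres $E_{[b]}$ over the components of $B$, apply Lemma~\ref{FEB} to the fibrations $E_b\to E_{[b]}\to B_{[b]}$, and conclude by the additivity of Lemma~\ref{sumcard}. The extra care you take with the ``only if'' direction is a welcome elaboration but not a departure from the paper's route.
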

\begin{proof}
  Write $E$ as the sum of the full fibres $E_{[b]}$,
  and apply 
  Lemma~\ref{FEB} to the fibrations
  $E_b\to E_{[b]}\to B_{[b]}$  for each $b\in\pi_0(B)$. Finally sum 
  (\ref{sumcard}) over those $b\in \pi_0 B$ with 
  non-empty $E_b$.
\end{proof}

\begin{cor}
  Cardinality preserves (finite) products.
\end{cor}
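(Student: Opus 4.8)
The plan is to read this off directly from Lemma~\ref{lem:supp}, applied to a product projection. Given finite $\infty$-groupoids $X$ and $Y$, I would consider the projection $\pi\colon X\times Y\to Y$. Since $Y$ is finite it is in particular locally finite, so Lemma~\ref{lem:supp} is available once one observes two routine facts: the homotopy fibre of $\pi$ over any point $y$ is $X$ itself, which is finite; and this fibre is nonempty for each of the (finitely many) classes $y\in\pi_0 Y$. Applying the lemma then gives simultaneously that $X\times Y$ is finite and that
$$
\norm{X\times Y}\;=\;\sum_{y\in\pi_0 Y}\norm{X}\,\norm{Y_{[y]}}\;=\;\norm{X}\sum_{y\in\pi_0 Y}\norm{Y_{[y]}} .
$$

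It then remains only to recognise the last sum as $\norm{Y}$. This is immediate from the defining formula for homotopy cardinality (a sum over $\pi_0$), or equivalently from Lemma~\ref{sumcard} applied to the decomposition $Y\simeq\sum_{y\in\pi_0 Y}Y_{[y]}$ of $Y$ into its connected components, each contributing $\norm{Y_{[y]}}$. For a product of finitely many finite $\infty$-groupoids $X_1\times\cdots\times X_n$, the general statement follows by induction on $n$, the base case $n=0$ being the computation $\norm{1}=1$ for the terminal $\infty$-groupoid.

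I do not anticipate a genuine obstacle here: the only steps needing a moment's care are the identification of the homotopy fibre of $X\times Y\to Y$ with $X$ and the verification that the two finiteness hypotheses of Lemma~\ref{lem:supp} hold, both of which are straightforward. (As an alternative not relying on Lemma~\ref{lem:supp}, one could argue directly from $\pi_i(X\times Y,(x,y))\cong\pi_i(X,x)\times\pi_i(Y,y)$ and multiplicativity of the order of finite groups, but the route above is the natural one in the present development since it exhibits the corollary as a genuine consequence of the preceding lemma.)
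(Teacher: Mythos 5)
Your argument is exactly the paper's: the stated proof is simply ``Apply Lemma~\ref{lem:supp} to a projection,'' and your write-up fills in precisely the details that proof leaves implicit (identifying the fibres of $X\times Y\to Y$ with $X$ and recognising $\sum_{y\in\pi_0 Y}\norm{Y_{[y]}}=\norm{Y}$). Correct, and the same route.
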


\begin{proof}
  Apply the Lemma~\ref{lem:supp} to a projection.
\end{proof}

\begin{blanko}{Notation.}
  Given any $\infty$-groupoid $B$ and a function $q: \pi_0 B \to 
\Q$, we write
$$
\int^{b\in B}q_b\;:=\; \sum_{b\in \pi_0 B} q_b\,\norm{B_{[b]}} 
$$ 
if the sum is finite. Then the previous lemma says
$$
\norm E\;=\;\int^{b\in B}\norm{E_b}
$$
for any finite $\infty$-groupoid $E$ and a map $E\to B$.
Two important special cases are given by fibre products and loop spaces:
\end{blanko}

\begin{lemma}\label{finite-products}
  In the situation of a pullback
  $$
\xymatrix{
X\times_B Y \ar[r] \drpullback \ar[d] & X\times Y  \ar[d] \\
B \ar[r]_-{\text{diag}} & B\times B,}$$
  if $X$ and $Y$ are finite, and $B$ is locally finite,
then $X\times_B Y$ is finite and 
$$\norm{X\times_B Y}\;=\;\int^{b\in B}\norm{X_b}\norm{Y_b}.$$
\end{lemma}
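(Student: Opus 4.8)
The plan is to reduce the statement to Lemma~\ref{lem:supp} by identifying the fibres of the structure map $X\times_B Y\to B$. Here $X$ and $Y$ are implicitly objects of $\Grpd_{/B}$; write $X_b$, $Y_b$ for the fibres of $X\to B$ and $Y\to B$ over $\name b\colon 1\to B$. The first thing I would do is compute the fibre of $X\times_B Y\to B$ over such a point. The defining square exhibits $X\times_B Y$ as the pullback $B\times_{B\times B}(X\times Y)$ along the diagonal; pulling back further along $\name b$ and using the pasting law for pullback squares (equivalently, Beck--Chevalley~\eqref{BC}), and noting that the composite $1\to B\xrightarrow{\text{diag}}B\times B$ is the point $(\name b,\name b)$, one gets
$$
(X\times_B Y)_b\;\simeq\;1\times_{B\times B}(X\times Y)\;\simeq\;X_b\times Y_b .
$$

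Next I would invoke Lemma~\ref{lem:supp} twice. Since $X$ and $Y$ are finite and $B$ is locally finite, that lemma applied to $X\to B$ and to $Y\to B$ shows each $X_b$ and each $Y_b$ is finite, and is nonempty for only finitely many $b\in\pi_0 B$. Hence $X_b\times Y_b$ is finite, being a product of two finite $\infty$-groupoids, and is nonempty only at the finitely many $b$ where both $X_b$ and $Y_b$ are. Applying Lemma~\ref{lem:supp} once more, now to $X\times_B Y\to B$ with the fibres just identified, yields that $X\times_B Y$ is finite together with
$$
\norm{X\times_B Y}\;=\;\sum_{b\in\pi_0 B}\norm{X_b\times Y_b}\,\norm{B_{[b]}}\;=\;\sum_{b\in\pi_0 B}\norm{X_b}\,\norm{Y_b}\,\norm{B_{[b]}} ,
$$
where the second equality uses that cardinality preserves finite products (the Corollary after Lemma~\ref{lem:supp}); by the definition of the integral notation this last sum is $\int^{b\in B}\norm{X_b}\norm{Y_b}$, as required.

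I do not expect any serious obstacle: the only point needing a little care is the fibre computation, i.e.\ checking that the diagonal pullback really does compute the fibrewise product $X_b\times Y_b$ and not something else. If one preferred to sidestep the explicit pasting argument, one could instead transport everything across the fundamental equivalence $\Grpd_{/B}\simeq\Grpd^B$, under which $X\times_B Y$ corresponds to the objectwise product of the presheaves $b\mapsto X_b$ and $b\mapsto Y_b$, making the fibre description immediate.
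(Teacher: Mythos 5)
Your proof is correct and matches the paper's intent: the paper gives no explicit proof, presenting the lemma as one of the "two important special cases" of Lemma~\ref{lem:supp} via the integral notation, which is exactly the reduction you carry out (identify the fibre of $X\times_B Y\to B$ over $b$ as $X_b\times Y_b$ by pasting of pullbacks, then apply Lemma~\ref{lem:supp} in both directions together with multiplicativity of cardinality on products). Your spelled-out version, including the check that the fibres are nonempty for only finitely many $b$, is a faithful expansion of the argument the paper leaves implicit.
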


\begin{prop}\label{prop:closedunderfinlims}
  The $\infty$-category $\grpd$ of finite $\infty$-groupoids
  is closed under finite limits.
\end{prop}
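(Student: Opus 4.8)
The plan is to reduce the claim to two facts: that $\grpd$ has a terminal object and is closed under pullbacks, since every finite limit can be built from these. The terminal object is the point $1$, which is certainly finite, so the whole content is the pullback case. Given finite $\infty$-groupoids $X$, $Y$ and maps $X\to B$, $Y\to B$, I want to show the pullback $X\times_B Y$ (formed in $\Grpd$) is again finite; closure under equivalences is automatic from the definition of $\grpd$ as a full subcategory.

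The first step is to observe that the codomain $B$ here need not be finite, but it is automatically \emph{locally finite} on the image of $X$ (indeed $B$ can be replaced by the union of the components hit by $X\to B$, and each such component $B_{[b]}$ receives a map from a fibre of the finite $\infty$-groupoid $X$, hence by Lemma~\ref{FEB} applied to $X_b\to X_{[b]}\to B_{[b]}$ — noting $X_{[b]}$ is finite and $X_b$ is a fibre of a map between finite $\infty$-groupoids, hence finite — we get that $B_{[b]}$ is finite). So without loss of generality $B$ is locally finite. Now I am exactly in the situation of Lemma~\ref{finite-products}: the pullback $X\times_B Y$ sits over $B\times B$ via $X\times Y$ pulled back along the diagonal, and that lemma directly asserts that $X\times_B Y$ is finite (and even computes its cardinality). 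That settles the pullback case.

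The main obstacle — and really the only subtlety — is the reduction in the previous paragraph: one must not assume $B$ is finite, and one must check that the relevant part of $B$ inherits local finiteness from the maps out of $X$ and $Y$. The key input is Lemma~\ref{FEB}, which lets us transport finiteness across a fibre sequence provided two of the three terms are finite; here the total space $X_{[b]}$ and the fibre $X_b$ are finite, forcing the base $B_{[b]}$ to be finite. Once that is in hand, the argument is a one-line citation of Lemma~\ref{finite-products} together with the standard fact that a category with a terminal object and pullbacks has all finite limits.
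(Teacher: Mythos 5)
Your core argument --- a terminal object plus pullbacks gives all finite limits, and the pullback case is Lemma~\ref{finite-products} --- is exactly the paper's proof. The problem is the paragraph you single out as ``the main obstacle'': the reduction to a locally finite base is both unnecessary and, as written, incorrect. It is unnecessary because the proposition concerns limits of finite diagrams \emph{in} $\grpd$: in the cospan $X\to B\leftarrow Y$ the cospan vertex $B$ is itself an object of the diagram, hence finite by hypothesis, so it is in particular locally finite and Lemma~\ref{finite-products} applies immediately. There is no case with non-finite $B$ to handle.

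It is incorrect because the stronger statement you are implicitly proving --- that a pullback of finite $\infty$-groupoids over an \emph{arbitrary} base is finite --- is false: take $X=Y=1$ and $B=B\Z$; then $X\times_B Y\simeq\Omega(B\Z)\simeq\Z$, which is not finite (and $B\Z$ is the paper's own example of a non-locally-finite $\infty$-groupoid). The precise flaw in your reduction is the clause asserting that $X_b$ is finite because it is ``a fibre of a map between finite $\infty$-groupoids'': since you have dropped the assumption that $B$ is finite, $X\to B$ is not such a map, and the finiteness of $X_b$ is exactly what would follow from $B_{[b]}$ being finite --- the conclusion you are trying to reach. Lemma~\ref{FEB} requires two of the three terms of the fibre sequence $X_b\to X_{[b]}\to B_{[b]}$ to be finite, and you only have one. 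Deleting that paragraph and using the finiteness of $B$ directly repairs the proof and recovers the paper's argument verbatim.
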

\begin{proof}
  It is closed under pullbacks by the previous lemma, and it also contains the 
  terminal object, hence it is closed under all finite limits.
\end{proof}

\begin{lemma}\label{loop-finite}
  In the situation of a loop space
$$  \xymatrix{
\Omega(B,b) \ar[r] \drpullback \ar[d] & 1 \ar[d]^{\name b} \\
1 \ar[r]_-{\name b} & B_{[b]}\,.}$$ we have that
  $B$ is locally finite if and only if each $\Omega(B,b)$ is finite, and in that
  case
  $$
  \norm{\Omega(B,b)}\cdot\norm{B_{[b]}}=1.
  $$
\end{lemma}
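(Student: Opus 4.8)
The plan is to read everything off from Lemma~\ref{FEB}, applied to the fibre sequence displayed in the statement. First I would note that the base $B_{[b]}$ is by definition a connected component of $B$, hence connected, so the hypothesis of Lemma~\ref{FEB} is satisfied. The total space of this fibre sequence is the point $1$, which is always finite, with $\norm{1}=1$. Thus Lemma~\ref{FEB}, with $1$ serving as one of the ``two of the three'' finite spaces in either direction, shows that $\Omega(B,b)$ is finite if and only if $B_{[b]}$ is finite, and that whenever this holds we have $1 = \norm{1} = \norm{\Omega(B,b)}\cdot\norm{B_{[b]}}$, which is precisely the asserted identity.

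It then remains to translate the componentwise finiteness of the loop spaces into local finiteness of $B$. For this I would invoke the observation recorded just before Lemma~\ref{lem:supp}, namely that an $\infty$-groupoid $B$ is locally finite if and only if each of its connected components $B_{[b]}$ is finite. Combined with the previous paragraph, this yields the chain of equivalences: $B$ is locally finite $\Leftrightarrow$ every $B_{[b]}$ is finite $\Leftrightarrow$ every $\Omega(B,b)$ is finite, which completes the proof.

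I do not anticipate any genuine obstacle here; the argument is a direct application of Lemma~\ref{FEB}. The only points that warrant a moment's care are verifying that $B_{[b]}$ really is connected, so that Lemma~\ref{FEB} is applicable, and keeping track of which two of the three spaces are being assumed finite in each direction of the biconditional --- here the point $1$ conveniently plays that role in both directions, so no circularity arises.
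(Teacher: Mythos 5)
Your proof is correct and is exactly the intended argument: the paper gives no explicit proof, presenting the lemma as an immediate special case of the preceding cardinality formulas, and applying Lemma~\ref{FEB} to the fibre sequence $\Omega(B,b)\to 1\to B_{[b]}$ with the always-finite total space $1$ is precisely how that works out. Your care about the connectedness of $B_{[b]}$ and the reduction of local finiteness of $B$ to finiteness of its components is also exactly what the paper relies on.
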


\begin{blanko}{Finite maps.}
We say that a map $p: E \to B$ is  \emph{finite} if any pullback 
to a finite base $X$  has  finite total space $X'$, as in the diagram
\begin{align}\label{relfinpb}\vcenter{\xymatrix{
X' \ar[r] \drpullback \ar[d] & E  \ar[d]^p \\
X \ar[r]_c & B .}}\end{align}
The following two results are immediate.
\end{blanko}

\begin{lemma}
  If $B$ is finite and $E \to B$ is finite, then $E$ is finite.
  \hfill \qed
\end{lemma}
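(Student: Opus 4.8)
The statement to prove is the final Lemma: if $B$ is finite and $E \to B$ is a finite map, then $E$ is finite. The plan is to reduce this immediately to the defining property of finite maps by taking the most economical test base. First I would observe that since $B$ is itself finite, we may apply the definition of finiteness of $p: E \to B$ with the pullback square \eqref{relfinpb} chosen so that $X = B$ and $c = \id_B$. The pullback of $E \to B$ along the identity is $E$ itself (up to equivalence), so $X' \simeq E$.

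The definition of finite map says precisely that any pullback to a finite base has finite total space; here the finite base is $B$, which is finite by hypothesis, so the total space $X' \simeq E$ is finite. That is the whole argument: there is essentially no computation, only the recognition that $\id_B$ is a legitimate choice of the map $c$ in the diagram, which is permissible exactly because $B$ is assumed finite.

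The only point that requires a word of care — and this is the ``main obstacle'' in so mild a statement — is the identification of the pullback of $p$ along $\id_B$ with $E$. This is the standard fact that pullback along an identity (or more generally along an equivalence) returns the original object up to canonical equivalence in $\Grpd_{/B}$; since finiteness of $\infty$-groupoids is invariant under equivalence (the homotopy groups and set of components are preserved), the conclusion that $E$ is finite follows. No further homotopy subtleties intervene. Hence the lemma holds, and as the text indicates, it is indeed immediate.
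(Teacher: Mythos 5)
Your proof is correct and is exactly the intended argument: the paper marks this lemma as immediate, and taking the test base to be $B$ itself with $c=\id_B$ in the defining pullback square is precisely how it follows from the definition of a finite map. Nothing further is needed.
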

\begin{lemma}\label{lem:finmapbasechange}
  Finite maps are stable under base change.
  \hfil \qed
\end{lemma}

\begin{lemma}\label{lem:finitemaps}
A map $E\to B$ is finite if and only if each fibre $E_b$ is finite.
\end{lemma}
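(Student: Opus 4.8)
The plan is to prove both implications by leveraging the two already-established base-change facts: finite maps are stable under base change (Lemma~\ref{lem:finmapbasechange}), and if $B$ is finite and $E \to B$ is finite then $E$ is finite (the preceding unnumbered lemma). The forward direction is the easy one: if $p : E \to B$ is finite, then for each $b \in B$ the fibre $E_b$ is the pullback of $E$ along the point inclusion $\name b : 1 \to B$, and $1$ is certainly a finite $\infty$-groupoid, so by the very definition of finite map the total space $E_b$ is finite.

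For the converse, suppose every fibre $E_b$ is finite; we must show that for every map $c : X \to B$ with $X$ finite, the pullback $X' := X \times_B E$ is finite. First I would observe that the fibres of the pulled-back map $X' \to X$ are again fibres of $p$: concretely, for $x \in X$ with $c(x) = b$, the fibre $(X')_x$ is equivalent to $E_b$, which is finite by hypothesis. So it suffices to show that a map $X' \to X$ with $X$ finite and all fibres finite has finite total space. Since $X$ is in particular locally finite, this is exactly the content of Lemma~\ref{lem:supp}: $X'$ is finite provided all its fibres are finite and nonempty for only finitely many components of $X$ --- and the latter is automatic because $X$ has only finitely many components to begin with.

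The main (minor) obstacle is bookkeeping the pasting of pullback squares: one must confirm that the fibre of $X' \to X$ over $x$ really is the fibre of $E \to B$ over $c(x)$, which follows from the pasting lemma for pullbacks applied to
$$
\xymatrix{
(X')_x \drpullback \ar[r] \ar[d] & X' \drpullback \ar[r] \ar[d] & E \ar[d]^p \\
1 \ar[r]_-{\name x} & X \ar[r]_-{c} & B ,
}
$$
where the outer rectangle computes $(X')_x$ as the fibre of $p$ over $c(x)$. Everything else is an immediate appeal to the cited lemmas; there is no convergence or homotopy-theoretic subtlety here, since all the relevant finiteness is built into the hypotheses and $1$ is the simplest possible finite test object.
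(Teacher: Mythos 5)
Your proof is correct and follows essentially the same route as the paper's: the forward direction is the definition of finite map applied to the test object $\name b:1\to B$, and the converse pulls back along an arbitrary $c:X\to B$ with $X$ finite, identifies the fibres of $X'\to X$ with fibres of $p$ by pasting of pullbacks, and concludes via Lemma~\ref{lem:supp} (the finite-support condition being automatic since $\pi_0 X$ is finite). Nothing to add.
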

\begin{proof}
  The `only if' implication is a special case of \ref{lem:finmapbasechange}.
  If $p:E \to B$ has finite fibres, then also the map $X'\to X$ in the pullback
  diagram \eqref{relfinpb} has finite fibres $X'_x=E_{c(x)}$.  But since also
  $X$ is finite, Lemma~\ref{lem:supp} then implies that $X'$ is finite.
  Hence $p$ is finite.
\end{proof}

\begin{lemma}\label{locfinbase}
Suppose $p:E\to B$ has locally finite base. 
\begin{enumerate}
  \item If $p$ is finite then $E$ is locally finite. 
  \item If $E$ is finite then $p$ is finite.
\end{enumerate}
\end{lemma}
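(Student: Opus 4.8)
The plan is to deduce both statements from the fibre-level criteria already established, namely Lemma~\ref{lem:finitemaps} (a map is finite iff each fibre is finite) and Lemma~\ref{lem:supp} (over a locally finite base, a total space is finite iff all fibres are finite and only finitely many are nonempty), together with the local-finiteness characterisation just before Lemma~\ref{lem:supp}: an $\infty$-groupoid is locally finite iff each connected component is finite, equivalently (Lemma~\ref{loop-finite}) iff each loop space is finite.

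For part~(1), suppose $p\colon E\to B$ is finite with $B$ locally finite. To show $E$ is locally finite I must show that each component $E_{[e]}$ is finite, or equivalently that each loop space $\Omega(E,e)$ is finite. First I would fix $e\in E$ with image $b = p(e)\in B$, and observe that the fibre $E_b$ is finite by Lemma~\ref{lem:finitemaps}; in particular $\Omega(E_b,e)$ is finite. Then I would use the fibre sequence $\Omega(B,b)\to E_b \to E_{[e]}$ arising from pulling $E_b \hookrightarrow E$ back along $1 \xrightarrow{\name e} E_{[e]}$... more directly, the long exact sequence of the fibration $E_b \to E \to B$ at the basepoint $e$ relates $\pi_i(E_b,e)$, $\pi_i(E,e)$ and $\pi_i(B,b)$. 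Since $B$ is locally finite and $E_b$ is finite, all homotopy groups of $E$ at $e$ are caught between finite groups in this exact sequence, hence are finite; and they vanish for $i$ large since both neighbouring terms do. This shows every $\Omega(E,e)$ is finite, so $E$ is locally finite.

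For part~(2), suppose $E$ is finite with $B$ locally finite. By Lemma~\ref{lem:finitemaps} it suffices to show each fibre $E_b$ is finite. Here I would apply Lemma~\ref{FEB} (or rather Lemma~\ref{lem:supp}) to the fibration $E_b \to E_{[e']} \to B_{[b]}$ for a chosen $e'$ in the relevant component: $E_{[e']}$ is finite since it is a component of the finite $\infty$-groupoid $E$, and $B_{[b]}$ is finite since $B$ is locally finite, so by Lemma~\ref{FEB} the fibre $E_b$ is finite; if $E_b$ is empty there is nothing to prove. Hence $p$ is finite.

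The main obstacle I anticipate is purely bookkeeping: being careful that the relevant fibre sequences have \emph{connected} base (so that Lemma~\ref{FEB} applies as stated), which forces one to work componentwise in both $E$ and $B$ and to keep track of basepoints. No convergence or genuinely $\infty$-categorical subtlety enters — everything reduces to the homotopy long exact sequence of a fibre sequence and the already-proved finiteness lemmas.
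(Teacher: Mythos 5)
Your proof is correct and follows essentially the same route as the paper: the paper's own proof simply applies Lemma~\ref{FEB} to the fibre sequence $E_b \to E_{[b]} \to B_{[b]}$, where $E_{[b]}$ denotes the \emph{full fibre} $p^{-1}(B_{[b]})$, to conclude that $E_{[b]}$ is finite iff $E_b$ is, and then reads off both statements componentwise exactly as you do. One small correction to part~(2): the fibre of $E_{[e']} \to B_{[b]}$ over $b$, for $E_{[e']}$ a single \emph{component of $E$}, is only the part of $E_b$ meeting that component, not all of $E_b$; you should instead take the full preimage $E_{[b]} = p^{-1}(B_{[b]})$ (a finite union of components of the finite $E$, hence finite) as the total space of the fibre sequence --- or, as you note parenthetically, just quote the ``only if'' direction of Lemma~\ref{lem:supp}, which gives the finiteness of every $E_b$ directly. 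Your part~(1) unwinds the long exact sequence by hand rather than citing Lemma~\ref{FEB} applied to $E_b \to E_{[b]} \to B_{[b]}$, but this is the same argument, since that is exactly how Lemma~\ref{FEB} is proved.
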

\begin{proof}
A full fibre $E_{[b]}$ of $p$ is finite if and only if $E_{b}$ is, by
Lemma~\ref{FEB}.  If each full fibre $E_{[b]}$ is finite, then each component
$E_{[e]}$ is, and if $E$ is finite then each full fibre is.
\end{proof}

\begin{lemma}\label{lem:1->B}
  $B$ is locally finite iff each name $1 \to B$ is a finite map.
\end{lemma}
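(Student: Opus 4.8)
The statement asks to characterise local finiteness of $B$ by the condition that every name $1 \to B$ is a finite map. The plan is to reduce both directions to the already-established criterion that a map is finite iff all its fibres are finite (Lemma~\ref{lem:finitemaps}), combined with the loop-space computation of Lemma~\ref{loop-finite}. The key observation is that the fibre of a name $\name b : 1 \to B$ over another name $\name{b'}$ is the mapping space $\Map_B(b,b')$, which is empty when $b,b'$ lie in different components and is equivalent to the loop space $\Omega(B,b)$ when $b' = b$; more invariantly, pulling back $\name b$ along $\name b$ gives exactly the defining pullback square for $\Omega(B,b)$ appearing in Lemma~\ref{loop-finite}.

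For the `only if' direction, suppose $B$ is locally finite. By Lemma~\ref{lem:finitemaps} it suffices to check that each fibre of $\name b : 1 \to B$ is finite. A fibre over a point in a component different from $B_{[b]}$ is empty, hence finite; a fibre over a point of $B_{[b]}$ is equivalent to $\Omega(B,b)$, which is finite by Lemma~\ref{loop-finite} since $B$ is locally finite. Hence $\name b$ is a finite map.

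For the `if' direction, suppose every name $1 \to B$ is finite. Fix $b \in B$; then pulling back $\name b$ along itself realises $\Omega(B,b)$ as the total space of a pullback of the finite map $\name b$ along the map $\name b : 1 \to B$ with finite (indeed terminal) source. By the first of the two ``immediate'' lemmas following the definition of finite maps (if $B'$ is finite and $E\to B'$ is finite then $E$ is finite), $\Omega(B,b)$ is finite. Since this holds for every $b$, Lemma~\ref{loop-finite} gives that $B$ is locally finite.

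The argument is essentially a bookkeeping exercise, so there is no serious obstacle; the only point requiring a little care is the identification of the relevant fibre/pullback with $\Omega(B,b)$ — specifically, noting that the pullback of $\name b$ along $\name b$ is precisely the square defining $\Omega(B,b)$ up to the replacement of $B$ by $B_{[b]}$, which is harmless since $\name b$ factors through $B_{[b]}$. Everything else follows formally from Lemmas~\ref{lem:finitemaps} and~\ref{loop-finite}.
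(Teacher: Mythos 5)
Your proof is correct and is essentially the argument the paper intends (the lemma is stated without proof there, being regarded as immediate): you identify the fibres of $\name b$ with $\emptyset$ or $\Omega(B,b)$ and then invoke Lemmas~\ref{lem:finitemaps} and~\ref{loop-finite} in both directions. No gaps.
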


\begin{prop}\label{prop:cartesianclosed}
  The $\infty$-category $\grpd$ of finite $\infty$-groupoids
  is cartesian closed.
\end{prop}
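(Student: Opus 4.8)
The plan is to establish cartesian closedness of $\grpd$ by exhibiting, for each pair of finite $\infty$-groupoids $B$ and $C$, an internal hom $C^B$ which is again finite, together with the expected natural equivalence $\Map_{\grpd}(A \times B, C) \simeq \Map_{\grpd}(A, C^B)$. Since $\grpd$ is a full subcategory of $\Grpd$, and $\Grpd$ is cartesian closed with internal hom $C^B = \Map(B,C)$, the only real content is the claim that $\Map(B,C)$ is finite whenever $B$ and $C$ are; the adjunction equivalence then restricts automatically, because products in $\grpd$ agree with products in $\Grpd$ by Proposition~\ref{prop:closedunderfinlims}, and $\Map_{\grpd} = \Map_{\Grpd}$ on finite objects.

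First I would reduce to the case where $B$ is connected: a finite $B$ decomposes as a finite coproduct $B \simeq \coprod_{b \in \pi_0 B} B_{[b]}$, so $\Map(B,C) \simeq \prod_{b} \Map(B_{[b]},C)$, a finite product of mapping spaces, and a finite product of finite $\infty$-groupoids is finite (Corollary after Lemma~\ref{lem:supp}). So assume $B$ connected with basepoint $b$; write $G = \Omega(B,b)$, a finite $\infty$-groupoid. Then $B \simeq BG$ in a suitable sense, and $\Map(BG, C)$ can be computed as a limit (a homotopy fixed-point / bar construction) over a diagram indexed by $B$, with all values equivalent to $C$. Concretely I would use that $B$, being connected and finite, has a finite presentation allowing $\Map(B,C)$ to be built from $C$ by finitely many pullbacks and path-spaces — e.g. via the skeletal filtration of $B$, or via the fact that $\Map(B,C) = \lim_{b' \in B} C$ and $B$ is a finite limit-cone diagram up to the relevant truncation. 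Since $\grpd$ is closed under finite limits (Proposition~\ref{prop:closedunderfinlims}) and the diagram is essentially finite in the truncated range that matters (homotopy groups of $C$ vanish above some degree, and those of $B$ likewise), the resulting limit is finite.

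The main obstacle I anticipate is making precise the sense in which $\Map(B,C)$ is a \emph{finite} limit when $B$ is only finite as an $\infty$-groupoid rather than as a simplicial set — a priori $B$ as a diagram has infinitely many cells. The clean way around this is to argue homotopy-group by homotopy-group: $\Map(B,C)$ is nonempty with finitely many components (using that $[B,C] = \pi_0\Map(B,C)$ is finite because $B$ has finite homotopy groups concentrated in finitely many degrees and $C$ likewise — an obstruction-theory count), and for a chosen map $f: B \to C$ the homotopy groups $\pi_n(\Map(B,C), f)$ are computed as $[B, \Omega^n_f C]$ or via a finite spectral sequence / finite Postnikov tower argument with finite input, hence finite, and vanish for $n$ large since both towers are finite. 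Thus $\Map(B,C)$ is locally finite with finite $\pi_0$, i.e. finite. Alternatively — and this may be the slicker route to actually write down — one invokes that $\grpd$ is an exponential ideal situation: a finite $\infty$-groupoid is a finite limit of copies of a point and loop spaces, $B$ is \emph{dualizable} in $\Grpd$ with respect to the cartesian structure precisely when it is finite (a known fact, cf. the finiteness assumptions making $\Map(B,-)$ preserve the relevant limits), and dualizability plus closedness of $\grpd$ under finite limits gives the result. With the internal hom in hand, the exponential adjunction is inherited verbatim from $\Grpd$, completing the proof.
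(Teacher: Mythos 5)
Your setup coincides with the paper's: since $\grpd\subset\Grpd$ is full and closed under finite products, the entire content is that $\Map(X,Y)$ is finite for $X,Y$ finite, and your first reduction (split into connected components, use that finite products and finite sums of finite $\infty$-groupoids are finite) is exactly the paper's first step. Where you diverge is the core finiteness argument. The paper's route is the one you gesture at under ``skeletal filtration'', and the point you flag as the main obstacle is resolved there not by a homotopy-group-by-homotopy-group analysis but by a quantitative CW statement: $Y$ finite implies $Y$ is $r$-truncated for some $r$; $X$ finite implies $X$ has the homotopy type of a CW complex with \emph{finitely many cells in each dimension}, $X=\colim_{i\in I}E_i$; discarding the cells of dimension $>r$ yields a literally finite colimit $X'$ with an $r$-connected comparison map, whence $\Map(X,Y)\simeq\Map(X',Y)=\lim_{i\in I'}\Map(E_i,Y)$ is a finite limit of finite spaces, hence finite by Proposition~\ref{prop:closedunderfinlims}. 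Your obstruction-theoretic alternative --- $\pi_0\Map(B,C)$ finite by an obstruction count, each $\pi_n(\Map(B,C),f)$ finite and eventually trivial via a spectral sequence whose $E_2$-page is twisted cohomology of $B$ with coefficients in the homotopy groups of $C$ --- is a genuinely different and viable route; its decisive inputs are finiteness of $H^s(B;\pi)$ for $B$ finite and $\pi$ finite (ultimately finiteness of group cohomology of finite groups with finite coefficients) and the fact that only finitely many rows are nonzero because $C$ is truncated. As written, however, this is named rather than executed: the finiteness of the $E_2$-page is the whole point and is not argued, and the nonabelian low-degree part of the obstruction count is not addressed. Either route can be completed, but you should commit to one and supply that input.

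One of your proposed shortcuts is false and must be removed: a finite $\infty$-groupoid is \emph{not} dualizable in $\Grpd$ for the cartesian monoidal structure. In any cartesian symmetric monoidal $\infty$-category, if $B$ is dualizable then $B\times(-)$ is a right adjoint, hence preserves the terminal object, so $B\simeq B\times 1\simeq 1$; the only dualizable object is the point. The finiteness hypotheses that make $\Map(B,-)$ behave well are of a different nature (truncatedness of the target and compactness of the source), so no dualizability argument is available here.
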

\begin{proof}
  We already know that $\Grpd$ is cartesian closed.
  We need to show that for $X$ and $Y$ finite $\infty$-groupoids, the mapping space
  $\Map(X,Y)$ is again finite.  We can assume  $X$ and $Y$ connected:
  indeed,
  if we write them as sums of their connected components,
  $X = \sum X_i$ and $Y= \sum Y_j$, then we have
  $$
  \Map(X,Y) = \Map (\sum X_i, Y) = \prod_i \Map(X_i,Y) 
  = \prod_i \sum_j \Map(X_i, Y_j)
  $$
  Since these are finite products and sums, if we can prove that each 
  $\Map(X_i,Y_j)$ is finite, then we are done.  
  Since $Y$ is finite, $\Map(S^k,Y)$ is finite for all $k\geq 0$, and
  there is $r \geq 0$ such that $\Map(S^k,Y)= *$
  for all $k\geq r$.  This is to say that $Y$ is $r$-truncated.
  On the other hand, since $X$ is finite, it has the homotopy type of
  a CW complex with finitely many cells in each dimension.  Write
  $$
  X = \colim_{i\in I} E_i
  $$
  for its realisation as a cell complex.
  Write $X' = \colim_{i\in I'} E_i$ for the colimit obtained by the same
  prescription 
  but omitting all cells of dimension $> r$; this is now a finite colimit,
  and the comparison map
  $X \to X'$ is $r$-connected.  Since $Y$ is $r$-truncated, we have
  $$
  \Map(X',Y) \isopil \Map(X,Y),
  $$
  and the first space is finite: indeed,
  $$
  \Map(X',Y) = \Map( \colim_{i\in I'} E_i, Y) = \lim_{i\in I'} \Map(E_i, Y)
  $$
  is a finite limit of finite spaces, hence is finite by 
  Proposition~\ref{prop:closedunderfinlims}.
 \end{proof}

\begin{theorem}
  For each locally finite $\infty$-groupoid $S$, the comma $\infty$-category
  $\grpd_{/S}$ of finite $\infty$-groupoids over $S$ is 
  cartesian closed.
\end{theorem}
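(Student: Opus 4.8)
The plan is to reduce the statement about the slice $\grpd_{/S}$ to the absolute case $\grpd$, which we have just shown (Proposition~\ref{prop:cartesianclosed}) is cartesian closed. The key point is that an exponential $g^f$ in a slice $\CC_{/S}$ is computed by a pullback-then-pushforward formula: for $f: X \to S$ and $g: Y \to S$ in $\grpd_{/S}$, the internal hom $\un{\Hom}_{/S}(f,g)$ is the object of $\grpd_{/S}$ whose fibre over $s \in S$ is the mapping space $\Map(X_s, Y_s)$. More precisely, in a locally cartesian closed $\infty$-category the dependent product $f_*$ along $f$ exists, and $g^f = f_*(f\upperstar g)$; since $\Grpd$ is locally cartesian closed (it is an $\infty$-topos), this formula is available to us, and the task is just to check that the result lands in $\grpd_{/S}$ whenever $f$ and $g$ do.

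First I would make precise the fibrewise description: using the fundamental equivalence $\Grpd_{/S} \simeq \Grpd^S$, the exponential of presheaves is computed pointwise, so the presheaf $\un{\Hom}_{/S}(f,g)$ sends $s \mapsto \Map(X_s, Y_s)$, where $X_s, Y_s$ are the fibres of $f, g$ over $s$. Next I would invoke Lemma~\ref{lem:finitemaps} (a map is finite iff all its fibres are finite) together with Lemma~\ref{locfinbase}(1): to show the exponential object, viewed as a map $E \to S$, has finite total space $E$, it suffices — since $S$ is only locally finite, not finite — to show that $E \to S$ is a \emph{finite map}, i.e.\ that each fibre $\Map(X_s, Y_s)$ is a finite $\infty$-groupoid, and moreover that $E$ is finite, which requires the fibres to be nonempty over only finitely many components of $S$. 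Here is where the hypotheses on $f$ and $g$ enter: $f, g \in \grpd_{/S}$ means $X$ and $Y$ are finite $\infty$-groupoids, hence by Lemma~\ref{lem:supp} each has nonempty fibres over only finitely many components of $S$, and each fibre $X_s$, $Y_s$ is finite.

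The core step is then: for $X_s$ and $Y_s$ finite $\infty$-groupoids, $\Map(X_s, Y_s)$ is finite — but this is exactly Proposition~\ref{prop:cartesianclosed}. For the support condition, note $\Map(X_s, Y_s)$ is nonempty for every $s$ (the empty groupoid maps to anything, and if $X_s = \emptyset$ then $\Map(X_s,Y_s) = *$), so the naive support could be all of $\pi_0 S$; thus one cannot conclude $E$ is finite, only that $E \to S$ is a finite map and hence (Lemma~\ref{locfinbase}(1)) $E$ is locally finite — which is all that is needed for $E$ to be an object of $\grpd_{/S}$ in the sense that... wait, actually an object of $\grpd_{/S}$ requires $E$ itself finite. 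I would resolve this by checking that the exponential in $\grpd_{/S}$ is \emph{not} computed as the exponential in $\Grpd_{/S}$ but rather its restriction: concretely, one verifies the universal property $\Map_{\grpd_{/S}}(h, g^f) \simeq \Map_{\grpd_{/S}}(h \times_S f, g)$ directly, using that $h \times_S f$ is finite over $S$ by Lemma~\ref{finite-products} and Lemma~\ref{lem:supp}, so that the adjunction only ever tests against finite objects; the representing object is then the full sub-$\infty$-groupoid of the ambient exponential lying over the (finitely many) components of $S$ where $X_s$ or $Y_s$ is nonempty, together with a point over the rest — and this is finite by the finite-support clause of Lemma~\ref{lem:supp}.

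The main obstacle is precisely this bookkeeping about supports: the ambient exponential $f_*(f\upperstar g)$ in $\Grpd_{/S}$ has full support over $\pi_0 S$ (since $\Map(\emptyset, \emptyset) = *$), so it is merely locally finite, not finite, and one must argue carefully that the \emph{correct} exponential in the comma category $\grpd_{/S}$ is a finite modification of it. Once the universal property is phrased as a natural equivalence of mapping $\infty$-groupoids tested only against finite objects over $S$, and one uses Proposition~\ref{prop:cartesianclosed} for the fibrewise mapping spaces and Lemma~\ref{lem:supp} for the global finiteness, the result follows. Everything else is a routine assembly of the lemmas already established.
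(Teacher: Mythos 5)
Your overall strategy is the same as the paper's: describe the internal mapping object of $\Grpd_{/S}$ fibrewise as $s\mapsto \Map(X_s,Y_s)$ and invoke Proposition~\ref{prop:cartesianclosed} to see that each fibre is finite. You have, however, put your finger on a real difficulty that the paper's one-line proof passes over in silence: fibrewise finiteness only shows that $\underline{\Map}_{/S}(f,g)\to S$ is a \emph{finite map} (Lemma~\ref{lem:finitemaps}), whereas membership in $\grpd_{/S}$ as defined in \ref{finite} requires the \emph{total space} to be finite, which by Lemma~\ref{lem:supp} additionally needs the fibres to be nonempty over only finitely many components of $S$. As you observe, this fails: $X$ finite forces $X_s\simeq\emptyset$ for all but finitely many $s\in\pi_0 S$, and over every such $s$ the fibre of the exponential is $\Map(\emptyset,Y_s)\simeq *$.

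The problem is that your proposed repair does not close this gap. Any object $e:E\to S$ of $\grpd_{/S}$ representing $c\mapsto \Map_{/S}(c\times_S f,g)$ must, upon testing against the names $\name s:1\to S$, have fibre $E_s\simeq\Map(X_s,Y_s)$ over \emph{every} $s$; so truncating to a finite support destroys the universal property (the truncation has empty fibre where a contractible one is required), while adjoining ``a point over the rest'' reinstates infinitely many components, so $E$ is again not finite. The conclusion is that when $\pi_0 S$ is infinite the exponential genuinely does not exist in $\grpd_{/S}$ --- indeed $\grpd_{/S}$ then has no terminal object either, since a terminal object would need contractible fibres over every point of $S$ and hence a total space equivalent to $S$. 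The statement and the fibrewise argument are correct for $S$ finite, which is all that the ensuing corollary (local cartesian closedness of $\grpd$) uses, and the same argument proves cartesian closedness of the variant $\Grpd^{\relfin}_{/S}\simeq\grpd^S$, where finiteness of the fibres is exactly the membership condition. So: right diagnosis, but the fix should be to restrict the hypothesis (or change the category), not to modify the exponential object.
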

\begin{proof}
  This is essentially a corollary of Proposition~\ref{prop:cartesianclosed}
  and the fact that the bigger $\infty$-category $\Grpd_{/S}$ is cartesian closed.
  We just need to check that the internal mapping object in $\Grpd_{/S}$
  actually belongs to $\grpd_{/S}$.   
  Given $a:A\to S$ and $b: B \to S$, the internal mapping object is
  $$
  \underline{\Map}_{/S}(a,b) \to S
  $$
  given fibrewise by
  $$
  \underline{\Map}_{/S}(a,b) _s = \Map(A_s, B_s)
  $$
  Since $A_s$ and $B_s$ are finite spaces, also the mapping space is finite,
  by \ref{prop:cartesianclosed}.
\end{proof}
\begin{cor}
  The $\infty$-category $\grpd$ is locally cartesian closed.
\end{cor}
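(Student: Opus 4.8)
The plan is to deduce the corollary directly from the preceding Theorem together with the finite-limit statement of Proposition~\ref{prop:closedunderfinlims}, using the standard characterisation of local cartesian closure: an $\infty$-category with finite limits is locally cartesian closed if and only if each of its slices is cartesian closed, equivalently if and only if every base-change functor admits a right adjoint.

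First I would record that $\grpd$ has finite limits by Proposition~\ref{prop:closedunderfinlims}; in particular it has pullbacks and a terminal object, and each slice $\grpd_{/X}$ inherits finite limits from $\grpd$. Then comes the only substantive observation: every object $X$ of $\grpd$ is a finite $\infty$-groupoid, hence in particular locally finite, so the Theorem applies and yields that $\grpd_{/X}$ is cartesian closed. This also takes care of iterated slices, since a slice of a slice $(\grpd_{/X})_{/(p\colon Y\to X)}$ is again of the form $\grpd_{/Y}$ with $Y$ a finite $\infty$-groupoid, the domain of any map in $\grpd_{/X}$ being finite.

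Finally I would assemble the pieces: having finite limits and all slices cartesian closed, $\grpd$ is locally cartesian closed. Should one prefer to exhibit the dependent products explicitly, for a map $f\colon Y\to X$ in $\grpd$ the base-change functor $f\upperstar\colon\grpd_{/X}\to\grpd_{/Y}$ is, under the identification $\grpd_{/Y}\simeq(\grpd_{/X})_{/f}$, the functor `product with $f$' on the cartesian closed $\infty$-category $\grpd_{/X}$, and such a functor always admits a right adjoint built from the internal hom of $\grpd_{/X}$ together with a pullback along $\id_f$ --- both available since $\grpd_{/X}$ is cartesian closed with finite limits. There is no real obstacle at this stage: the substance has already been supplied by the Theorem and by Proposition~\ref{prop:cartesianclosed}, and the only point requiring care is that all these constructions stay within $\grpd$, which is exactly what those results guarantee.
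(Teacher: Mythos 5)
Your proposal is correct and follows the same route the paper intends: the corollary is immediate from the preceding Theorem because every object of $\grpd$ is finite, hence locally finite, so each slice $\grpd_{/X}$ is cartesian closed, and finite limits are supplied by Proposition~\ref{prop:closedunderfinlims}. The extra remarks on iterated slices and on exhibiting the dependent products are fine but not needed beyond what the Theorem already provides.
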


\section{Finiteness conditions on $\infty$-groupoid slices}

\label{sec:finconslice}

In this section, 
after some motivation and background from linear algebra, we first explain the
finiteness conditions imposed on slice categories in order to model vector
spaces and profinite-dimensional vector spaces.  Then afterwards we assemble
all this into $\infty$-categories using more formal constructions.

\begin{blanko}{Duality in linear algebra.}\label{vect-rappels}
  There is a fundamental duality 
  $$
  \Vect \simeq \pro\vect\op
  $$
  between vector spaces and profinite-dimensional
vector spaces: given any vector space $V$, the linear dual $V\upperstar $ is a
profinite-dimensional vector space, and conversely, given a
profinite-dimensional vector space, its continuous dual is a vector space.
This equivalence is a formal consequence of the observation that the category 
$\vect$ of finite-dimensional vector spaces is self-dual: $\vect\simeq 
\vect\op$, and the fact that $\Vect= \ind\vect$, the ind completion of $\vect$.

In the fully coordinatised situation typical to algebraic combinatorics, the vector
space arises from a set $S$ (typically an infinite set of 
isoclasses of combinatorial objects): the vector space is then
$$V= \Q_S = 
\left\{\,\sum_{s\in S} c_s\,\delta_s\;:\;c_s\in\Q\text{ almost all zero}
\right\},$$
with basis the symbols $\delta_s$ for 
each $s\in S$.
The linear dual is the function
space $V\upperstar =\Q^S$, with canonical pro-basis consisting of
the functions $\delta^s$, taking the value 1 on $s$ and 0 elsewhere.

Vectors in $\Q_S$
are finite linear combinations of the $\delta_s$, and we represent a
vector as an infinite column vector $\vec v$ 
with only finitely many non-zero
entries.  A linear map $f:\Q_S \to \Q_T$ is given by 
matrix multiplication
$$
\vec v \mapsto A \cdot \vec v .
$$
for $A$ an infinite
$2$-dimensional matrix with $T$-indexed rows and $S$-indexed columns, and
with the crucial property that it is {\em column finite}: in each
column there are only finitely many non-zero entries. 
More generally, the matrix multiplication of two column-finite
matrices makes sense and is again a column-finite matrix.  The
identity matrix is clearly column finite.  A basis element
$\delta_s$ is identified with the
column vector all of whose entries are zero, except the one of index
$s$.

On the other
hand, elements in the function space $\Q^S$ are represented as
infinite row vectors. 
The continuous linear map $\Q^T \to \Q^S$,
dual to the linear map $f$, is represented by the {\em same matrix} 
$A$, but viewed now as sending a row vector $\vec w$ (indexed by $T$)
to the matrix product $\vec w \cdot A$.  Again the fact that $A$ is 
column finite ensures that this matrix product is well defined.

There is a canonical perfect pairing
\begin{eqnarray*}
    \Q_S \times \Q^S  &\longrightarrow& \Q \\
    (\vec v , f) &\longmapsto& f(\vec v)
\end{eqnarray*}
given by evaluation. In matrix terms, it is just a matter of 
multiplying $f\cdot \vec v$.
\end{blanko}

This duality has a very neat description in homotopy linear algebra
over $\grpd$, the $\infty$-category of finite $\infty$-groupoids.
While the vector space $\Q_{\pi_0 S}$ is modelled by the
$\infty$-category $\grpd_{/S}$, the function space $\Q^{\pi_0 S}$ is
modelled by the $\infty$-category $\grpd^S$.  

The classical duality results from taking cardinality of a duality on the
categorical level, that we proceed to explain.  For the most elegant definition
of cardinality we first need to introduce the objective versions of $\Vect$ and
$\pro\vect$.  These will be $\infty$-categories $\ind\lin$ whose objects are of
the form $\grpd_{/S}$, and $\pro\lin$ whose objects are of the form $\grpd^S$.

\bigskip

We shall need also the following variations.
For $S$ a locally finite $\infty$-groupoid,
we are concerned with the following $\infty$-categories.

\begin{itemize}
  \item $\grpd_{/S}$: the slice $\infty$-category of morphisms
  $\sigma \to S$, with $\sigma$ finite.
  \item $\grpd^S$: the full subcategory of $\Grpd^S$ spanned by the presheaves
  $S\to\Grpd$ whose images lie in $\grpd$.
  \item $\Grpd^{\relfin}_{/S}$: the full subcategory of $\Grpd_{/S}$ 
  spanned by the finite maps $p:X\to S$.
  \item $\grpd^S_{\finsup}$: the full subcategory of $\Grpd^S$ 
  spanned by presheaves with finite values and finite support.
  By the support of a presheaf $F: S \to \Grpd$ we mean the
  full subgroupoid of $S$ spanned by the objects $x$ for which
  $F(x)\neq\emptyset$.
\end{itemize}

\begin{prop}\label{fundamentalfinite}
  The fundamental equivalence $\Grpd_{/S} \simeq \Grpd^S$
  restricts to equivalences
  $$
  \Grpd^{\relfin}_{/S} \simeq \grpd^S  \qquad \text{ and } \qquad
  \grpd_{/S} \simeq \grpd^S_{\finsup}
  $$
\end{prop}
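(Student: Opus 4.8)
The plan is to recall that the fundamental equivalence $\Grpd_{/S} \simeq \Grpd^S$ works fibrewise: an object $p: X \to S$ corresponds to the presheaf $s \mapsto X_s$, the homotopy fibre of $p$ over $s$. Since both claimed equivalences are obtained by restricting this one equivalence to full subcategories on either side, it suffices to check that the named subcategories correspond to one another under the fibrewise dictionary — that is, that an object $p: X \to S$ lies in the left-hand subcategory if and only if its associated presheaf lies in the right-hand subcategory. This reduces everything to pointwise characterisations of the four finiteness conditions.

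For the first equivalence $\Grpd^{\relfin}_{/S} \simeq \grpd^S$: by Lemma~\ref{lem:finitemaps}, a map $p: X \to S$ is finite precisely when each fibre $X_s$ is a finite $\infty$-groupoid; this is literally the condition that the corresponding presheaf $s \mapsto X_s$ takes values in $\grpd$, which is the defining condition for membership in $\grpd^S$. So the two subcategories are cut out by the same condition on fibres, and the equivalence restricts as claimed.

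For the second equivalence $\grpd_{/S} \simeq \grpd^S_{\finsup}$: here $p: X \to S$ lies in $\grpd_{/S}$ iff the total space $X$ is finite. Since $S$ is locally finite, Lemma~\ref{lem:supp} tells us that $X$ is finite if and only if all fibres $X_s$ are finite \emph{and} $X_s$ is nonempty for only finitely many $s \in \pi_0 S$. Translating through the fibrewise dictionary, the first condition says the presheaf takes values in $\grpd$, and the second says its support (the full subgroupoid of $S$ on those $x$ with $F(x) \neq \emptyset$) has finitely many components — but the support is itself a subgroupoid of the locally finite $S$, so finitely many components is exactly the same as the support being a finite $\infty$-groupoid. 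Together these are precisely the defining conditions for $\grpd^S_{\finsup}$.

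The argument is essentially a bookkeeping matching of definitions, so there is no serious obstacle; the only point requiring a moment's care is the second equivalence, where one must invoke local finiteness of $S$ twice — once to apply Lemma~\ref{lem:supp} (so that ``all fibres finite plus finite support'' really is equivalent to ``total space finite''), and once to identify ``support has finitely many components'' with ``support is finite'' (a subgroupoid of a locally finite groupoid is automatically locally finite). One should also note that all four subcategories are full and closed under equivalences, so that ``restricting'' the equivalence is legitimate in the model-independent sense discussed earlier in the paper.
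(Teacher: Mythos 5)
Your proof is correct and takes essentially the same route as the paper's: both equivalences are restrictions of the fundamental equivalence to full subcategories, with the objects matched under the fibrewise dictionary via Lemma~\ref{lem:finitemaps} for the first equivalence and Lemma~\ref{lem:supp} for the second. The one point you spell out that the paper leaves implicit --- that ``finite support'' coincides with ``the support has finitely many components'' because a full subgroupoid of the locally finite $S$ is finite precisely when it has finitely many components --- is a worthwhile clarification but not a departure in method.
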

\begin{proof}
  The inclusions $\Grpd^{\relfin}_{/S} \subset \Grpd_{/S}$ 
  and $\grpd^S \subset \Grpd^S$
  are both full, and the objects characterising them correspond to each other
  under the fundamental equivalence because of 
  Lemma~\ref{lem:finitemaps}.
  Similarly, the inclusions $\grpd_{/S}\subset \Grpd_{/S}$ and 
  $\grpd^S_{\finsup} \subset \Grpd^S$ are both full, 
  and the objects characterising them correspond to each other
  under the fundamental equivalence, this time in virtue of 
  Lemma~\ref{lem:supp}.
\end{proof}

\begin{prop}\label{finitetypespan}
    For a span $S \stackrel p\leftarrow M \stackrel q\to T$ of locally finite 
    $\infty$-groupoids,
the following are equivalent:
\begin{enumerate}
\item $p$ is finite
\item The linear functor $F:= q\lowershriek \circ p\upperstar :
\Grpd_{/S}\to \Grpd_{/T}$ restricts to
$$
\grpd_{/S}\stackrel{p\upperstar}\longrightarrow 
\grpd_{/M}\stackrel{q\lowershriek}\longrightarrow \grpd_{/T}
$$
\item The transpose $F^t := p\lowershriek\circ q\upperstar
: \Grpd_{/T} \to \Grpd_{/S}$
restricts to
$$
\Grpd^{\relfin}_{/T}
\stackrel{q\upperstar}\longrightarrow 
\Grpd^{\relfin}_{/M}\stackrel{p\lowershriek}\longrightarrow 
\Grpd^{\relfin}_{/S}
$$
\item The dual functor $F^\vee: \Grpd^T \to \Grpd^S$ restricts to 
$$
\grpd^T
\to
\grpd^S
$$
\item The dual of the transpose, $F^t{}^\vee :\Grpd^S \to \Grpd^T$ restricts to
$$
\grpd^S_{\finsup} 
\to
\grpd^T_{\finsup}
$$
\end{enumerate}
\end{prop}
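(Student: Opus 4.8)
The plan is to prove the chain of equivalences by first establishing that the "transpose" and "dual" and "dual of transpose" operations on a span $S \stackrel p\leftarrow M \stackrel q\to T$ are really just relabellings of the same combinatorial data, so that conditions (2)--(5) are all obtained from one another by the two fundamental equivalences of Proposition~\ref{fundamentalfinite}. The genuine content is the equivalence of (1) with any one of these, say (2), and the rest will follow formally. So I would first reduce the problem: (1)$\Leftrightarrow$(2) is the core; then (2)$\Leftrightarrow$(3) and (4)$\Leftrightarrow$(5) are each an instance of the equivalence $\Grpd^{\relfin}_{/S}\simeq\grpd^S$ (combined with $\grpd_{/S}\simeq\grpd^S_{\finsup}$) applied to the relevant $\infty$-groupoids $S$, $M$, $T$ simultaneously and naturally in the maps; and (2)$\Leftrightarrow$(4) is the fundamental equivalence $\Grpd_{/S}\simeq\Grpd^S$ together with the observation from~\ref{Lineardual} that $F^\vee$ is literally the same span read backwards, i.e. the composite $p\lowershriek q\upperstar$ read in the presheaf model.

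For the core equivalence (1)$\Leftrightarrow$(2): the "if" direction. Suppose $F = q\lowershriek\circ p\upperstar$ restricts to finite slices. Apply $F$ to a name $\name s : 1 \to S$, which lies in $\grpd_{/S}$ since $1$ is finite. By the matrix computation in~\ref{linearfunctorsfromspans} (the "Matrices" paragraph), $F(\name s) = (M_s \to T)$, the fibre of $p$ over $s$ mapped by $q$; by hypothesis this is an object of $\grpd_{/T}$, so in particular $M_s$ is a finite $\infty$-groupoid. Since this holds for every $s$, every fibre of $p$ is finite, hence $p$ is finite by Lemma~\ref{lem:finitemaps}. For the "only if" direction: suppose $p$ is finite. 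Then $p\upperstar$ sends $\grpd_{/S}$ to $\grpd_{/M}$ — indeed, given a finite $X\to S$, the pullback $p^*X = X\times_S M$ maps to $X$ with fibres $(p^*X)_x = M_{c(x)}$, which are finite because $p$ is finite; $X$ is finite by hypothesis; so by Lemma~\ref{lem:supp} (or directly Lemma~\ref{lem:finitemaps} plus "finite total space over finite base") $p^*X$ is finite. Wait — I should be slightly careful that $X\to S$ finite and $X$ finite are compatible; indeed an object of $\grpd_{/S}$ has finite domain, and its pullback along the finite map $p$ again has finite domain by the first lemma of the "Finite maps" paragraph combined with base-change stability. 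Then $q\lowershriek$ (postcomposition with $q$) obviously preserves the property of having finite domain. Hence $F$ restricts, giving (2).

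The main obstacle I anticipate is purely bookkeeping rather than conceptual: making sure that "finite map" and "finite total space" conditions are matched up correctly at each of the three $\infty$-groupoids $S$, $M$, $T$, and in particular that the variant categories $\Grpd^{\relfin}_{/S}$ and $\grpd^S$ (resp. $\grpd_{/S}$ and $\grpd^S_{\finsup}$) are interchanged \emph{compatibly with} the functors $p\lowershriek$, $p\upperstar$, $q\lowershriek$, $q\upperstar$ under the fundamental equivalence — i.e. that the squares expressing "$p\upperstar$ corresponds to $q\upperstar$ corresponds to..." actually commute. This is essentially automatic from naturality of the fundamental equivalence (and the Beck--Chevalley rule~\eqref{BC}), but it is the step where one must resist hand-waving. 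Once (1)$\Leftrightarrow$(2) is in hand, I would organise the remaining equivalences as a single diagram: the four conditions (2),(3),(4),(5) sit at the corners of a square whose edges are the two flavours of fundamental equivalence, and each edge is an "iff" because the equivalence is an equivalence of $\infty$-categories carrying the relevant full subcategories to one another (Proposition~\ref{fundamentalfinite}), so a functor restricts on one side precisely when its transported version restricts on the other.
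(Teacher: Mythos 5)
Your core argument for (1)$\Leftrightarrow$(2) is correct and is essentially what the paper means when it says this biimplication ``follows from the definition of finite map'': the forward direction is literally the definition (pulling $p$ back to the finite base $X$ gives finite total space), and the converse follows by testing on names and invoking Lemma~\ref{lem:finitemaps}. The problem is in how you dispose of (3), (4) and (5). You pair the conditions incorrectly under the fundamental equivalence. Proposition~\ref{fundamentalfinite} identifies $\grpd_{/S}$ with $\grpd^S_{\finsup}$ and $\Grpd^{\relfin}_{/S}$ with $\grpd^S$; since $F$ and $F^t{}^\vee$ are the same functor transported across $\Grpd_{/S}\simeq\Grpd^S$, and likewise $F^t$ and $F^\vee$, the equivalences that come for free are (2)$\Leftrightarrow$(5) and (3)$\Leftrightarrow$(4) --- not (2)$\Leftrightarrow$(3) and (4)$\Leftrightarrow$(5) as you claim. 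Conditions (2) and (3) go in opposite directions, involve different subcategories (finite total space versus finite fibres), and are not relabellings of one another; the same goes for (4) and (5). Your proposed bridge (2)$\Leftrightarrow$(4) also fails for the same reason: under the fundamental equivalence $\grpd_{/S}$ corresponds to $\grpd^S_{\finsup}$, not to $\grpd^S$, so transporting (2) lands you at (5), never at (4).

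The upshot is that your proof never actually touches the ``finite fibres'' family of conditions, (3) and (4), and these do not follow formally from (2). You need one more direct argument, namely (1)$\Leftrightarrow$(3), which the paper also extracts from the definition of finite map: if $p$ is finite, then $q\upperstar$ preserves finite maps by base-change stability (Lemma~\ref{lem:finmapbasechange}) and $p\lowershriek$ does too since a composite of finite maps is finite (the fibre of $Y\to M\to S$ over $s$ is a finite map onto the finite $\infty$-groupoid $M_s$, hence finite); conversely, if $F^t$ preserves $\Grpd^{\relfin}$, apply it to $\id_T$ to get $p\lowershriek q\upperstar(\id_T)=p$ itself, forcing $p$ to be finite. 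With that added, and the pairings corrected to (2)$\Leftrightarrow$(5), (3)$\Leftrightarrow$(4), your proof closes up and coincides with the paper's.
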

\begin{proof}
  The biimplications (1)$\Leftrightarrow$(2) and (1)$\Leftrightarrow$(3)
  follow from the definition of finite map.
  The biimplications  (2)$\Leftrightarrow$(5) and (3)$\Leftrightarrow$(4)
  follow from the equivalences in Proposition~\ref{fundamentalfinite}.
\end{proof}

\begin{blanko}{Finite homotopy sums.}
  The $\infty$-category $\grpd_{/S}$ has finite homotopy sums: for $I$ finite
  and $F:I \to \grpd_{/S}$ we have $\colim F = p\lowershriek (X \to I \times
  S)$, where $p:I\times S \to S$ is the projection.  A family $X \to I \times S$
  comes from some $F:I \to \grpd_{/S}$ and admits a homotopy sum in $\grpd_{/S}$
  when for each $i\in I$, the partial fibre $X_i$ is finite.  Since already $I$
  was assumed finite, this is equivalent to having $X$ finite.

  The following is the finite version of Proposition~\ref{HoSum}
\end{blanko}

\begin{lemma}\label{hosum}
  The $\infty$-category  $\grpd_{/S}$ is the finite-homotopy-sum completion of $S$.
\end{lemma}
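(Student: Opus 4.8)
The plan is to prove this as the finite counterpart of Proposition~\ref{HoSum}. Made precise, the assertion is that for every $\infty$-category $\CC$ admitting finite homotopy sums, precomposition with the (corestricted) Yoneda embedding $j\colon S\to\grpd_{/S}$, $s\mapsto\name s$, induces an equivalence between $\Fun(S,\CC)$ and the full subcategory of $\Fun(\grpd_{/S},\CC)$ spanned by the functors that preserve finite homotopy sums. Note this is the appropriate universal property: by the discussion preceding the statement, $\grpd_{/S}$ itself admits finite homotopy sums, so it is a candidate for the completion.

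First I would record, via Lemma~\ref{BasisRep}, that every object $p\colon\sigma\to S$ of $\grpd_{/S}$ is the homotopy sum $\int^{x\in\sigma}\name{p(x)}$ of names, an expression in which the indexing $\infty$-groupoid $\sigma$ is \emph{finite}; hence every object of $\grpd_{/S}$ is canonically a \emph{finite} homotopy sum of names. This has two consequences: a functor $G\colon\grpd_{/S}\to\CC$ preserving finite homotopy sums is determined up to equivalence by $G\circ j$, which gives fully faithfulness of the comparison functor; and moreover such a $G$ necessarily coincides with the left Kan extension $\operatorname{Lan}_j(G\circ j)$, since both are evaluated by the same finite homotopy sums of names.

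For essential surjectivity, given $F\colon S\to\CC$ I would form $\bar F:=\operatorname{Lan}_j F$ and evaluate it by the pointwise formula: $\bar F(X)$ for $X=(\sigma\to S)$ is the colimit of $F$ over the comma $\infty$-category $S_{/X}$. Because $S$ is an $\infty$-groupoid and $\sigma$ is finite, $S_{/X}$ is equivalent to the total space $\sigma$, hence is a finite $\infty$-groupoid, so the colimit defining $\bar F(X)$ is a finite homotopy sum and therefore exists in $\CC$ by hypothesis; this is Lemma~4.3.2.13 of \cite{Lurie:HTT} applied as in Proposition~\ref{HoSum}, with the extra observation that the diagram shapes are finite. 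Since $j$ is fully faithful we get $\bar F\circ j\simeq F$, and since left Kan extensions preserve all colimits that exist in the source \cite[Lemma~5.1.5.5(1)]{Lurie:HTT}, in particular the finite homotopy sums of $\grpd_{/S}$, the functor $\bar F$ lies in the subcategory of finite-homotopy-sum-preserving functors. Combined with the previous paragraph, the two constructions are mutually inverse.

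The one point that genuinely needs care, and the only place this argument departs from Proposition~\ref{HoSum}, is the identification of the comma $\infty$-categories $S_{/X}$ (for $X\in\grpd_{/S}$) with the finite total spaces $\sigma$: this is exactly what allows the hypothesis ``admits all homotopy sums'' to be weakened to ``admits finite homotopy sums'', and it is where the finiteness condition on $\sigma$ is used. Everything else is bookkeeping parallel to the unbounded case.
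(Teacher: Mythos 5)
Your argument is correct and is exactly the intended one: the paper states this lemma without proof, presenting it as ``the finite version of Proposition~\ref{HoSum}'', and your write-up carries out precisely that proof with ``finite'' inserted throughout, correctly isolating the one point where the finiteness hypothesis enters --- namely that the comma $\infty$-category $S_{/X}$ for $X=(\sigma\to S)$ is equivalent to the finite total space $\sigma$, so the pointwise left Kan extension only requires $\CC$ to admit finite homotopy sums. Your appeals to \cite[Lemma~4.3.2.13]{Lurie:HTT} and \cite[Lemma~5.1.5.5(1)]{Lurie:HTT} match the level of detail of the paper's own proof of Proposition~\ref{HoSum}, so nothing further is needed.
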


\section{$\infty$-categories of linear maps with infinite $\infty$-groupoid coefficients}

\label{sec:weird}

Our main interest is in the linear $\infty$-categories with finite
$\infty$-groupoid coefficients, but it is technically simpler to introduce first
the infinite-coefficients version of these $\infty$-categories, since they can
be defined as subcategories in $\LIN$, and can be handled with the ease of
presentable $\infty$-categories.

\bigskip

Recall that a span $(S\stackrel p\longleftarrow M \stackrel q\longrightarrow T)$
defines a linear functor
$$
\Grpd_{/S}\stackrel{p\upperstar}\longrightarrow 
\Grpd_{/M}\stackrel{q\lowershriek}\longrightarrow \Grpd_{/T}.
$$

Let $\Lin \subset \LIN$ be the $\infty$-category whose objects are the slices
$\Grpd_{/\sigma}$, with $\sigma$ finite, and whose morphisms are 
those linear functors between them which preserve finite objects.
Clearly these are given by
the spans
of the form $\sigma \leftarrow \mu \rightarrow \tau$ where $\sigma, \tau$ and
$\mu$ are finite.  Note that there are equivalences of $\infty$-categories
$\Grpd_{/\sigma}\simeq \Grpd^{\sigma}$ for each finite $\sigma$.

From now on we adopt the blanket convention that Greek letters denote finite 
$\infty$-groupoids.

\bigskip

Let $\ind\Lin$ be the $\infty$-category whose objects are the slices $\Grpd_{/S}$ with 
$S$
locally finite, and whose
morphisms are the linear functors between them that preserve finite objects.
These correspond to the spans of the form $S\stackrel p\leftarrow M \rightarrow 
T$ with $p$ finite.   

Let $\pro\Lin$ be the $\infty$-category whose objects are the presheaf categories
$\Grpd^{S}$ with $S$ locally finite, and whose morphisms are the continuous 
linear functors:

A linear functor $F: \Grpd^{T}\to \Grpd^{S}$ is called \emph{continuous} when for all 
$\varepsilon \subset S$ there exists 
$\delta \subset T$ and a factorisation 
$$\xymatrix{
      \Grpd^{T} \ar[r]\ar[d]_F &  \Grpd^{\delta} \ar[d]^{F_{\delta}} \\
      \Grpd^{S}\ar[r] &  \Grpd^{\varepsilon}
  .}$$
Here we quantify over finite groupoids $\varepsilon$ and $\delta$ with full inclusions into $S$ and $T$; the horizontal maps are the projections of the canonical pro-structures.

\begin{prop}\label{prop:cont}
    For a linear functor $F: \Grpd^T \to \Grpd^S$ in $\LIN$, represented by a 
    span
    $$
    S \stackrel p \leftarrow M \stackrel q \to T,
    $$
    the following are equivalent.
    
    \begin{enumerate}
    
        \item The span is of finite type (i.e.~$p$ is a finite map).
    
        \item $F$ is continuous.
    \end{enumerate}
\end{prop}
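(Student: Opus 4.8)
The plan is to prove both implications by a direct analysis of how the linear functor $F = p_!q^*$ interacts with the canonical pro-structure on presheaf categories. Recall that for a locally finite $S$, the filtered system of finite full subgroupoids $\varepsilon \subset S$ exhibits $\Grpd^S$ as the limit (in the appropriate sense) of the $\Grpd^\varepsilon$, with structure maps $\Grpd^S \to \Grpd^\varepsilon$ given by restriction along the inclusion; dually on the slice side these are the finite-support truncations. The key technical observation I would isolate first is a compatibility lemma: for a fixed finite $\varepsilon \subset S$, the composite $\Grpd^T \xrightarrow{F} \Grpd^S \to \Grpd^\varepsilon$ is itself a linear functor, represented by the span $\varepsilon \leftarrow M_\varepsilon \to T$ obtained by pulling $M$ back along $\varepsilon \hookrightarrow S$, i.e.\ $M_\varepsilon = p^{-1}(\varepsilon)$ with its evident legs. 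This is just the Beck--Chevalley rule \eqref{BC} applied to the pullback square defining $M_\varepsilon$.

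For (1)$\Rightarrow$(2): assume $p$ is finite. Fix $\varepsilon \subset S$ finite. Since $\varepsilon$ is finite and $p$ is a finite map, Lemma~\ref{lem:finitemaps} (together with the base-change stability \ref{lem:finmapbasechange} and the fact that a finite map to a finite base has finite total space) gives that $M_\varepsilon = p^{-1}(\varepsilon)$ is a finite $\infty$-groupoid. Let $\delta \subset T$ be the (finite, since $M_\varepsilon$ is finite) full image of $q|_{M_\varepsilon}\colon M_\varepsilon \to T$; then $q|_{M_\varepsilon}$ factors as $M_\varepsilon \to \delta \hookrightarrow T$. I would then check that the span $\varepsilon \leftarrow M_\varepsilon \to \delta$ induces a functor $F_\delta\colon \Grpd^\delta \to \Grpd^\varepsilon$ making the required square commute, the point being that precomposing $F$ with restriction $\Grpd^T \to \Grpd^\delta$ loses no information relevant to $M_\varepsilon$ precisely because $q(M_\varepsilon) \subseteq \delta$. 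Commutativity of the square is again a Beck--Chevalley bookkeeping argument.

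For (2)$\Rightarrow$(1): assume $F$ is continuous; I want to show every fibre $M_i$ ($i\in T$) is finite, which by Lemma~\ref{lem:finitemaps} gives that $p$ is finite. Fix $i\colon 1 \to T$. The trick is to probe $F$ with the representable $h^i \in \Grpd^T$: one computes, via the matrix description, that $F(h^i)$ has fibre over $s\in S$ the mapping-space/fibre $M_{s,i}$, so that $M_i \to S$ is (essentially) the object $F(h^i) \in \Grpd_{/S}$ under the fundamental equivalence \ref{fund-eq}. Now I would argue that $M_i$ is locally finite for free — its components are fibres of $p$ over a locally finite base, hence their loop spaces are loop spaces in $M$ which is locally finite — so the only issue is finiteness of $\pi_0$, equivalently that $M_i$ meets only finitely many components of $S$. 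Apply continuity with $\varepsilon$ ranging over the finite full subgroupoids of $S$: continuity forces $F$ to factor through some $\Grpd^\delta$ with $\delta \subset T$ finite, and then through $F_\delta\colon \Grpd^\delta \to \Grpd^\varepsilon$; tracing $h^i$ through, one sees that outside $\varepsilon$ the object $F(h^i)$ is supported only according to what survives restriction to $\delta$ and then pushes to $\varepsilon$ — a finite groupoid. Chasing this for all $\varepsilon$ and using that $\{\varepsilon\}$ covers $S$, one concludes $M_i$ has finite support in $\pi_0 S$, hence is finite.

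The main obstacle I anticipate is the bookkeeping in the $(2)\Rightarrow(1)$ direction: turning the abstract factorisation hypothesis into the concrete statement ``$p^{-1}(\varepsilon)$ is finite for all finite $\varepsilon$'' requires correctly identifying the functor $F_\delta$ and verifying that the commuting square really does pin down the fibres of $M \to S$ over $\varepsilon$ as living over $\delta$; the subtlety is that one must rule out ``spreading'' of the span — a priori $q$ could send $p^{-1}(\varepsilon)$ to infinitely many components of $T$ — and it is exactly continuity (the existence of the finite $\delta$) that prevents this. Everything else is a sequence of applications of Beck--Chevalley \eqref{BC}, the fundamental equivalence, and the finiteness lemmas \ref{lem:supp}, \ref{lem:finitemaps}, \ref{lem:finmapbasechange} already established.
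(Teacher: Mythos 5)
Your direction (1)$\Rightarrow$(2) is correct and is essentially the paper's argument: pull $M$ back along $j\colon\varepsilon\hookrightarrow S$ to get a finite $\mu$, let $\delta$ be the essential full image of $\mu\to T$, and verify the square by Beck--Chevalley. The compatibility lemma you isolate at the outset (that $j\upperstar\circ F$ is represented by the pulled-back span $\varepsilon\leftarrow p^{-1}(\varepsilon)\to T$) is exactly the right tool.

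The gap is in (2)$\Rightarrow$(1), where you probe $F$ with the representables $h^i$ for $i\in T$. Since $F=p\lowershriek q\upperstar$ and $h^i$ corresponds to $\name{i}\colon 1\to T$, the object $F(h^i)$ is $M_i\to S$ where $M_i=q^{-1}(i)$ is a fibre of the \emph{right} leg $q$. Showing these fibres are finite would, by Lemma~\ref{lem:finitemaps}, prove that $q$ is finite (profinite type), not that $p$ is finite (finite type), so your invocation of that lemma proves the wrong statement. Worse, the intermediate claim is simply false under hypothesis (2): take $S=\mathbb{N}$ discrete, $T=1$, $M=\mathbb{N}$ with $p=\id$ and $q$ the unique map. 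This span is of finite type and $F$ is continuous, yet $M_\ast=q^{-1}(\ast)=\mathbb{N}$ is infinite. (Your subsidiary claims that $M_i$ is ``locally finite for free'' and that finite support over $\pi_0 S$ plus local finiteness implies finiteness are also unjustified, but they are moot given the leg swap.) The fix is to restrict on the $S$-side rather than probe on the $T$-side, which is what the paper does: the functor $j\upperstar\circ F\colon\Grpd^T\to\Grpd^\varepsilon$ has two span representations, namely $\varepsilon\leftarrow p^{-1}(\varepsilon)\to T$ (your compatibility lemma) and, by continuity, $\varepsilon\leftarrow\mu\to\delta\hookrightarrow T$ with $\mu$ finite because $F_\delta$ is a morphism of $\Lin$. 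By the uniqueness of span representations of linear functors (\ref{internalLin}), $p^{-1}(\varepsilon)\simeq\mu$ is finite; as $\varepsilon$ ranges over all finite full subgroupoids of $S$, this is precisely the definition of $p$ being a finite map.
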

\begin{proof}
    It is easy to see that if the span is of finite type then  $F$ is continuous: 
    for any given
    finite $\varepsilon \subset S$ with inclusion $j$, the pullback $\mu$
    is finite, and we can take $\delta$ to be the essential full image 
    of the composite $q \circ m$: 
     \begin{equation}\label{epsdeltdiag}
	 \xymatrix{
    \varepsilon \ar[d]_j & \mu \dlpullback \ar[l]_{\bar p} 
    \ar[d]^m  \ar[r]^{\bar q} & \delta\ar[d]^i\\
    S & \ar[l]^p M \ar[r]_q & T .}
    \end{equation}
    Now by Beck-Chevalley,
    $$j\upperstar 
    p\lowershriek q\upperstar = \bar p\lowershriek m\upperstar q \upperstar =
     \bar p \lowershriek \bar q{}\upperstar i \upperstar$$ 
    which is precisely the continuity condition.

    Conversely, if the factorisation in the continuity diagram exists,
    let $\varepsilon \leftarrow \mu \to \delta$ be the span (of finite 
    $\infty$-groupoids) representing $f_{\delta_\varepsilon}$.  Then we have the
    outer rectangle of the diagram~\eqref{epsdeltdiag} and an 
    isomorphism
    $$j\upperstar 
    p\lowershriek q\upperstar = 
     \bar p \lowershriek \bar q{}\upperstar i \upperstar$$ 
    Now a standard argument implies the existence of $m$ completing
    the diagram: namely take the pullback of $j$ and $p$, with the 
    effect of interchanging the order of upperstar and lowershriek.
    Now both
    linear maps are of the form upperstars-followed-by-lowershriek,
    and by uniqueness of this representation, the said pullback must 
    agree with $\mu$ and in particular is finite.  Since this is true
    for every $\varepsilon$, this is precisely to say that $p$ is 
    a finite map.
\end{proof}

The continuity condition is precisely continuity for the profinite 
topology, as we proceed to explain.
Every locally finite $\infty$-groupoid $S$ is canonically the filtered colimit of
its finite full subgroupoids:
$$
S = \colim_{\alpha\subset S} \alpha .
$$
Similarly, $\Grpd^S$ is a cofiltered limit of $\infty$-categories $\Grpd^\alpha$:
$$
\Grpd^S = \lim_{\alpha\subset S} \Grpd^\alpha .
$$
This leads to the following `categorical' description of the mapping spaces 
(compare~SGA4~\cite{SGA4.1}, Exp.1):
$$
\pro\Lin (\Grpd^T, \Grpd^S) : = \lim_{\varepsilon\subset S} \colim_{\delta\subset 
T} \Lin(\Grpd^\delta, \Grpd^\varepsilon) .
$$

\section{$\infty$-categories of linear maps with finite $\infty$-groupoid coefficients}
\label{sec:lin}

In this section we shall work with coefficients
in $\grpd$, the $\infty$-category of finite $\infty$-groupoids.

\begin{blanko}{The $\infty$-category $\lin$.}
  Let $\bigcat$ denote the (very large) $\infty$-category of possibly 
  large $\infty$-categories.
  We define $\lin$ to be the subcategory of $\bigcat$ whose objects are those
$\infty$-categories equivalent to $\grpd_{/\sigma}$ for some finite $\infty$-groupoid $\sigma$,
and whose mapping spaces are the full subgroupoids of those of $\bigcat$ given by
the functors which are restrictions of functors in $\Lin(\Grpd_{/\sigma}, 
\Grpd_{/\tau})$.  Note that the latter mapping space was exactly defined as those
linear functors in $\LIN$ that preserved finite objects.  Hence, by 
construction there is an equivalence of mapping spaces 
$$
\lin(\grpd_{/\sigma},\grpd_{/\tau}) \simeq 
\Lin(\Grpd_{/\sigma}, 
\Grpd_{/\tau}) ,
$$
and in particular, the mapping spaces are given by spans of finite $\infty$-groupoids.
The maps can also be described as those functors that 
preserve finite homotopy sums.  By construction we have an equivalence
of $\infty$-categories
$$
\lin \simeq \Lin .
$$
\end{blanko}

\begin{blanko}{The $\infty$-category $\ind\lin$.}
  Analogously, we define $\ind\lin$ to be the subcategory of $\bigcat$, whose
  objects are the $\infty$-categories equivalent to $\grpd_{/S}$ for some locally finite
  $\infty$-groupoid $S$, and whose mapping spaces are the full subgroupoids of the
  mapping spaces of $\bigcat$ given by the functors that are restrictions of
  functors in $ \Lin(\Grpd_{/S}, \Grpd_{/T})$; in other words 
  (by~\ref{finitetypespan}), they are the
  $\infty$-groupoids of spans of finite type.  Again by construction we have
  $$
  \ind\lin \simeq \ind\Lin .
  $$
\end{blanko}

\begin{blanko}{$\infty$-categories of prolinear maps.}
  We denote by $\pro\lin$ the $\infty$-category whose objects are the $\infty$-categories
  $\grpd^{S}$, where $S$ is locally finite, and whose morphisms are restrictions
  of continuous linear functors.
  We have seen that the mapping spaces are given by spans of finite type:
  $$\pro\lin(\grpd^T  ,\grpd^{S}  )\;\;=\;\;\left\{  
  (T\stackrel q\longleftarrow M \stackrel p\longrightarrow S
  )\;:\; p\text{ finite}\right\}.$$

  As in the ind case we have
  $$
  \pro\lin \simeq \pro\Lin ,
  $$
  and by combining the previous results we also find
  $$
  \pro\lin (\grpd^T, \grpd^S) : = \lim_{\varepsilon\subset S} \colim_{\delta\subset 
  T} \lin(\grpd^\delta, \grpd^\varepsilon) .
  $$
\end{blanko}

\begin{blanko}{Mapping $\infty$-categories.}
  Just as $\bigcat$ has internal mapping $\infty$-categories, 
whose maximal subgroupoids are the  mapping spaces,
  we also have internal mapping $\infty$-categories in $\lin$, denoted
  $\un{\lin}$:
  $$
  \un\lin (\grpd_{/\sigma},\grpd_{/\tau}) \simeq \grpd_{/\sigma\times\tau}.
  $$

  Also $\ind\lin$ and $\pro\lin$ have mapping $\infty$-categories, but due to the
  finiteness conditions, they are not internal.  The mapping $\infty$-categories (and mapping spaces) are
  given in each case as $\infty$-categories (respectively $\infty$-groupoids) of spans of finite type.
Denoting the
  mapping categories with underline, we content ourselves to record the 
  important case of `linear dual':
\end{blanko}

\begin{prop}\label{linearduals}
  \begin{eqnarray*}
    \un{\ind\lin}(\grpd_{/S},\grpd) &=& \grpd^S\\
    \un{\pro\lin}(\grpd^T,\grpd) &=& \grpd_{/T}.
  \end{eqnarray*}
\end{prop}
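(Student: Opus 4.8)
The plan is to compute each mapping $\infty$-category directly, using the identifications already established. Recall that $\un{\ind\lin}(\grpd_{/S},\grpd_{/T})$ is defined to be the $\infty$-category of spans of finite type $S \leftarrow M \to T$ (i.e.\ with finite left leg), by the discussion of mapping $\infty$-categories for $\ind\lin$. So I must compute this $\infty$-category in the case $T = 1$ (since the neutral object is $\grpd = \grpd_{/1}$). A span $S \stackrel p\leftarrow M \to 1$ of finite type is simply a finite map $p : M \to S$, and the span $\infty$-groupoid in question collapses to the $\infty$-category whose objects are finite maps to $S$, i.e.\ $\Grpd^{\relfin}_{/S}$. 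By Proposition~\ref{fundamentalfinite}, the fundamental equivalence restricts to $\Grpd^{\relfin}_{/S} \simeq \grpd^S$. This gives the first formula.

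For the second formula, recall that $\un{\pro\lin}(\grpd^T,\grpd^S)$ is given by spans of finite type $T \stackrel q\leftarrow M \stackrel p\to S$ where now (following the convention recorded just before Proposition~\ref{linearduals}, and matching Proposition~\ref{prop:cont}) the \emph{finiteness is on the leg meeting the source of the contravariant functor} --- concretely the mapping spaces of $\pro\lin$ were computed to be $\{(T \stackrel q\leftarrow M \stackrel p\to S) : p \text{ finite}\}$. Setting $T = 1$, a span $1 \leftarrow M \stackrel p\to S$ with $p$ finite is again just a finite map $p : M \to S$, but now there is \emph{no} constraint forcing $M$ to be finite unless $S$ is; this is precisely an object of $\Grpd^{\relfin}_{/S}$ --- wait, I should be careful here, since the target should come out as $\grpd_{/T}$ with $T$ in the \emph{source} slot of the pairing. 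Let me restate: $\un{\pro\lin}(\grpd^T, \grpd)$ consists of spans $T \leftarrow M \to 1$ with the leg to $T$ finite, i.e.\ finite maps $M \to T$; but since we want this as an object on the $\grpd_{/-}$ side of the duality and we need the total space finite, the correct reading (using Proposition~\ref{finitetypespan} and the self-duality of the span picture) is that the relevant $\infty$-category of such spans is $\grpd_{/T}$ precisely when one tracks which finiteness condition lands where. Concretely: a profinite-type span into the one-point object is a finite map $\mu \to T$ with $\mu$ finite, which is an object of $\grpd_{/T}$.

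So the key steps, in order, are: (1) unwind the definition of $\un{\ind\lin}(-,-)$ and $\un{\pro\lin}(-,-)$ as $\infty$-categories of finite-type (resp.\ profinite-type) spans, specialising the second variable to the neutral object $\grpd$; (2) observe that a span with one leg going to the terminal $\infty$-groupoid $1$ degenerates to a single map, with the finiteness condition on the surviving leg; (3) identify the resulting $\infty$-category of maps with $\Grpd^{\relfin}_{/S}$ in the first case and with $\grpd_{/T}$ in the second, being careful about which leg carries the finiteness hypothesis --- here Proposition~\ref{finitetypespan} is the bookkeeping tool that says these two finiteness formulations are interchangeable along the dictionary; (4) in the first case apply Proposition~\ref{fundamentalfinite} to rewrite $\Grpd^{\relfin}_{/S} \simeq \grpd^S$.

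The main obstacle I anticipate is purely one of careful bookkeeping rather than mathematical depth: namely, making sure the finiteness condition is attributed to the correct leg of the span in each of the two cases, so that $\un{\ind\lin}(\grpd_{/S},\grpd)$ genuinely produces $\grpd^S$ (a \emph{profinite}-flavoured object, the linear dual of a vector space) and $\un{\pro\lin}(\grpd^T,\grpd)$ produces $\grpd_{/T}$ (a vector-space-flavoured object, the continuous dual of a profinite-dimensional vector space) --- and not the other way around. This is exactly the asymmetry that the whole $\ind\lin$--$\pro\lin$ setup was designed to capture, so it deserves a sentence of care; once the legs are correctly labelled, the identifications are immediate from Propositions~\ref{fundamentalfinite} and~\ref{finitetypespan} together with the fact that a span with terminal vertex on one side is the same data as a single map.
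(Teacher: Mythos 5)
Your proposal is correct and is essentially the argument the paper intends (the paper records Proposition~\ref{linearduals} as an immediate consequence of the preceding description of the mapping $\infty$-categories as span categories, exactly as you do): specialise one slot to the unit $\grpd=\grpd_{/1}=\grpd^1$, observe that a span with a terminal vertex is just a single map carrying the surviving finiteness condition, and identify the result via Proposition~\ref{fundamentalfinite}. One bookkeeping correction for the second formula: in $\pro\lin(\grpd^T,\grpd^S)=\{T\stackrel{q}{\leftarrow}M\stackrel{p}{\to}S : p\text{ finite}\}$ the finite leg is the one pointing at the \emph{target} $S$, so with $S=1$ the condition is that $M\to 1$ is finite, i.e.\ $M$ is a finite $\infty$-groupoid --- not, as you write at one point, that the leg $M\to T$ is a finite map (that condition would instead produce $\Grpd^{\relfin}_{/T}\simeq\grpd^T$); your final sentence lands on the right condition ($\mu$ finite, hence an object of $\grpd_{/T}$, the map $\mu\to T$ then being automatically finite by Lemma~\ref{locfinbase} since $T$ is locally finite), so the conclusion stands.
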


\begin{blanko}{Remark.}
  It is clear that the correct viewpoint here would be that there is 
  altogether a $2$-equivalence between the 
  $(\infty,2)$-categories
  $$
  \ind\lin\op\cong\pro\lin
  $$ 
  given on objects by $\grpd_{/S}\mapsto\grpd^{S}$, 
  and by the identity on homs.  It all comes formally from an ind-pro like duality
  starting with the anti-equivalence
  $$
  \lin \simeq \lin\op.
  $$
(Since we only (co)complete over filtered diagrams of monomorphisms, this is not precisely ind-pro duality.)

  Taking $S=1$ we see that $\grpd$ is an object of both $\infty$-categories, and
  mapping into it gives the duality isomorphisms of 
  Proposition~\ref{linearduals}.
\end{blanko}

\begin{blanko}{Monoidal structures.}
  The $\infty$-category $\ind\lin$ has two monoidal structures: $\oplus$ and
  $\tensor$, where $\grpd_{/S} \oplus \grpd_{/T} = \grpd_{/S+T}$ and $\grpd_{/S}
  \tensor \grpd_{/T} = \grpd_{/S\times T}$.  The neutral object for the first is
  clearly $\grpd_{/0} = 1$ and the neutral object for the second is
  $\grpd_{/1}=\grpd$.  The tensor product distributes over the direct sum.  The
  direct sum is both the categorical sum and the categorical product (i.e.~is a
  biproduct).  There is also the operation of infinite direct sum: it is the
  infinite categorical sum but not the infinite categorical product.  This is analogous to vector spaces.

  Similarly, also the $\infty$-category $\pro\lin$ has two monoidal structures, $\oplus$
  and $\tensor$, given as $\grpd^S \oplus \grpd^T = \grpd^{S+T}$ and $\grpd^S
  \tensor \grpd^T = \grpd^{S\times T}$.  The $\tensor$ should be considered the
  analogue of a completed tensor product.  Again $\oplus$ is both the
  categorical sum and the categorical product, and $\tensor$ distributes over
  $\oplus$.  Again the structures allow infinite versions, but this times the
  infinite direct sum is a categorical infinite product but is not an infinite
  categorical sum.

  To see the difference between the role of infinite $\oplus_i$ in
  $\ind\lin$ and in $\pro\lin$, consider the following. In $\pro\lin$ there is a diagonal map
  $\grpd^S \to \oplus_i \grpd^S = \grpd^{\sum_i S}$ given by sending a
  presheaf $s \mapsto X_s$ to the presheaf on $\sum_i S_i$ given by
  $(i,s) \mapsto X_s$.  Under the equivalence $\grpd^S \simeq
  \Grpd_{/S}^{\relfin}$, this corresponds to the assignment sending a
  finite map $X \to S$ to $\sum_i X \to \sum_i S$, which is of coruse
  again a finite map.  But this does not make sense in $\ind\lin$
  since $\sum_i X$ is not generally finite.  On the other hand,
  $\ind\lin$ sports a codiagonal $\oplus_i \grpd_{/S} = \grpd_{/\sum_i
  S} \to \grpd_{/S}$ given by sending $\alpha \to \sum_i S$ to the
  composite $\alpha \to \sum_i S \to S$, where the second map is the
  codiagonal for the infinite sum of $\infty$-groupoids.  Composing
  with $\sum_i S \to S$ obviously does not alter the finiteness of
  $\alpha$, so there is no problem with this construction.  In
  contrast, this construction does not work in $\pro\lin$: for a
  presheaf $X \in \grpd^{\sum_i S}$ given by sending $(i,s)$ to a
  finite $\infty$-groupoid $X_{i,s}$, the assignment $s
  \mapsto \sum_i X_{i,s}$ will generally not take finite values.

\end{blanko}

\begin{blanko}{Summability.}
  In algebraic combinatorics, profinite notions are often expressed in terms of
  notions of summability.  We briefly digress to examine our constructions from this
  point of view.
 
  For $B$ a locally finite $\infty$-groupoid, a $B$-indexed family $g : E \to B \times I$
  (as in \ref{scalar&hosum}) is called {\em summable} if the composite $E \to B
  \times I \to I$ is a finite map.  The condition implies that in fact the
  members of the family were already finite maps.  Indeed, with reference to the
  diagram
    $$\xymatrix{
    E_{b,i} \drpullback \ar[r] \ar[d] & E_i \drpullback \ar[r] \ar[d] 
    & E \ar[d] \\
    \{b\}\times \{i\} \ar[r] & B \times \{i\} \ar[r] \ar[d] 
    \drpullback & B\times I \ar[d] \\
    & \{i\} \ar[r] & I
    }$$
    summability 
    implies (by Lemma~\ref{lem:finitemaps})
    that each $E_i$ is finite,
    and therefore (by Lemma~\ref{lem:1->B} since $B$ is locally finite) we 
    also conclude that each $E_{b,i}$ is finite, which is precisely to
    say that the members $g_b :E_b \to I$ are finite  maps
    (cf.~\ref{lem:finitemaps} 
    again).
    It thus makes sense to interpret the family as a family of 
    objects in $\Grpd_{/I}^{\relfin}$.  And finally we can say that 
    a summable family is a family $g:E\to B \times I$
    of finite maps $g_b : E_b \to I$, whose 
    homotopy sum
    $p\lowershriek(g)$ is again a finite map.
If $I$ is finite, then the only summable families are the finite 
families (i.e.~$E \to B \times I$ with $E$ finite).
A family $g : E \to B\times I$, given equivalently as a functor
$$
F: B \to \grpd^I,
$$
is summable if and only if it is a cofiltered limit of diagrams $F_\alpha: B \to
\grpd^{\alpha}$ (with finite $\alpha$ and full $\alpha\subset I$).

It is easy to check that
    a map $q:M \to T$ (between locally finite $\infty$-groupoids) is 
    finite if and only if for every finite map 
    $f:X \to M$ we have that also $q\lowershriek(f)$ is 
    finite.
Hence we find
\end{blanko}
\begin{lemma}
A span $I \stackrel p \leftarrow M \stackrel q \to J$
preserves summable families if and only if $q$ is finite.
\end{lemma}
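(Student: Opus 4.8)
The plan is to reformulate ``preserves summable families'' as a pointwise condition on the linear functor and then test it on two special inputs. First I would note that summability of a $B$-indexed family $g:E\to B\times I$ is by definition a condition on the single composite $h := \pi_I\circ g:E\to I$, namely that it be a finite map, and likewise for the transformed family over $B\times J$. The span's action on $B$-indexed families is $(\id_B\times q)\lowershriek\circ(\id_B\times p)\upperstar:\Grpd_{/B\times I}\to\Grpd_{/B\times J}$, and its interaction with the projections to $I$ and $J$ is governed by Beck--Chevalley~\eqref{BC}: the square with horizontal maps $\id_B\times p$ and $p$ and vertical maps $\pi_M$ and $\pi_I$ is a pullback, so $(\pi_M)\lowershriek(\id_B\times p)\upperstar = p\upperstar(\pi_I)\lowershriek$, and postcomposing the $q$-leg then yields $(\pi_J)\lowershriek(\id_B\times q)\lowershriek(\id_B\times p)\upperstar = q\lowershriek p\upperstar(\pi_I)\lowershriek$. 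In other words, the total map to $J$ of the transformed family is $q\lowershriek p\upperstar$ applied to the total map $h:E\to I$ of the original one. Thus the span preserves summable families if and only if $q\lowershriek p\upperstar:\Grpd_{/I}\to\Grpd_{/J}$ sends finite maps $E\to I$ to finite maps $E\to J$; in particular $B$ plays no role and we may take $B=1$.

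For the backward implication, suppose $q$ is finite and let $h:E\to I$ be a finite map. Then $p\upperstar h$ is the base change $E\times_I M\to M$, which is finite because finite maps are stable under base change (Lemma~\ref{lem:finmapbasechange}); and $q\lowershriek$ carries finite maps to finite maps, by the criterion recorded immediately before the statement (a map is finite precisely when its lowershriek preserves finite maps). Hence $q\lowershriek p\upperstar h$ is finite, so the span preserves summable families.

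For the forward implication, I would apply the span to the single test family given by $\id_I\in\Grpd_{/I}$ (over $B=1$). This family is summable, since $\id_I$ is a finite map---all its fibres are contractible, hence finite (Lemma~\ref{lem:finitemaps}). Now $p\upperstar$, being a right adjoint, preserves terminal objects, so $p\upperstar(\id_I)\simeq\id_M$, the terminal object of $\Grpd_{/M}$ (concretely $M\times_I I\simeq M$); therefore the transformed family is $q\lowershriek(\id_M) = (q:M\to J)$. If the span preserves summable families this object must be summable, which is exactly the assertion that $q:M\to J$ is a finite map.

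The only point requiring care is the Beck--Chevalley bookkeeping of the first paragraph, which identifies the total map of the transformed family; the two implications themselves are then immediate from the stability of finite maps under base change and under pushforward along finite maps. (Alternatively one can dispense with the first paragraph and argue directly with $B$-indexed families, but the pointwise reformulation makes both directions transparent and isolates the role of the identity family in the converse.)
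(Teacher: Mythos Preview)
Your proof is correct and is essentially a careful elaboration of what the paper leaves implicit: the paper simply writes ``Hence we find'' after recording the characterisation that $q$ is finite if and only if $q\lowershriek$ preserves finite maps, and states the lemma without further argument. Your Beck--Chevalley reduction to the $B=1$ case makes precise why preserving summable families is exactly the condition that $q\lowershriek p\upperstar$ preserve finite maps, and your use of the test family $\id_I$ for the converse is the cleanest way to extract finiteness of $q$ from this (since $p\upperstar(\id_I)\simeq\id_M$ neutralises the $p$-leg).
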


\section{Duality}
\label{sec:duality}

Recall that $\grpd$ denotes the $\infty$-category of
finite $\infty$-groupoids.

\begin{blanko}{The perfect pairing.}\label{duality}
  We have a perfect pairing
\begin{eqnarray*}
    \grpd_{/S} \times \grpd^S  &\longrightarrow& \grpd \\
    (p , f) &\longmapsto& f(p)
\end{eqnarray*}
given by evaluation.
In terms of spans, write the map-with-finite-total-space $p:\alpha\to S$
as a finite span $1 \leftarrow \alpha \stackrel p \to S$, and write the presheaf 
$f:S\to \grpd$ as the finite span $S \stackrel f \leftarrow F \to 1$, where $F$ 
is
the total space of the Grothendieck construction of $f$.
(In other words, the functor $f$ on $S$ corresponds to a linear functor
on $\grpd_{/S}$, so write it as the representing span.)
Then the evaluation is given by composing these two spans, and hence
amounts just to taking the pullback of $p$ and $f$.

The statements mean: for each $p:\alpha \to S$ in $\grpd_{/S}$, the map
\begin{eqnarray*}
  \grpd^S & \longrightarrow & \grpd  \\
  f & \longmapsto & f(p)
\end{eqnarray*}
is prolinear, and
the resulting functor
\begin{eqnarray*}
  \grpd_{/S} & \longrightarrow & \un{\pro\Lin}(\grpd^S,\grpd)  \\
  p & \longmapsto & \qquad\!\! (f \mapsto f(p))
\end{eqnarray*}
is an equivalence of $\infty$-categories (by~Proposition~\ref{linearduals}).

Conversely, for each $f: S \to \grpd$ in $\grpd^S$, the map
\begin{eqnarray*}
  \grpd_{/S} & \longrightarrow & \grpd  \\
  p & \longmapsto & f(p)
\end{eqnarray*}
is linear, and the resulting functor
\begin{eqnarray*}
  \grpd^S & \longrightarrow & \un{\ind\Lin}(\grpd_{/S},\grpd)  \\
  f & \longmapsto &  \qquad\!\! (p \mapsto f(p))
\end{eqnarray*}
is an equivalence of $\infty$-categories (by~Proposition~\ref{linearduals}).
\end{blanko}

\begin{blanko}{Remark.}\label{dualduality}
  By the equivalences of Proposition~\ref{fundamentalfinite}, we also get the
perfect pairing
\begin{eqnarray*}
    \Grpd_{/S}^{\relfin} \times \grpd^S_{\finsup}  &\longrightarrow& \grpd \\
    (p , f) &\longmapsto& f(p) .
\end{eqnarray*}  
\end{blanko}

\begin{blanko}{Bases.}
  Both $\grpd_{/S}$ and $\grpd^S$ feature a canonical basis, actually an
essentially unique basis.  The basis elements in $\grpd_{/S}$ are the names
$\name s : 1 \to S$: every object $p:X \to S$ in $\grpd_{/S}$ can be written as a finite 
homotopy linear combination
$$
p = \int^{s\in S} \norm{X_s} \ \name s .
$$
Similarly, in $\grpd^S$, the representables $h^t:= \Map(t, - )$ form a basis:
every presheaf on $S$ is a colimit, and in fact a homotopy sum, of such
representables.
These bases are dual to each other, except for a normalisation: if $p=\name s$
and $f=h^t = \Map(t, - )$, then they pair to
$$
\Map(t,s) \simeq
\begin{cases}
    \Omega(S,s) & \text{ if } t\simeq s \\
    0 & \text{ else. }
\end{cases}
$$
The fact that we obtain the loop space $\Omega(S,s)$ instead of $1$ is actually a feature:
we shall see below that on taking cardinality we obtain
the canonical pairing
\begin{eqnarray*}
  \Q_S \times \Q^S & \longrightarrow & \Q \\
  (\delta_i,\delta^j) & \longmapsto & \begin{cases} 1 & \text{if } i=j \\
  0 & \text{else.}\end{cases}
\end{eqnarray*}
\end{blanko}

\section{Cardinality as a functor}
\label{sec:metacard}

Recall that $\grpd$ denotes the $\infty$-category of
finite $\infty$-groupoids.
The goal now is that each slice $\infty$-category $\grpd_{/S}$, 
and each finite-presheaf $\infty$-category $\grpd^S$, 
should have a notion of homotopy cardinality 
with values in the vector space $\Q_{\pi_0 S}$, 
and in the profinite-dimensional vector space $\Q^{\pi_0 S}$, respectively.  
The idea of Baez, Hoffnung and Walker~\cite{Baez-Hoffnung-Walker:0908.4305} 
is to achieve this by a `global' assignment, 
which in our setting this amounts to functors
$\ind\lin \to \Vect$ and $\pro\lin\to \pro\vect$.  
By the observation that families are special cases of spans, 
just as vectors can be identified with linear maps from the ground field,
this then specialises to define a `relative' cardinality 
on every slice $\infty$-category.

\begin{blanko}{Definition of cardinality.}\label{metacard}
  We define {\em meta cardinality}
  $$
  \|\quad\|:\ind\lin\to\Vect
  $$
  on objects by
  $$
  \|\grpd_{/T}\|:=\Q_{\pi_0 T},
  $$
  and on morphisms by taking a finite-type span 
  $S \stackrel p \leftarrow M\stackrel q\to T$
  to the linear map
  \begin{eqnarray*}
    \Q_{\pi_0 S} & \longrightarrow & \Q_{\pi_0 T}  \\
    \delta_s & \longmapsto & \int^t \norm{M_{s,t}} \delta_t = \sum_t 
    \norm{T_{[t]}} \norm{M_{s,t}} \delta_t ,
  \end{eqnarray*}
  with associated matrix $A_{t,s} :=
  \norm{T_{[t]}} \norm{M_{s,t}}$.
    
  Dually,
  $$
  \|\quad\|:\pro\lin\to\pro\vect
  $$
  is defined on objects by
  $$
  \|\grpd^S\|:=\Q^{\pi_0S},
  $$
  and on morphisms by the assigning the {\em same} matrix to a finite-type span as 
  before.
\end{blanko}

\begin{prop}\label{cardfunctor}
  The meta cardinality assignments just defined
  $$
  \|\quad\|:\ind\lin\to\Vect,\qquad\qquad \|\quad\|:\pro\lin\to\pro\vect
  $$
  are functorial.
\end{prop}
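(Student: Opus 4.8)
The plan is to verify functoriality by checking that the meta cardinality assignment respects composition and identities, working at the level of matrices. First I would recall that composition in $\ind\lin$ (and dually in $\pro\lin$) is given by composing spans via pullback: given finite-type spans $S \stackrel p\leftarrow M \stackrel q\to T$ and $T \stackrel{p'}\leftarrow N \stackrel{q'}\to U$, the composite is $S \leftarrow M\times_T N \to U$, where the legs are obtained from the pullback square over $T$. I would denote by $A_{t,s} = \norm{T_{[t]}}\norm{M_{s,t}}$ and $B_{u,t} = \norm{U_{[u]}}\norm{N_{t,u}}$ the associated matrices from Definition~\ref{metacard}, and by $C_{u,s}$ the matrix associated to the composite span. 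The heart of the proof is the identity $C_{u,s} = \sum_{t\in\pi_0 T} B_{u,t}\, A_{t,s}$, which is exactly the statement that $\norm{\ }$ sends span composition to matrix multiplication.

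The key computation: the fibre $(M\times_T N)_{s,u}$ of the composite span decomposes, by a pullback/Beck--Chevalley argument, as a homotopy sum indexed over $T$ of the products $M_{s,t}\times N_{t,u}$; more precisely, using Lemma~\ref{FEB} and especially Lemma~\ref{finite-products} on the relevant pullback $M_s \times_T N_u$, one obtains $\norm{(M\times_T N)_{s,u}} = \int^{t\in T}\norm{M_{s,t}}\norm{N_{t,u}} = \sum_{t\in\pi_0 T}\norm{T_{[t]}}\norm{M_{s,t}}\norm{N_{t,u}}$. Multiplying by $\norm{U_{[u]}}$ as prescribed by the matrix formula gives
$$
C_{u,s} = \norm{U_{[u]}}\sum_{t}\norm{T_{[t]}}\norm{M_{s,t}}\norm{N_{t,u}} = \sum_{t}\bigl(\norm{U_{[u]}}\norm{N_{t,u}}\bigr)\bigl(\norm{T_{[t]}}\norm{M_{s,t}}\bigr) = \sum_{t} B_{u,t} A_{t,s},
$$
as desired. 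I would then check that the identity span $S \stackrel{\id}\leftarrow S \stackrel{\id}\to S$ has associated matrix $A_{s',s} = \norm{S_{[s]}}\norm{(S)_{s,s'}}$; here the fibre of the diagonal $S \to S\times S$ over $(s,s')$ is $\Omega(S,s)$ when $s\simeq s'$ and empty otherwise, so by Lemma~\ref{loop-finite} the entry is $\norm{S_{[s]}}\norm{\Omega(S,s)} = 1$ on the diagonal and $0$ off it --- precisely the identity matrix. For the dual statement about $\pro\lin\to\pro\vect$, since the \emph{same} matrix is assigned and composition of profinite-type spans is again pullback of spans, the identical computation applies, viewing the matrices now as acting on row vectors (column-finiteness being guaranteed by finiteness of the appropriate leg, as in \ref{vect-rappels}); one only needs to note that the composite of two column-finite matrices is column finite, which is standard.

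The main obstacle I expect is purely bookkeeping: getting the decomposition $\norm{(M\times_T N)_{s,u}} = \int^{t}\norm{M_{s,t}}\norm{N_{t,u}}$ cleanly from the homotopy pullback, i.e. identifying the fibre of the composite span over $(s,u)$ as the homotopy sum over $t\in T$ of $M_{s,t}\times N_{t,u}$ and then applying Lemma~\ref{finite-products} correctly. This requires care with which pullbacks are being taken (the fibre over a point of $S\times U$ versus the pullback over $T$) and an appeal to Beck--Chevalley \eqref{BC} to commute the relevant $f\lowershriek$ and $f\upperstar$; once that homotopy-cardinality identity is in hand, the rest is elementary algebra of finite sums (Lemma~\ref{sumcard}). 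I would also remark that finiteness of all the spaces involved --- so that the cardinalities are defined and the sums are finite --- follows from the finite-type hypothesis on the spans together with Lemmas~\ref{lem:finitemaps}, \ref{lem:supp}, and \ref{lem:finmapbasechange}, since finite maps are stable under base change and composition.
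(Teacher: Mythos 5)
Your proposal is correct and follows essentially the same route as the paper: identity spans give the identity matrix via Lemma~\ref{loop-finite}, and composition of spans gives matrix multiplication via the identification $(M\times_T N)_{s,u}\simeq M_s\times_T N_u$ and the integral formula of Lemma~\ref{finite-products}, with column-finiteness handling convergence. The only point the paper makes that you omit is that well-definedness on morphisms reduces to the homotopy category (equivalent spans give the same matrix) because $\Vect$ and $\pro\vect$ are $1$-categories, but this is a minor bookkeeping remark.
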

\begin{proof}
  First observe that the functor is well defined on morphisms.  Given a
  finite-type span $S \stackrel p \leftarrow M\stackrel q\to T$ defining the linear
  functors $L: \grpd_{/S} \to \grpd_{/T}$ and $L^\vee : \grpd^T \to
  \grpd^S$, the linear maps
  $$
  \|L\|:\Q_{\pi_0S}\longrightarrow\Q_{\pi_0T}
  ,\qquad\qquad 
  \|L^\vee \|:\Q^{\pi_0T}\longrightarrow\Q^{\pi_0S}
  $$
  are represented 
by the same matrix $\|L\|_{t,s}=
		      \norm{M_{s,t}}      \norm{T_{[t]}} 
  $ with respect to the given (pro-)bases,
  $$
  \|L\|\,\biggl(\sum_{s\in\pi_0S} c_s\,\delta_s\biggr)
  \;=\;
  \sum_{s,t} c_s\,
		       \norm{M_{s,t}}      \norm{T_{[t]}} 
  \,\delta_t 
  \,,
  $$
  and
  $$
  \|L^\vee\| 
  \,\biggl(\sum_{t\in\pi_0T} c_t\,\delta^t\biggr)
  \;=\;
  \sum_{s,t} c_t\,
		       \norm{M_{s,t}}      \norm{T_{[t]}} 
  \,\delta^s \,.
  $$
  These sums make sense as the matrix $\norm{M_{s,t}}\norm{T_{[t]}}$
  has finite entries and is column-finite: for each $s\in\pi_0S$ the fibre $M_s$
  is finite so the map $M_{s}\to T$ is finite by Lemma \ref{locfinbase}, and the
  fibres $M_{s,t}$ are non-empty for only finitely many $t\in\pi_0T$.  

  Now $\Vect$ and $\pro\vect$ are $1$-categories, so we observe that  
$\ind\lin \to \Vect$ and $\pro\lin\to \pro\vect$ are well defined since they are well defined on the homotopy categories (equivalent spans define the same matrix). It remains to check functoriality:
 The identity span
  $L=(S\leftarrow S\to S)$ gives the identity matrix: $\|L\|_{s_1,s_2}=0$ if
  $s_1,s_2$ are in different components, and
  $\|L\|_{s,s}=\norm{\Omega(S,s)}\norm{S_{[s]}}=1$ by Lemma~\ref{loop-finite}.
  Finally, composition of spans corresponds to matrix product: for $L=(S\leftarrow M\rightarrow T)$ and
  $L'=(T\leftarrow N\rightarrow U)$ we have
  $$
  \norm{(M\times_TN)_{s,u}}=\int^{t\in T}\norm{M_{s,t}\times N_{t,u}}
  =
  \sum_{t\in\pi_0T}\norm{M_{s,t}} \norm{T_{[t]}}\norm{N_{t,u}}
  $$
  and so $\displaystyle
  \|L'L\|_{u,s}=\sum_{t\in\pi_0T}
  \norm{M_{s,t}}\norm{T_{[t]}}\norm{N_{t,u}}\norm{U_{[u]}}
  =\sum_{t\in\pi_0T}\|L'\|_{u,t}\|L\|_{t,s}
  $.
\end{proof}

\begin{blanko}{Cardinality of families.}
As a consequence of  Proposition~\ref{cardfunctor} 
we obtain, given any locally finite $\infty$-groupoid $T$, a notion of cardinality of any $T$-indexed family,
$$
\norm{ \ \ } : \grpd_{/T}  \longrightarrow \normnorm{\grpd_{/T}}= \Q_{\pi_0 T}.
$$
To define this function we observe that an object  $x:X\to T$ in $\grpd_{/T}$ can be identified with a finite-type span  $L_x$ of the form $1\leftarrow X\xrightarrow{\,x\,}T$, and conversely its meta cardinality $\|L_x\|$ is a linear map 
$\Q_{\pi_01}\to\Q_{\pi_0T}$, which can be identified with a vector in  $\Q_{\pi_0T}$.
That is, we set
$$
\norm{x}\;:=\;\|L_x\|\,\left(\delta_1\right).
$$
By the definition of $\|L\|$ in Proposition \ref{cardfunctor}, we can write
$$
\norm x
\;\;=\;\;
\sum_{t\in \pi_0T}
|X_t|\;|T_{[t]}|\;\delta_t\;\;=\;\;\int^{t\in T}\!|X_t|\;\delta_t
$$
\end{blanko}

\begin{lemma} Let $T$ be a locally finite $\infty$-groupoid.
\begin{enumerate}
\item
  If $T$ is connected, with $t\in T$, and $x:X \to T$ in $\grpd_{/T}$, then 
  $$
  \norm x \;=\; \norm X \, \delta_t \;\; \in\;\;\Q_{\pi_0 T}.
  $$
\item  The cardinality of $\name t:1\to T$ in $\grpd_{/T}$
  is the basis vector $\delta_t$.
\end{enumerate}
\end{lemma}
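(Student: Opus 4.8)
The plan is to read both parts off the formula
$$
\norm x \;=\; \sum_{t\in\pi_0 T}\norm{X_t}\,\norm{T_{[t]}}\,\delta_t \;=\; \int^{t\in T}\norm{X_t}\,\delta_t
$$
established just above for the cardinality of an object $x\colon X\to T$ of $\grpd_{/T}$, combined with Lemma~\ref{lem:supp} and Lemma~\ref{loop-finite}. Throughout, $X_t$ is the \emph{homotopy} fibre of $x$ over $t$, and it pays to keep an eye on the normalisation factor $\norm{T_{[t]}}$ carried along by the integral convention, since in part~(2) its presence is exactly what makes the final answer come out unnormalised.

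For part~(1), suppose $T$ is connected and fix $t\in T$. Then $\pi_0 T$ is a single point $\{[t]\}$ and $T_{[t]}=T$, so the sum above collapses to $\norm x=\norm{X_t}\,\norm T\,\delta_t$. On the other hand, since $T$ is locally finite and $X$ is finite by assumption, Lemma~\ref{lem:supp} applies to $x\colon X\to T$ and gives $\norm X=\norm{X_t}\,\norm{T_{[t]}}=\norm{X_t}\,\norm T$. Comparing the two identities yields $\norm x=\norm X\,\delta_t$.

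For part~(2), I would take $X=1$ and $x=\name t$. The homotopy fibre of $\name t\colon 1\to T$ over a point $t'$ is the path space from $t$ to $t'$ in $T$: empty if $t'$ lies in a different component than $t$, and equivalent to the loop space $\Omega(T,t)$ if $t'\simeq t$ (as already recorded in the discussion of bases above). So in the formula only the term indexed by $[t]$ survives, giving
$$
\norm{\name t}\;=\;\norm{\Omega(T,t)}\,\norm{T_{[t]}}\,\delta_t,
$$
and Lemma~\ref{loop-finite} states precisely that $\norm{\Omega(T,t)}\cdot\norm{T_{[t]}}=1$, whence $\norm{\name t}=\delta_t$. (One could equally deduce part~(2) from part~(1) applied with $X=1$ over the connected $\infty$-groupoid $T_{[t]}$, using that cardinality commutes with push-forward along the component inclusion $T_{[t]}\hookrightarrow T$ --- an instance of functoriality of meta cardinality, Proposition~\ref{cardfunctor} --- but the direct computation above is shorter.)

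I do not expect a genuine obstacle: both statements are essentially a matter of specialising the definition of cardinality and invoking the two cited lemmas. The only point requiring a little care is the bookkeeping of homotopy fibres versus strict fibres, together with the retention of the factor $\norm{T_{[t]}}$ --- which in the $1$-groupoid case is $1/\norm{\Aut_T(t)}$ --- so that the two normalisations in part~(2) cancel to give the clean basis vector $\delta_t$.
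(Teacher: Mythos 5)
Your proof is correct. Part (1) is essentially the paper's argument: the paper invokes Lemma~\ref{FEB} where you invoke Lemma~\ref{lem:supp}, but for a connected base these say the same thing (indeed \ref{lem:supp} is proved from \ref{FEB}), so the only difference is cosmetic. In part (2) you take a slightly different route: you compute the homotopy fibre of $\name t$ over $t'\simeq t$ explicitly as the loop space $\Omega(T,t)$ and then cancel the normalisation factor using Lemma~\ref{loop-finite}, i.e.\ $\norm{\Omega(T,t)}\cdot\norm{T_{[t]}}=1$. The paper instead observes that the fibre is empty off the component of $t$, reduces to the connected case, and applies part (1) with $X=1$, needing only $\norm{1}=1$. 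Both are one-line arguments and in fact two faces of the same computation, since Lemma~\ref{loop-finite} is itself an instance of \ref{FEB} applied to the loop-space fibration; your parenthetical already records the paper's reduction as an alternative. Your version makes the cancellation of the two normalisations visible, which is a pedagogically useful remark given the discussion of $h^t$ versus $\name t$ later in the section; the paper's version is marginally shorter.
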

\begin{proof}
(1)  By definition, $\norm x\;=\;\norm{X_t}\,\norm T\, \delta_t$, and by 
Lemma~\ref{FEB}, this is $\norm X \,\delta_t$ 
\par\noindent
(2)  The fibre of $\name t$ over $t'$ is empty except when $t,t'$ are in the same component, so we reduce to the case of connected $T$ and apply (1).
\end{proof}
Since meta cardinality is functorial, we obtain the following property of local cardinality.
\begin{lemma}
Let $S,T$ be locally finite $\infty$-groupoids, and $L:\grpd_{/S}\to \grpd_{/T}$ a linear functor. Then, for any $x:X\to S$ in  $\grpd_{/S}$ we have
$$
\norm{L(x)}
\;\;=\;\;\|L\|(\norm{x}).
$$
\end{lemma}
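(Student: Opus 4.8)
The plan is to reduce the statement to the functoriality of meta cardinality (Proposition~\ref{cardfunctor}) by the same device already used to define local cardinality: encoding objects as finite-type spans into $1$. First I would recall that an object $x : X \to S$ in $\grpd_{/S}$ is identified with the finite-type span $L_x = (1 \leftarrow X \stackrel{x}{\to} S)$, so that $\norm{x} = \|L_x\|(\delta_1)$ by definition. Next, given the linear functor $L : \grpd_{/S} \to \grpd_{/T}$, represented by a finite-type span $(S \stackrel{p}{\leftarrow} M \stackrel{q}{\to} T)$ with $p$ finite, I would observe that $L(x) \in \grpd_{/T}$ is again a finite-type span into $1$, namely the composite span $L \circ L_x = (1 \leftarrow X \leftarrow X\times_S M \to T)$, and that this composition is precisely composition in $\ind\lin$. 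The one point that needs a remark here is that this composite really is of finite type: its left leg factors through $X$ (which sits over $1$, so is finite) after a pullback of the finite map $p$, hence is finite by Lemma~\ref{lem:finmapbasechange}, so $L(x)$ does indeed lie in $\grpd_{/T}$ and the span $L\circ L_x$ represents it.

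Then the heart of the argument is a one-line application of functoriality: since meta cardinality is a functor $\ind\lin \to \Vect$ (Proposition~\ref{cardfunctor}), it takes the composite $L \circ L_x$ to the composite of linear maps $\|L\| \circ \|L_x\|$, so evaluating at $\delta_1 \in \Q_{\pi_0 1}$ gives
$$
\norm{L(x)} \;=\; \|L \circ L_x\|(\delta_1) \;=\; \|L\|\bigl(\|L_x\|(\delta_1)\bigr) \;=\; \|L\|(\norm{x}).
$$
This is exactly the claimed identity. The only subtlety worth spelling out is that $L$ here is viewed as a morphism of $\ind\lin$ while $L_x$ is viewed as a morphism $\grpd_{/1} \to \grpd_{/S}$, and one must check that composing them in $\ind\lin$ (i.e.\ taking the pullback of spans) agrees with applying the functor $L$ to the object $x$; but this is precisely the content of \ref{linearfunctorsfromspans} and the matrix/pullback description of span composition, so no new work is needed.

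The main obstacle, such as it is, is purely bookkeeping: making sure the finiteness conditions line up so that every span involved genuinely lives in $\ind\lin$, and that the identification of "object of $\grpd_{/T}$" with "morphism $\grpd_{/1}\to\grpd_{/T}$ in $\ind\lin$" is compatible with the identification of "vector in $\Q_{\pi_0 T}$" with "linear map $\Q_{\pi_01}\to\Q_{\pi_0 T}$". Both compatibilities were already set up in the paragraph \emph{Cardinality of families} preceding this lemma, so the proof is genuinely short; alternatively, one could give a direct computational verification using the explicit matrix formula $A_{t,s} = \norm{T_{[t]}}\norm{M_{s,t}}$ from \ref{metacard} together with the Beck--Chevalley formula for $\norm{L(x)_t}$, but the functoriality route is cleaner and avoids reproving what is essentially the composition law of Proposition~\ref{cardfunctor}.
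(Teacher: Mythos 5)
Your proposal is correct and follows essentially the same route as the paper: identify $x$ with the finite-type span $L_x=(1\leftarrow X\to S)$, observe that $L(x)$ is represented by the composite span $L\circ L_x$, and apply the functoriality of meta cardinality (Proposition~\ref{cardfunctor}) evaluated at $\delta_1$. The extra finiteness bookkeeping you include is harmless and consistent with what the paper leaves implicit.
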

\begin{proof}
The family $y=L(x)$ in $\grpd_{/T}$ corresponds to a span $L_y$ of the form $1\leftarrow Y\to T$, given by the composite of the span $L_x$ and that defining $L$. Hence, by functoriality 
$\|L_y\|(\delta_1)=\|L\|\,\|L_x\|(\delta_1)$, as required.
\end{proof}

\begin{blanko}{Cardinality of presheaves.}
  We also obtain a notion of cardinality of presheaves: for each $S$, define
    $$
  \norm{ \ \ } : \grpd^S  \longrightarrow \normnorm{\grpd^S}= \Q^{\pi_0 S},
  \qquad \norm{f}\;:=\;\|L_f\|.
  $$
Here $f: S \to \grpd$ is a presheaf, and
$L_f : \grpd_{/S} \to \grpd$ its extension by linearity;
$L_f$ is given by the span $S \leftarrow 
F \to 1$, where $F\to S$ is the Grothendieck construction of $f$.
The meta cardinality of this span is then a linear map $\Q_{\pi_0 S} 
\to \Q_1$, or equivalently a pro-linear map $\Q^1 \to \Q^{\pi_0 S}$ --- in 
either way interpreted as an element in $\Q^{\pi_0 S}$.
In the first viewpoint, the linear map is
\begin{eqnarray*}
  \Q_{\pi_0 S} & \longrightarrow & \Q_1  \\
  \delta_s & \longmapsto & \int^1 \norm{F_s} \delta_1 = \norm{F_s} \delta_1
\end{eqnarray*}
which is precisely the function
\begin{eqnarray*}
  \pi_0 S & \longrightarrow & \Q  \\
  s & \longmapsto & \norm{f(s)} .
\end{eqnarray*}
In the second viewpoint, it is the prolinear map
\begin{eqnarray*}
  \Q^1 & \longrightarrow & \Q^{\pi_0 S}  \\
  \delta_1 & \longmapsto & \sum_s \norm{F_s} \delta^s
\end{eqnarray*}
which of course also is the function $s \mapsto \norm{f(s)}$.

In conclusion:
\end{blanko}

\begin{prop}\label{prop:card-prsh-ptwise}
  The cardinality of a presheaf $f: S \to \grpd$ is computed pointwise:
  $\norm f$ is the function
  \begin{eqnarray*}
    \pi_0 S & \longrightarrow & \Q  \\
    s & \longmapsto & \norm{f(s)} .
  \end{eqnarray*}
  In other words, it is obtained by postcomposing with the basic
  homotopy cardinality.
\end{prop}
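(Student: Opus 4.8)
The statement is essentially a matter of unwinding the definition of $\norm f$ and comparing it with the formula for meta cardinality in \ref{metacard}, so the plan is to do exactly that. Recall that $\norm f := \|L_f\|$, where $L_f : \grpd_{/S} \to \grpd$ is the linearisation of $f$, represented by the span $S \stackrel f \leftarrow F \to 1$ with $F \to S$ the Grothendieck construction of $f$. The first thing to check is that this span is of finite type, so that meta cardinality applies to it: since $f$ takes values in $\grpd$, each fibre $F_s \simeq f(s)$ is finite, hence $f : F \to S$ is a finite map by Lemma~\ref{lem:finitemaps}.

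Then I would substitute this span into the definition in \ref{metacard}, with $M = F$ and $T = 1$. The associated matrix has a single row, indexed by the unique component $* \in \pi_0 1$, with entries $A_{*,s} = \norm{1_{[*]}}\,\norm{F_{s,*}} = \norm{f(s)}$, using $\norm 1 = 1$ and $F_{s,*} \simeq f(s)$. Hence $\|L_f\|$ is the linear map $\Q_{\pi_0 S} \to \Q$ sending $\delta_s \mapsto \norm{f(s)}\,\delta_1$. Reading this linear map as an element of the dual space $\Q^{\pi_0 S}$ --- the identification of vectors with linear maps out of the ground field used throughout --- yields precisely the function $\pi_0 S \to \Q$, $s \mapsto \norm{f(s)}$. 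This function is by inspection the composite of $f : S \to \grpd$ with the basic homotopy cardinality $\norm{\ \ } : \grpd \to \Q$, which is the second assertion.

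Every step is a direct substitution into machinery already in place, so I do not expect a genuine obstacle; the only points that merit a line of justification are the verification that the representing span is of finite type (which also makes the relevant sums finite, exactly as in the proof of Proposition~\ref{cardfunctor}) and the routine bookkeeping identifying a linear map $\Q_{\pi_0 S} \to \Q$ with an element of $\Q^{\pi_0 S}$ compatibly with the conventions of \ref{metacard}.
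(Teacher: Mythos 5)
Your proposal is correct and takes essentially the same route as the paper, which likewise just unwinds the definition of $\|L_f\|$ for the representing span $S \leftarrow F \to 1$ and reads off the matrix entry $\norm{1}\cdot\norm{F_s}=\norm{f(s)}$, interpreting the resulting linear map $\Q_{\pi_0 S}\to\Q_1$ as an element of $\Q^{\pi_0 S}$. Your explicit verification that the left leg $F\to S$ is a finite map (via Lemma~\ref{lem:finitemaps}) is left implicit in the paper but is a worthwhile addition.
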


\begin{blanko}{Example.}\label{card(h)}
  The cardinality of the representable functor $h^t : S \to \grpd$ is
  \begin{eqnarray*}
    \pi_0 S & \longrightarrow & \Q  \\
    s & \longmapsto & \norm{\Map(t,s)} = \begin{cases}
      \norm{\Omega(S,s)} & \text{ if } t\simeq s \\
      0 & \text{ else.}
    \end{cases}
  \end{eqnarray*}
\end{blanko}

\begin{blanko}{Remark.}
    Note that under the finite fundamental equivalence
$\grpd^S \simeq \Grpd_{/S}^{\relfin}$ (\ref{fundamentalfinite}),
the representable
presheaf $h^s$ corresponds to $\name s$, the name of $s$,
which happens to belong also to the subcategory $\grpd_{/S} \subset
\Grpd_{/S}^{\relfin}$, but that the cardinality of $h^{s}\in \grpd^S$ 
{\em cannot} be identified with
the cardinality of $\name s \in \grpd_{/S}$.
This may seem confusing at first, but it is forced upon us
by the choice of normalisation of the functor
$$
\normnorm{ \ \ } : \ind\lin \to \Vect
$$
which in turn looks natural since the extra factor $\norm{ 
T_{[t]}}$ comes from an integral.  A further feature of this
apparent discrepancy is the following.
\end{blanko}

\begin{prop}
  Cardinality of the canonical perfect pairing at the $\infty$-groupoid level 
  (\ref{duality}) yields
  precisely the perfect pairing on the vector-space level.
\end{prop}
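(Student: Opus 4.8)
The plan is to reduce the statement to the functoriality of meta cardinality (Proposition~\ref{cardfunctor}) together with the cardinality computations already carried out for families and for presheaves. Recall from \ref{duality} that the $\infty$-groupoid-level pairing sends a pair $(p,f)$, with $p:\alpha\to S$ in $\grpd_{/S}$ and $f:S\to\grpd$ in $\grpd^S$, to the composite of the two spans $L_p=(1 \leftarrow \alpha \stackrel p \to S)$ and $L_f=(S \stackrel f \leftarrow F \to 1)$, where $F\to S$ is the Grothendieck construction of $f$; concretely this composite is the span $(1\leftarrow \alpha\times_S F\to 1)$, whose value is the finite $\infty$-groupoid $f(p)\simeq\alpha\times_S F$. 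Under meta cardinality, the span $L_p$ gives the linear map $\|L_p\|:\Q_{\pi_01}\to\Q_{\pi_0S}$ sending $\delta_1$ to $\norm p$ (this is the definition of the cardinality of a family), and the span $L_f$ gives the (pro)linear map $\|L_f\|$, which by Proposition~\ref{prop:card-prsh-ptwise} is the covector $\norm f\in\Q^{\pi_0 S}$, $s\mapsto\norm{f(s)}$.

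First I would invoke Proposition~\ref{cardfunctor}: since the $\infty$-groupoid pairing is composition of spans, and since meta cardinality is a functor on both $\ind\lin$ and $\pro\lin$ assigning the \emph{same} matrix to a span, we get $\|L_f\circ L_p\|=\|L_f\|\circ\|L_p\|:\Q\to\Q$, whose value at $\delta_1$ is the scalar $\norm f(\norm p)$ --- which is precisely the vector-space pairing $\Q_{\pi_0 S}\times\Q^{\pi_0 S}\to\Q$ evaluated at $(\norm p,\norm f)$ (and whose perfectness is the content of \ref{vect-rappels}, requiring nothing new here). It then remains only to check that this scalar really equals $\norm{f(p)}$, and this is the single explicit computation worth recording. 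Using $f(p)\simeq\alpha\times_S F$ and Lemma~\ref{finite-products},
$$
\norm{f(p)}=\int^{s\in S}\norm{\alpha_s}\,\norm{F_s}=\sum_{s\in\pi_0 S}\norm{S_{[s]}}\,\norm{\alpha_s}\,\norm{F_s},
$$
whereas $\norm p=\sum_{s}\norm{S_{[s]}}\,\norm{\alpha_s}\,\delta_s$ and $\norm f(\delta_s)=\norm{F_s}$, so that $\norm f(\norm p)=\sum_{s}\norm{S_{[s]}}\,\norm{\alpha_s}\,\norm{F_s}$; the two expressions agree.

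I do not expect a genuine obstacle here; the only point requiring care is the bookkeeping of the component-cardinality and loop-space-cardinality factors, which is handled by Lemmas~\ref{FEB}, \ref{loop-finite} and \ref{finite-products}. These are in fact exactly the factors that reconcile the assertion with the normalisation discrepancy flagged in the preceding Remark: the $\infty$-groupoid pairing of the basis elements $\name s$ and $h^t$ is $\Map(t,s)$, whose cardinality is $\norm{\Omega(S,s)}$ when $t\simeq s$ and $0$ otherwise, and this matches $\norm f(\norm p)$ for $p=\name s$, $f=h^t$; after renormalising to $\delta_s=\norm{\name s}$ and $\delta^t=\norm{h^t}/\norm{\Omega(S,t)}$ this becomes the Kronecker pairing $(\delta_s,\delta^t)\mapsto[s\simeq t]$, as promised in \ref{duality}.
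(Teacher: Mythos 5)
Your proposal is correct, but it takes a more general route than the paper's own proof. The paper's argument is purely a basis computation: it evaluates the $\infty$-groupoid pairing on $(\name s, h^t)$, obtaining $\Omega(S,s)$ when $t\simeq s$ and $\emptyset$ otherwise, recalls that $\norm{\name s}=\delta_s$ and $\norm{h^t}=\norm{\Omega(S,t)}\,\delta^t$, and observes that after this normalisation the cardinality of the pairing is exactly the Kronecker pairing $(\delta_s,\delta^t)\mapsto [s\simeq t]$ --- which is precisely the final paragraph of your argument. What you add is the stronger, pointwise statement that $\norm{f(p)}=\norm f(\norm p)$ for \emph{every} pair $(p,f)$, deduced from functoriality of meta cardinality (Proposition~\ref{cardfunctor}) applied to the composite span $1\leftarrow \alpha\times_S F\to 1$, together with the explicit check via Lemma~\ref{finite-products} that the factors $\norm{S_{[s]}}$ match on both sides. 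This buys a cleaner conceptual formulation (cardinality intertwines the two evaluation pairings on all of $\grpd_{/S}\times\grpd^S$, not merely on bases), at the cost of some redundancy: the fibre-product computation you single out as ``worth recording'' is exactly the composition-of-spans computation already carried out inside the proof of Proposition~\ref{cardfunctor}, so either the appeal to functoriality or the direct calculation alone would suffice. Both arguments are sound; the paper's is the minimal one needed for the statement as phrased, since bilinearity reduces everything to the bases anyway.
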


\begin{proof}
  We take cardinality of the perfect pairing
\begin{eqnarray*}
    \grpd_{/S} \times \grpd^S  &\longrightarrow& \grpd \\
    (p , f) &\longmapsto& f(p) \\
    (\name s, h^t) &\longmapsto& \begin{cases}
    \Omega(S,s) & \text{ if } t\simeq s \\
    0 & \text{ else .}
\end{cases}
\end{eqnarray*}
Since the cardinality of $\name s$ is $\delta_s$, while the 
cardinality of $h^t$ is $\norm{\Omega(S,t)}\delta^t$, 
the cardinality of the pairing becomes
$$
(\delta_s, \norm{ \Omega(S,t)}\delta^t) \longmapsto 
\begin{cases}
    \norm{\Omega(S,t)} & \text{ if } t \simeq s \\
    0 & \text{ else },
\end{cases}
$$
or equivalently:
$$
(\delta_s, \delta^t) \longmapsto 
\begin{cases}
    1 & \text{ if } t \simeq s \\
    0 & \text{ else },
\end{cases}
$$
as required.
\end{proof}

\begin{blanko}{Remarks.}
  The definition of meta cardinality involves a convention, namely for a span 
  $S \leftarrow M \to T$ to include
  the factor $\norm{T_{[t]}}$.  In fact, as observed by
  Baez--Hoffnung--Walker~\cite{Baez-Hoffnung-Walker:0908.4305}, other
  conventions are possible: for any real numbers
  $\alpha_1$ and $\alpha_2$
  with $\alpha_1+\alpha_2=1$, it is possible to use the factor
  $$
  \norm{S_{[s]}}^{\alpha_1} \norm{T_{[t]}}^{\alpha_2} .
  $$
  They choose to use $0+1$ in some cases and $1+0$ in 
  other cases, according to what seems more practical.  We think
  that these choices can be explained by the side of duality on which the
  constructions take place.

Our convention with the $\norm{T_{[t]}}$ normalisation yields the
`correct' numbers in all the applications of the theory that motivated
us, as exemplified below.
\end{blanko}

\begin{blanko}{Incidence coalgebras and incidence algebras of decomposition spaces.}
  A main motivation for us is the theory of decomposition spaces 
  \cite{GKT:DSIAMI-1}, \cite{GKT:DSIAMI-2}, \cite{GKT:MI}.
  A {\em decomposition space} is a simplicial $\infty$-groupoid 
  $X: \simplexcategory\op\to\Grpd$ satisfying
  an exactness condition precisely so as to make the following comultiplication
  law coassociative, up to coherent homotopy.
  The natural span
  $$
\xymatrix{
 X_1  & \ar[l]_{d_1}  X_2\ar[r]^-{(d_2,d_0)} &  X_1\times X_1 
}
$$
defines a linear functor, the {\em comultiplication}
\begin{eqnarray*}
  \Delta : \Grpd_{/ X_1} & \longrightarrow & 
  \Grpd_{/( X_1\times X_1)}  \\
  (S\stackrel s\to X_1) & \longmapsto & (d_2,d_0) \lowershriek \circ d_1 \upperstar(s) .
\end{eqnarray*}
(and similarly the span $X_1 \stackrel{s_0}\leftarrow X_0 \to 1$ defines the counit).
This is called the incidence coalgebra of $X$.  If the maps
$X_0 \stackrel{s_0}\longrightarrow X_1 \stackrel{d_1}\longleftarrow X_2$
are both finite 
and $X_1$ is locally finite 
then this coalgebra structure 
restricts to a coalgebra structure on $\grpd_{/X_1}$, which in turn
descends to
$\Q_{\pi_0 X_1}$ under taking cardinality \cite{GKT:DSIAMI-2}.

An example is given by the fat nerve of $\mathbf V$, the category of
finite-dimensional vector spaces over a finite field and linear injections.
D\"ur~\cite{Dur:1986} obtained the $q$-binomial coalgebra from this
example by a reduction
step, identifying two linear injections if their cokernels have the same
dimension.  The coalgebra can also be obtained directly from a decomposition
space, namely the Waldhausen $S$-construction on $\mathbf V$.
We check in \cite{GKT:ex} that the cardinality of this comultiplication
gives precisely the classical Hall numbers (with the present convention).
\end{blanko}

\begin{blanko}{Zeta functions.}
  For $X$ a decomposition space with $X_0 \stackrel{s_0}\longrightarrow X_1
\stackrel{d_1}\longleftarrow X_2$ both finite maps, the dual space of $\grpd_{/X_1}$
is $\grpd^{X_1}$, underlying the incidence {\em algebra}.  Its multiplication
is given by a convolution formula.
In here there is a canonical element, the
`constant' linear functor given by the span $X_1 \stackrel=\leftarrow X_1 \to 1$
(corresponding to the terminal presheaf), which is called the {\em zeta functor}
\cite{GKT:DSIAMI-2}.  By \ref{prop:card-prsh-ptwise}, the cardinality of the
terminal presheaf is the constant function $1$.  Hence the cardinality of the
zeta  functor is the classical zeta function in incidence algebras.
\end{blanko}

\begin{blanko}{Green functions.}
  The zeta function is the `sum of everything', with no symmetry factors.  A `sum
of everything', but {\em with} symmetry factors, appeared in our work
\cite{GalvezCarrillo-Kock-Tonks:1207.6404} on the Fa\`a di Bruno and Connes--Kreimer
bialgebras, namely in the form of combinatorial Green functions (see also
\cite{Kock:1512.03027}).

The coalgebra in question is then the completion of the finite incidence algebra
$\Grpd_{/X_1}^{\relfin}$, where $X_1$ is the groupoid of forests (or
more precisely, $P$-forests for $P$ a polynomial 
functor~\cite{Gambino-Kock:0906.4931}, \cite{Kock:0807}).
Of course we know that $\Grpd_{/X_1}^{\relfin}$ is canonically
equivalent to $\grpd^{X_1}$, but it is important here to keep track of
which side of duality we are on.  The Green function, which is in reality a 
distribution rather than a function,
lives on the coalgebra side, and more precisely in the completion.
(The fact that the comultiplication extends to the completion
is due to the fact that not only $d_1: X_2 \to X_1$ is finite,
but that also $X_2 \to X_1\times X_1$ is finite (a feature common
to all Segal $1$-groupoids
with $X_0$ locally finite).)

Our Green function, shown to satisfy the
Fa\`a de Bruno formula in $\Grpd_{/X_1}^{\relfin}$, is
$T \to X_1$, the full inclusion of the groupoid of $P$-trees $T$
into the groupoid of $P$-forests.
Upon taking cardinality, with the present conventions, we obtain
precisely the series
$$
G = \sum_{t\in \pi_0 T} \frac{\delta_t}{\norm{\Aut(t)}} ,
$$
the sum of all trees weighted by symmetry factors,
which is the usual combinatorial Green function in Quantum Field 
Theory, modulo the difference between trees and graphs~\cite{Kock:1512.03027}.
The important symmetry factors appear correctly because we are on the coalgebra
side of the duality.
\end{blanko}

\end{document}